\begin{document}
\parskip=6pt

\theoremstyle{plain}
\newtheorem{prop}{Proposition}
\newtheorem{lem}[prop]{Lemma}
\newtheorem{thm}[prop]{Theorem}
\newtheorem{cor}[prop]{Corollary}
\newtheorem{defn}[prop]{Definition}
\theoremstyle{definition}
\newtheorem{example}[prop]{Example}
\theoremstyle{remark}
\newtheorem{remark}[prop]{Remark}
\numberwithin{prop}{section}
\numberwithin{equation}{section}

\newenvironment{rcases}
  {\left.\begin{aligned}}
  {\end{aligned}\right\rbrace}


\def\cal{\mathcal}
\newcommand{\cF}{\cal F}
\newcommand{\cG}{\cal G}
\newcommand{\cA}{\cal A}
\newcommand{\cB}{\cal B}
\newcommand{\cC}{\cal C}
\newcommand{\cO}{{\cal O}}
\newcommand{\cE}{{\cal E}}
\newcommand{\cH}{{\cal H}}
\newcommand{\cU}{{\cal U}}
\newcommand{\cL}{{\cal L}}
\newcommand{\cM}{{\cal M}}
\newcommand{\cD}{{\cal D}}
\newcommand{\cK}{{\cal K}}
\newcommand{\cZ}{{\cal Z}}
\newcommand{\cQ}{{\cal Q}}

\newcommand{\fQ}{\frak{Q}}

\newcommand{\bC}{\mathbb C}
\newcommand{\bP}{\mathbb P}
\newcommand{\bN}{\mathbb N}
\newcommand{\bA}{\mathbb A}
\newcommand{\bR}{\mathbb R}
\newcommand{\oL}{\overline L}
\newcommand{\oP}{\overline P}
\newcommand{\op}{\overline \partial}
\newcommand{\oQ}{\overline Q}
\newcommand{\oR}{\overline R}
\newcommand{\oS}{\overline S}
\newcommand{\oc}{\overline c}
\newcommand{\bp}{\mathbb p}
\newcommand{\oD}{\overline D}
\newcommand{\oE}{\overline E}
\newcommand{\oC}{\overline C}
\newcommand{\of}{\overline f}
\newcommand{\ou}{\overline u}
\newcommand{\oU}{\overline U}
\newcommand{\ow}{\overline w}
\newcommand{\oy}{\overline y}
\newcommand{\oz}{\overline z}
\newcommand{\oxi}{\overline \xi}

\newcommand{\hg}{\hat G}
\newcommand{\hM}{\hat M}

\newcommand{\tpr}{\widetilde {\text{pr}}}
\newcommand{\tB}{\widetilde B}
\newcommand{\tx}{\widetilde x}
\newcommand{\ty}{\widetilde y}
\newcommand{\txi}{\widetilde \xi}
\newcommand{\teta}{\widetilde \eta}
\newcommand{\tna}{\widetilde \nabla}
\newcommand{\tth}{\widetilde \theta}

\newcommand{\diml}{\text{dim}}
\newcommand{\var}{\varepsilon}
\newcommand{\End}{\text{End }}
\newcommand{\loc}{\text{loc}}
\newcommand{\Symp}{\text{Symp}}
\newcommand{\Sympo}{\text{Symp}(\omega)}
\newcommand{\lam}{\lambda}
\newcommand{\na}{\nabla}
\newcommand{\Hom}{\text{Hom}}
\newcommand{\Ham}{\text{Ham}}
\newcommand{\Ker}{\text{Ker}}
\newcommand{\dist}{\text{dist}}
\newcommand{\psl}{\rm{PSL}}
\newcommand{\rk}{\roman{rk }}

\newcommand{\psho}{\text{PSH}(\omega)}
\newcommand{\pshO}{\text{PSH}(\Omega)}

\renewcommand\qed{ }
\begin{titlepage}
\title{\bf The principle of least action in the space of K\"ahler potentials}
\author{L\'aszl\'o Lempert \thanks{Research partially  supported by NSF grant DMS 1764167\newline 2020 Mathematics Subject classification 32Q15, 32U15, 53C35, 58B20, 58E30, 70H99}\\ Department of  Mathematics\\
Purdue University\\West Lafayette, IN
47907-2067, USA}
\thispagestyle{empty}

\end{titlepage}
\date{}
\maketitle
\abstract
Given a compact K\"ahler manifold, the space $\cH$ of its (relative) K\"ahler potentials is an infinite dimensional Fr\'echet manifold, on which Mabuchi and Semmes have introduced a natural connection $\nabla$. We study certain Lagrangians on 
$T\cH$, in particular Finsler metrics, that are parallel with respect to the connection. We show that geodesics of 
$\nabla$ are paths of least action, and prove a certain convexity property of the least action. This generalizes earlier results 
of Calabi, Chen, and Darvas.
\endabstract

\section{Introduction}

Let $(X,\omega)$ be an $n$ dimensional, connected, compact K\"ahler manifold and
\[\cH=\cH_\omega=\{u\in C^\infty(X): \omega +dd^cu=\omega_u>0\}\]
its space of relative K\"ahler potentials. Here $C^\infty(X)$ refers to the Fr\'echet space of real valued smooth functions on $X$, and $d^c=i(\op-\partial)/2$, so that $dd^c=i\partial\op$. The space $\cH$, as an open subset of a Fr\'echet space, inherits a F\'echet manifold structure, whose tangent bundle has a canonical trivialization $T\cH\approx \cH\times C^\infty(X)$. Mabuchi and Semmes \cite{M,S} independently and with different motivations have introduced a torsion free connection $\na$ on $T\cH$. Mabuchi, as a tool to study special K\"ahler metrics, defined a Riemannian metric on $\cH$ and obtained $\na$ as the Levi--Civita  connection of the metric. Somewhat later Semmes found the connection in search 
for a geometric interpretation of interpolation of Banach spaces and of a certain homogeneous complex Monge--Amp\`ere equation associated with interpolation. He also determined all Riemannian metrics compatible with the connection: they are linear combinations of Mabuchi's metric and the square of a one form.

One way to explain $\na$ is through its parallel transport. We will use dot $\,\dot{ }\,$ to denote derivative of a function  of 
one real variable, and grad$_v$ to refer to gradient of a function $X\to\bR$ with respect to the K\"ahler metric of 
$\omega_v$. Let $u:[a,b]\to \cH$ be a smooth path. By integrating the time dependent vector field 
$(-1/2)$ grad$_{u(t)}\dot u(t)$ on $X$ we obtain a smooth family of diffeomorphisms $\varphi(t): X \to X$.  In fact 
$\varphi(t):(X, \omega_{u(0)})\to (X,\omega_{u(t)})$ is symplectomorphic. The parallel translate of 
$\xi\in T_{u(t)} \cH\approx C^\infty(X)$ to $u(0)$ along the path $u$ is then
\begin{equation}  
\xi\circ\varphi(t)\in C^\infty(X)\approx T_{u(0)} \cH.
\end{equation}

Understanding the geodesics of this connection was already marked in \cite{M,S} as an interesting and potentially important problem, and Donaldson's subsequent work \cite{Do} gave further impetus to study them. By now the boundary value problem for geodesics is well understood. On the one hand Darvas, Hu, Vivas, and myself \cite{D1, DL, Hu, LV} proved that points in $\cH$ cannot always be connected by a geodesic, not even if they are close to each other. On the other hand work by Berman--Demailly, Berndtsson, B\l ocki, Chen, Chen--Feldman--Hu, Chu--Tosatti--Weinkove, Darvas, and He \cite{BD, Be1, Bl1, Bl2, C, CFH, CTW, D2, D3, He} gave that the geodesic equation extends to various enlargements of $\cH$, and in these enlargements any pair of points, or at least nearby points, can be connected by solutions of the extended geodesic equation, weak geodesics. It follows from Chen's work that in those enlargements to which Mabuchi's metric extends, weak geodesics minimize length. In \cite{D2} Darvas generalized Mabuchi's metric to certain Orlicz type Finsler metrics on $\cH$, determined the metric completions of $\cH$ under these metrics, and again found that weak geodesics in these completions minimize length. In a slight overstatement the length minimizing paths are independent of which of Darvas's metric we use to compute length. This was surprising at first sight.

But in fact in geometry one encounters other similar phenomena. In a normed vector space straight line segments minimize length no matter what norm is chosen. There is also the analogy between $\cH$ and the space $\cQ$ of positive definite quadratic forms on $\bR^k$. $\cQ$ has a natural torsion free connection that turns it in a symmetric space $\approx\text{GL}^+_k(\bR)/\text{SO}_k(\bR)$; and for all parallel Finsler metrics---i.e. those that are invariant under 
$\text{GL}^+_k(\bR)$---the shortest paths are the same: subarcs of left translates of certain one parameter subgroups in 
$\text{GL}^+_k(\bR)$, projected to $\text{GL}^+_k(\bR)/\text{SO}_k(\bR)$. 

Now $\cH$ with Mabuchi's connection is itself a symmetric space \cite{Do, M, S}, at least according some definitions of a symmetric space (while it is not according to some others, \cite{L3}). Although there is no group acting transitively on\footnote {This follows from \cite{L2}. Even though Theorem 1.2 there is formulated for isometries of Mabuchi's metric, the proof, verbatim, gives that if $\omega_u$ is analytic while $\omega_v$ is not, then no diffeomorphism of $\cH$ can preserve $\na$ and map $u$ to $v$.} $(\cH,\na)$, the holonomy groupoid $\Gamma$ of $(\cH,\na)$ acts on $T\cH$. Thus $\Gamma=\bigcup_{u, v\in \cH}\Gamma_{uv}$, where $\Gamma_{uv}$ consists of linear isomorphisms $T_u\cH\to T_v \cH$ that arise as parallel transport along piecewise smooth paths from $u$ to $v$. Concatenation of parallel transports defines an operation $\Gamma_{uv}\times\Gamma_{vw}\to \Gamma_{uw}$ that turns $\Gamma$ in a grupoid. That a Finsler metric or a function $L:T\cH \to \bR$ is parallel means it is invariant under $\Gamma$.

Darvas's metrics are parallel. They are defined in terms of integrals
\begin{equation}  
\int_X\chi(\xi)\omega_u^n,\quad u\in \cH, \quad \xi\in T_u \cH\approx C^\infty(X),
\end{equation}
with a fixed ``Young weight'' $\chi:\bR\to [0,\infty]$, and are invariant under parallel transport simply because in the formula (1.1) for parallel transport, $\varphi(t)$ satisfies $\varphi(t)^*\omega_{u(t)}=\omega_{u(0)}$. But there are many parallel Finsler metrics on $T\cH$ beyond those considered in \cite{D2}. The simplest is, for given $0<\alpha<1$,
\[
p(\xi)=\sup\Big\{\int_E |\xi| \omega^n_u\, \Big / \, \Big(\int_E \omega^n_u\Big)^\alpha: E\subset X \text{ is measurable}\Big\},
\]
$\xi\in T_u \cH$. This is known as weak $L^q$-norm or $L^{q,\infty}$ Lorentz norm, $q=1/\alpha$.

Our thesis is that the proper generality of Darvas's results on his metrics is parallel, or holonomy invariant, Finsler metrics and more generally, fiberwise convex functions $T\cH\to\bR$, ``Lagrangians''. In this paper and in a sequel we will show 
that many of his results generalize to this framework. Most of the time we will consider Lagrangians on $T\cH$ that extend to the space $B(X)\cap\psho$ of bounded $\omega$--plurisubharmonic functions. Here we denote by $B(X)$ the 
Banach space of bounded Borel 
functions $\xi:X\to\bR$ with the norm $||\xi||=\sup|\xi|$; the Lagrangians of interest extend to 
$(B(X)\cap\psho)\times B(X)$. (The more common space $L^\infty(X)$ is a quotient of $B(X)$, but we have little 
use for it in this paper.)
A generalization of holonomy invariance can be defined for such functions. Our results pertain to invariant Lagrangians that are convex in the $B(X)$ variable and have a certain continuity property, that we call strong continuity (Definition 3.1).

\begin{thm}  
[=Theorem 8.1, Principle of least action]
If $v:[0,T]\to B(X)\cap \psho$ is a weak geodesic, and $C^1$ as a map into the Banach space $B(X)$, then it minimizes action
\[\int^T_0 L(\dot u(t)) dt\]
among all piecewise $C^1$ paths $u:[0,T] \to B(X)\cap \psho$ with $u(0)=v(0)$, $u(T)=v(T)$.
\end{thm}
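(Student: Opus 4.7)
The plan is to reduce the theorem to a scalar convex inequality in the fixed linear space $T_{v(0)}\cH\cong B(X)$ by combining the holonomy-invariance of $L$ (via parallel transport) with convex duality and the torsion-freeness of $\nabla$.

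For a piecewise $C^1$ competitor $u:[0,T]\to B(X)\cap\psho$ with $u(0)=v(0)$ and $u(T)=v(T)$, let $P^u_t:T_{u(t)}\cH\to T_{v(0)}\cH$ denote parallel transport along $u$, and form the development
\[
\widetilde u(t):=\int_0^t P^u_s\dot u(s)\,ds\in B(X).
\]
Holonomy-invariance of $L$ gives $L(\dot u(t))=L(\dot{\widetilde u}(t))$, so the action of $u$ equals $\int_0^T L(\dot{\widetilde u}(t))\,dt$. Since $\dot v$ is parallel along $v$, $\widetilde v(t)=t\dot v(0)$, and the action of $v$ equals $TL(\dot v(0))$. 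Pick a subgradient $\ell^*\in\partial L(\dot v(0))$, so that $L(\xi)\geq\ell^*(\xi)-L^*(\ell^*)$ for all $\xi\in B(X)$ with equality at $\xi=\dot v(0)$; integrating along the development gives
\[
\int_0^T L(\dot u(t))\,dt\;\geq\;\ell^*(\widetilde u(T))-T\,L^*(\ell^*).
\]

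The theorem then reduces to the identity $\ell^*(\widetilde u(T))=T\ell^*(\dot v(0))$. The pairing $u\mapsto\ell^*(\widetilde u(T))$ is the line integral of the 1-form $\alpha$ on $\cH$ obtained by parallel-transporting $\ell^*$ from $v(0)$, and the crucial structural fact is that for a torsion-free connection, parallel 1-forms are closed: $d\alpha$ equals the antisymmetrization of $\nabla\alpha$ when the torsion vanishes, so $\nabla\alpha=0$ forces $d\alpha=0$. Hence the line integral is path-independent, so it equals its value along $v$, namely $\ell^*(\widetilde v(T))=T\ell^*(\dot v(0))$; combined with the duality equality at $\dot v(0)$, this yields action of $u\geq T\,L(\dot v(0))=$ action of $v$.

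Two obstacles remain. First, the 1-form $\alpha$ is globally well-defined only if $\ell^*$ is invariant under the holonomy isotropy group $\Gamma_{v(0),v(0)}\subset\text{GL}(T_{v(0)}\cH)$. Since $L$ is parallel, this isotropy acts on the convex set $\partial L(\dot v(0))$ equivariantly, and an invariant subgradient must be selected by a convex averaging or fixed-point argument inside $\partial L(\dot v(0))$---this is the most delicate structural point of the proof, and if no fully invariant $\ell^*$ exists, one must refine the duality step (for example by working only along the specific paths $u$ and $v$, using that the loop $u\cdot v^{-1}$ produces a holonomy whose action on $\ell^*$ can be controlled via strong continuity). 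Second, the weak-regularity setting, in which $v\in B(X)\cap\psho$ need not be smooth, requires extending parallel transport (1.1) beyond $\cH$; the remedy is to approximate $u$ and $v$ by smooth paths in $\cH$ (using regularization techniques from weak-geodesic theory based on the homogeneous Monge--Amp\`ere equation), apply the argument above to the approximants, and pass to the limit using the strong continuity of $L$, which is precisely the hypothesis designed to legitimize this limiting procedure.
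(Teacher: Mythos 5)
Your reduction to the identity $\ell^*(\widetilde u(T))=T\,\ell^*(\dot v(0))$ is where the argument breaks, and the obstacle you flag as ``delicate'' is in fact fatal. For the parallel-transported functional $\alpha$ to be a well-defined $1$-form on $\cH$ (so that torsion-freeness can give $d\alpha=0$ and path-independence), $\ell^*$ must be fixed by the full holonomy group $\Gamma_{v(0)v(0)}$. By Lemma 2.3 and Theorem 2.2 of the paper this group acts (densely) by measure-preserving rearrangements of $(X,\mu_{v(0)})$, and the only continuous linear functionals invariant under all rearrangements are the multiples of $\xi\mapsto\int_X\xi\,\omega^n_{v(0)}$ (equal-measure sets have equidistributed indicators, so additivity forces $\ell(\mathbf{1}_E)=c\,\mu_{v(0)}(E)$). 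Such a functional lies in $\partial L(\dot v(0))$ only for degenerate $L$, essentially the Monge--Amp\`ere energy; for Darvas's $L^1$ metric $L(\xi)=\int_X|\xi|\,\omega^n_u$, say, the subgradients at a sign-changing $\dot v(0)$ pair $\xi$ against $\mathrm{sgn}\,\dot v(0)$, which is not rearrangement-invariant. Averaging cannot rescue this: the holonomy group is a non-compact infinite-dimensional group with no invariant mean adapted to this action, and even formally an element $g\in\Gamma_{v(0)v(0)}$ maps $\partial L(\dot v(0))$ to $\partial L(g\,\dot v(0))$ rather than to itself, so an average of $\ell^*$ over the group need not be a subgradient at $\dot v(0)$ at all. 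Without an invariant $\ell^*$ the pairing $u\mapsto\ell^*(\widetilde u(T))$ is genuinely path-dependent---for a closed loop the development $\widetilde u(T)$ is the translation part of the affine holonomy, which does not vanish in a curved space---and the chain of inequalities does not close.

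The deeper warning sign is that your argument never uses curvature, whereas the least-action principle for all parallel convex Lagrangians is a seminegative-curvature phenomenon. The paper's proof runs through exactly this point: Theorem 6.1 shows $L$ is convex along $\varepsilon$-Jacobi fields (a second-variation computation combined with the representation of $L$ as a supremum of rearrangement-invariant families of affine functionals, Theorem 2.4, and the positivity Lemma 6.3 for similarly ordered functions); Lemma 7.2 converts this convexity, via Jensen's inequality, into a triangle inequality for triangles with two $\varepsilon$-geodesic sides; letting $\varepsilon\to 0$ gives the principle in $\cH^{1\bar 1}$ (Theorem 7.1, Lemma 7.3, Corollary 7.4); and only then does the approximation scheme you sketch in your second obstacle (Lemmas 8.2--8.4, using strong continuity, quasicontinuity and convergence in capacity) carry the result to $B(X)\cap\psho$. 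Your final paragraph correctly anticipates that last step, and your computation of the action of $v$ as $TL(\dot v(0))$ matches Lemmas 7.3 and 8.4 (though there it is derived from equidistribution of $\dot v(t)$ rather than from parallel transport, which is not defined along weak geodesics outside $\cH$); but the core inequality that the limiting procedure is meant to transport has not been established.
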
   

Weak geodesics may fail to be $C^1$ (Example 5.4), but from Chen's work \cite{C} we do know that a weak geodesic with endpoints in $\cH$ is $C^1$. The theorem can be proved for weak geodesics rather less regular than
$C^1$, but even this relaxed regularity is not automatic.

The next result is about how least action varies as one moves along weak geodesics; it is a manifestation of seminegative curvature. Fix $T>0$. If $w,w'\in B(X)\cap\psho$, the least action $\cL_T(w,w')$ between them is the 
infimum of the actions $\int^T_0 L(\dot u(t)) dt$ over all piecewise $C^1$ paths $u:[0,T]\to B(X)\cap \psho$ connecting 
$w$ with $w'$.
It is not obvious, but by Lemma 9.4, $\cL_T(w,w')$ is finite.

\begin{thm}   
[=Theorem 9.1]  If $u,v:[a,b]\to B(X)\cap\psho$ are weak geodesics, 
then the function $\cL_T(u,v):[a,b]\to\bR$ is convex.
\end{thm}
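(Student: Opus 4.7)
The plan is to reduce the convexity of $\cL_T(u(\cdot), v(\cdot))$ to the chord inequality
\[\cL_T(u(\sigma), v(\sigma)) \leq (1-\theta)\,\cL_T(u(s_0), v(s_0)) + \theta\,\cL_T(u(s_1), v(s_1))\]
for arbitrary $s_0 < s_1$ in $[a,b]$, $\theta \in (0,1)$, and $\sigma = (1-\theta)s_0 + \theta s_1$. The strategy is the one familiar from spaces of seminegative curvature: interpolate nearly optimal paths from $u(s_0)$ to $v(s_0)$ and from $u(s_1)$ to $v(s_1)$ by a one-parameter family of weak geodesics, producing a candidate path from $u(\sigma)$ to $v(\sigma)$ whose action we control using the parallel, fiberwise convex nature of $L$.

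Fix $\var>0$ and choose piecewise $C^1$ paths $\alpha, \beta : [0,T] \to B(X) \cap \psho$ connecting $u(s_0)$ to $v(s_0)$ and $u(s_1)$ to $v(s_1)$ respectively, with actions within $\var$ of the corresponding infima. For each $t \in [0,T]$, let $\gamma_t : [s_0, s_1] \to B(X) \cap \psho$ be the weak geodesic from $\alpha(t)$ to $\beta(t)$, whose existence is provided by the results recalled in the introduction, and set $\tilde u(t) = \gamma_t(\sigma)$. By uniqueness of weak geodesics in $B(X) \cap \psho$ (a comparison principle for the extended Monge--Amp\`ere equation) the restrictions $u|_{[s_0, s_1]}$ and $v|_{[s_0, s_1]}$ agree with $\gamma_0$ and $\gamma_T$, so $\tilde u(0) = u(\sigma)$ and $\tilde u(T) = v(\sigma)$, and $\tilde u$ is indeed admissible for $\cL_T(u(\sigma), v(\sigma))$.

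The heart of the argument is the pointwise estimate
\[L(\dot{\tilde u}(t)) \leq (1-\theta)\,L(\dot\alpha(t)) + \theta\,L(\dot\beta(t)), \qquad t \in [0,T].\]
This is where seminegative curvature of $(\cH, \na)$ enters. Viewing $s \mapsto \gamma_t(s)$ as a one-parameter family of geodesics indexed by $t$, the variation field $J(s) = \partial_t \gamma_t(s)$ is a Jacobi field along $\gamma_t$ with $J(s_0) = \dot\alpha(t)$ and $J(s_1) = \dot\beta(t)$, and $\dot{\tilde u}(t) = J(\sigma)$. Parallel translating these boundary values to $T_{\tilde u(t)}\cH$ along $\gamma_t$ produces vectors $w_0, w_1$ for which holonomy invariance of $L$ gives $L(w_i) = L(J(s_i))$. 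Seminegative curvature of $\cH$---made concrete through the symplectomorphism formula (1.1) for parallel transport and through the subsolution characterization of weak geodesics---should yield that $J(\sigma)$ differs from the linear combination $(1-\theta) w_0 + \theta w_1$ by a correction whose presence only decreases any parallel, fiberwise convex $L$. Combined with convexity of $L$ in the fiber, this delivers the pointwise inequality.

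Integrating in $t$, and using strong continuity of $L$ together with enough regularity of $\tilde u$ to make sense of its action (furnished by continuous dependence of weak geodesics on their endpoints along the $C^1$ paths $\alpha, \beta$), we conclude
\[\cL_T(u(\sigma), v(\sigma)) \leq \int_0^T L(\dot{\tilde u}(t))\,dt \leq (1-\theta)\,\cL_T(u(s_0), v(s_0)) + \theta\,\cL_T(u(s_1), v(s_1)) + \var,\]
and letting $\var \to 0$ finishes the proof. The two main obstacles are: (i) securing sufficient smoothness of $\tilde u$ in $t$ for the action to be defined and to depend continuously on $\alpha, \beta$, which calls on fine regularity theory for families of weak geodesics; and (ii) establishing the pointwise seminegative-curvature inequality for an arbitrary parallel convex Lagrangian, a substantial generalization of the Riemannian seminegative curvature of Mabuchi's metric that underlies the analogous convexity statement for Chen's geodesic distance.
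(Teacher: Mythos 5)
Your overall geometry is the right one and in fact mirrors the paper's: both arguments run a one--parameter family of geodesics between two connecting paths, observe that $L$ composed with the resulting variation field is convex in the geodesic parameter, and integrate in the path parameter. But the two items you defer to the end as ``obstacles'' are not loose ends---they are the entire content of the proof, and as stated your plan for them would not go through. Your key pointwise inequality $L(\dot{\tilde u}(t))\le(1-\theta)L(\dot\alpha(t))+\theta L(\dot\beta(t))$ is exactly the assertion that $L\circ J$ is convex along the geodesic $\gamma_t$, where $J=\partial_t\gamma_t$. For weak geodesics in $B(X)\cap\psho$ this variation field is not even known to exist: weak geodesics depend on their endpoints only in a Lipschitz way, there is no Jacobi equation for them, and no ``correction term'' computation is available. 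The paper proves convexity of $L$ along variation fields only for $\varepsilon$--Jacobi fields along $\varepsilon$--geodesics in $\cH$ (Theorem 6.1), where Chen's theorem supplies a genuinely smooth two--parameter family and the Jacobi equation (6.2); even there the proof is nontrivial, resting on the representation of $L$ as a supremum of affine functionals (Theorem 2.4), a maximizer chosen similarly ordered with the Jacobi field, and a positivity argument for currents (Lemma 6.3). Appealing to ``seminegative curvature'' for an arbitrary invariant convex Lagrangian does not substitute for this.

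The paper's route around both obstacles is to prove your chord inequality first entirely inside $\cH$, for $\varepsilon$--geodesics (Lemma 9.2): the two--parameter family $U(s,t)$ is chosen with each $U(s,\cdot)$ an $\varepsilon$--geodesic and with the boundary connecting paths $U(\cdot,\alpha)$, $U(\cdot,\beta)$ taken to be $\delta$--geodesics rather than your arbitrary near--optimal paths; Theorem 6.1 gives the pointwise convexity, and letting $\delta\to0$ identifies the boundary actions with the least actions via the already established principle of least action (Theorem 8.1). Passing from this to Theorem 9.1 then requires two further approximation steps your sketch does not supply: convergence $\cL_S(u^\varepsilon,v^\varepsilon)\to\cL_S(u,v)$ as the $\varepsilon$--geodesics converge uniformly to the weak geodesics, and stability of $\cL_S$ under decreasing approximation of the endpoints from $\cH$ (Lemma 9.3, which also establishes the finiteness of $\cL_T$ that you use implicitly throughout, together with Lemma 8.3 on decreasing limits of weak geodesics). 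So the skeleton of your argument is sound, but it must be executed at the level of $\varepsilon$--geodesics and transported to $B(X)\cap\psho$ by limits; attempted directly on weak geodesics it has no foundation.
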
   

We also prove a converse of sorts to Theorem 1.1: under certain  conditions, {\sl only} weak geodesics minimize
action, see Theorem 11.1. When $L$ defines Mabuchi's metric, Darvas already proved this, even for paths
more general than what our theorem covers \cite[Theorem 1]{D3}.

The tools of this paper are Chen's work on $\var$--geodesics, rudiments of Guedj--Zeriahi's pluripotential theory, and our results on invariant convex functions on $C^\infty(X)$ \cite{C, GZ1, GZ2, L4}. In
the proof of Theorem 1.1, even if the details are different, overall we will 
be able to follow the strategy of Calabi, Chen, and Darvas \cite{C, CC, D2, D3}. Once basic properties of our Lagrangians are established, the greater generality occasionally results in
less computation in the proofs for the following reason. Say, for a holonomy invariant Finsler metric $p:T\cH\to [0,\infty)$, there is a family $\cF\subset T\cH\approx \cH\times C^\infty(X)$ such that 
\begin{equation}   
p(\xi)= \sup\Big\{\int_X f\xi\omega^n_u : f\in \cF\cap T_u \cH\Big\},\quad \xi\in T_u\cH
\end{equation}
(see Theorem 2.4), and the integrals in (1.3), linear in $\xi$, can be easier to manipulate than the nonlinear integrals in (1.2).

It appears that the greatest generality in which action can be defined by an integral is the space of bounded  
$\omega$--plurisubharmonic functions. Nonetheless, action can be defined for any path in $\psho$ as a limit of integrals. Whether this action is finite or $\pm\infty$ of course depends on the path and on 
the Lagrangian. We plan to address this and 
related questions in a sequel to this paper.

Lagrangians even beyond Finsler metrics are not new to the subject. Chen's $\var$--geodesics are trajectories of a Lagrangian $L:T\cH\to\bR$ (albeit not holonomy invariant), with kinetic energy term the square of Mabuchi's metric and
potential energy a multiple of $V(u)=-\int_X u\omega^n$. Functions on $\cH$ that its geometry motivates, and that are used in existence problems in K\"ahler geometry, are also not new. Aubin's functional $I:\cH\to\bR$ \cite[p.146]{Au}, 
\[I(u)=\int_X u(\omega^n-\omega_u^n)\]
is a constant multiple of the total geodesic curvature of the line segment $[0,1]\ni t\mapsto tu\in \cH$, measured in Darvas's 
$L^1$ Finsler metric. Monge--Amp\`ere energy also arises from the geometry of $\cH$. It is a convex
function on $\cH$, for example in the sense that its restrictions to geodesics of $\na$ are convex; 
but its negative is also convex and, up to scaling and adding a constant, it is the only continuous function
that has this property.

We hope that a geometrical approach to functions on $\cH$ and on related spaces, in the spirit of this paper, will be of use in analytical problems on K\"ahler manifolds.

Contents. Section 2 is about basic properties of holonomy invariant convex Lagrangians $T\cH\to\bR$. Section 3 is about a subclass of Lagrangians that have an extra continuity property, which makes it possible to extend them to a larger vector bundle. Many of the results in these sections are direct consequences of results in \cite{L4}. Section 4 reviews the notion of weak and $\var$--geodesics, and $\var$--Jacobi fields. Section 5 introduces the action and formulates Theorems 1.1 and 1.2 in precise forms. It also gives a road map to their proofs, which occupy sections 6--9. Section 10 provides a
discretized formula for action, which suggests how to generalize to less regular paths, and section 11 addresses the
problem of uniqueness of paths that minimize action. 

In this paper we freely use basic notions of infinite dimensional analysis and geometry. There are many sources the reader can consult on these matters, one of them \cite{L1}, written with an eye on the space $\cH$ of K\"ahler potentials.

Acknowledgement. During the preparation of this paper I have profited from pluripotential theoretic discussions with 
Darvas and Guedj.

\section{Lagrangians}  

The central objects of this paper are continuous functions $L:T\cH\to\bR$ that are convex on each tangent space $T_u \cH$ and have a certain invariance property; as well as the associated action functional 
$\cL(u)=\int_a^b L(\dot{u}(t))dt\,\big(=\int^b_a L\circ\dot u$ for brevity). In this section and in the next we record basic facts about such functions which follow more or less directly from \cite{L4}, that dealt with the action on $C^\infty(X)$
of Hamiltonian diffeomorphisms of $(X,\omega)$ and with invariant convex functions on $C^\infty(X)$. 
As explained in the Introduction, for $L$ 
the invariance property in question is invariance under the holonomy grupoid $\Gamma$ of $(\cH,\na)$. Thus, if $\xi_1\in T_{u(1)} \cH$ is the parallel translate of $\xi_0\in T_{u(0)}\cH$ along a piecewise smooth path $u:[0,1]\to \cH$, then $L(\xi_0)= L(\xi_1)$. This property in fact implies a much stronger and more primitive notion of invariance.

\begin{defn}   
Given two measure spaces $(X,\mu)$ and $(Y,\nu)$, we say that measurable functions $\xi:X\to\bR$ and $\eta:Y\to\bR$ are equidistributed, or are strict rearrangements of each other, if $\mu(\xi^{-1}B)=\nu(\eta^{-1}B)$ for every Borel set $B\subset \bR$.
\end{defn}   

In finite measure spaces this is equivalent to requiring $\mu(\xi>t)=\nu(\eta>t)$ for all $t\in\bR$.

Back to our $n$ dimensional K\"ahler manifold $(X,\omega)$, if $u\in \cH$ we let $\mu_u$ denote the measure induced by $\omega^n_u$. Given measurable $\xi,\eta: X\to\bR$ we will write 
\begin{equation}    
(\xi,u)\sim(\eta,v) \text{ if } \xi,\eta \text{ are equidistributed as functions on }(X,\mu_u), (X,\mu_v).
\end{equation}
 When smooth $\xi,\eta$ are viewed as tangent vectors in $T_u \cH$, $T_v \cH$, we will just write $\xi\sim \eta$.

\begin{thm}     
A function $L:T\cH\to\bR$, continuous and convex on each fiber $T_u \cH$, is invariant under the holonomy gruppoid $\Gamma$ if and only if it is invariant under strict rearrangements: $L(\xi)=L(\eta)$ when $\xi\sim\eta$.
\end{thm}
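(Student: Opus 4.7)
One direction is essentially immediate from (1.1). If $\xi \in T_{u(t)}\cH$ parallel-transports back to $\xi \circ \varphi(t) \in T_{u(0)}\cH$ along a smooth path $u$, then the symplectomorphism property $\varphi(t)^*\mu_{u(t)} = \mu_{u(0)}$ gives $(\xi \circ \varphi(t), u(0)) \sim (\xi, u(t))$. Hence if $L$ is strict-rearrangement invariant its values at these two tangent vectors agree, which is precisely $\Gamma$-invariance.

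For the converse, my plan is to reduce a general equidistribution $(\xi,u) \sim (\eta,v)$ to equidistribution \emph{within} the single tangent space $T_u\cH$, where the corresponding theorem from \cite{L4} on $C^\infty(X)$ applies directly. First I would move everything into $T_u\cH$. Since $\cH$ is a convex open set in a Fr\'echet space there is a smooth path from $v$ to $u$; let $\widetilde\eta \in T_u\cH$ be the parallel transport of $\eta \in T_v\cH$ along it. By (1.1) (applied to the reversed path) one has $\widetilde\eta = \eta \circ \varphi_0$ for a symplectomorphism $\varphi_0: (X,\omega_u) \to (X,\omega_v)$, so $(\widetilde\eta, u) \sim (\eta, v)$. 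Combined with the hypothesis $(\xi,u) \sim (\eta,v)$ this gives $\widetilde\eta \sim \xi$ as vectors in $T_u\cH$. At the same time, $\Gamma$-invariance of $L$ gives $L(\widetilde\eta) = L(\eta)$, reducing the problem to the within-fiber equality $L(\widetilde\eta) = L(\xi)$.

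For that remaining step, the restriction $L_u := L|_{T_u\cH}$ is a continuous, convex function on $C^\infty(X) \cong T_u\cH$. I would argue it is invariant under the action $\chi \mapsto \chi \circ \psi$ of every $\psi \in \Ham(X,\omega_u)$. Indeed, any such $\psi$ arises as the time-one map $\varphi(1)$ of some \emph{loop} $u(t)$ based at $u$ via (1.1): given a Hamiltonian on $(X,\omega_u)$ whose time-one flow is $\psi$, a Moser-type construction produces such a loop whose associated vector field $(-1/2)\,\text{grad}_{u(t)} \dot u(t)$ equals the prescribed Hamiltonian field. Holonomy invariance of $L$ along this loop reads $L_u(\chi \circ \psi) = L_u(\chi)$. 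The main theorem of \cite{L4} then asserts that any continuous, convex, $\Ham(X,\omega_u)$-invariant function on $C^\infty(X)$ depends only on the $\mu_u$-distribution of its argument, giving $L_u(\widetilde\eta) = L_u(\xi)$ and completing the proof.

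The main obstacle I anticipate is the Moser-type identification of holonomy loops at $u$ with Hamiltonian diffeomorphisms of $(X,\omega_u)$: once that is in hand, the within-fiber equidistribution invariance is essentially a citation of \cite{L4}. The author's remark that the results of this section are direct consequences of \cite{L4} suggests that this bulk of the work has already been done there, with the parallel-transport formula (1.1) serving as the bridge between the group-action framework of \cite{L4} and the holonomy-groupoid framework used here.
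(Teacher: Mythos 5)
Your first direction and your reduction to a single fiber (parallel-translate $\eta$ into $T_u\cH$, note the translate is equidistributed with $\eta$, then invoke the within-fiber result of \cite{L4}) are exactly the paper's strategy, except that the paper translates everything to $T_0\cH$. The genuine problem is the step you yourself flag as the "main obstacle": the claim that every $\psi\in\Ham(X,\omega_u)$ is realized \emph{exactly} as the holonomy $\varphi(1)$ of a loop based at $u$, via a Moser-type construction making $(-1/2)\,\mathrm{grad}_{u(t)}\dot u(t)$ equal to a prescribed Hamiltonian field. This fails as stated: the vector field generating parallel transport is a \emph{metric} gradient $\mathrm{grad}_{u(t)}\dot u(t)$, which on a K\"ahler manifold is $J$ applied to the symplectic gradient; a metric gradient is generically not a Hamiltonian vector field, so you cannot simply prescribe it to be one. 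The fact that Hamiltonian diffeomorphisms are reachable at all comes from a commutator mechanism (Semmes, \cite[p.~512]{S}): composing parallel transports along suitable loops produces, infinitesimally, the symplectic gradients of Poisson brackets $\{\xi,\eta\}$, and one then needs the nontrivial fact (Atkin--Grabowski, \cite[Lemma 4.1]{S}) that every mean-zero smooth function is a finite sum of Poisson brackets.

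Moreover, even after the Lie-algebra statement is in hand, what one obtains is not that every Hamiltonian diffeomorphism lies in the holonomy group $G$ at $u$, but only that it lies in the \emph{closure} of $G$ in $\mathrm{Diff}\,X$ (the paper's Lemma 2.3, proved by a one-step numerical integration scheme whose iterates converge to the Hamiltonian flow). Consequently your argument is missing a final appeal to the fiberwise continuity of $L$: from $L(\chi)=L(\chi\circ\varphi)$ for $\varphi\in G$ one passes to the same identity for $\varphi$ in the closure of $G$, and only then does \cite[Theorem 1.2]{L4} apply. So the architecture of your proof is right, but the bridge from holonomy loops to Hamiltonian diffeomorphisms requires the Poisson-bracket/density argument rather than a Moser construction, and the continuity hypothesis on $L$ is not decorative --- it is what lets you exploit density.
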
     

For the proof we need to understand the holonomy groups $\Gamma_{uu}$. (1.1) shows that in general, elements of 
$\Gamma_{uv}$, isomorphisms $T_u \cH\to T_v \cH$, are pullbacks by certain symplectomorphisms 
$\varphi:(X,\omega_v)\to (X, \omega_u)$. Let us write $G$ for those symplectomorphisms that induce elements of 
$\Gamma_{00}$. Thus $G$ is a subgroup of the Fr\'echet--Lie group Diff$\,X$ of diffeomorphisms of $X$.

\begin{lem}    
The closure of $G$ in Diff$\,X$ contains all Hamiltonian diffeomorphisms of $(X,\omega)$.
\end{lem}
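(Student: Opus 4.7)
Given $\xi, \eta \in C^\infty(X) \cong T_0\cH$ and small $\var > 0$, my plan is to parallel-transport around the piecewise-linear rectangular loop $u_\var \colon [0,4\var] \to \cH$ at $0$ whose successive legs are $t\xi$, $\var\xi + (t-\var)\eta$, $(3\var-t)\xi + \var\eta$, and $(4\var-t)\eta$. These lie in $\cH$ for small $\var$ since $\omega + a\,dd^c\xi + b\,dd^c\eta > 0$ when $|a|+|b|$ is small. Let $\psi_\var \in G$ be the holonomy of $u_\var$, obtained by integrating the time-dependent vector field in (1.1). A direct Taylor expansion of this ODE in $\var$ shows $\psi_\var = \exp(\var^2 Z_{\xi,\eta}) + O(\var^3)$ in $\mathrm{Diff}\,X$, the first-order contribution vanishing by the symmetry of the loop.

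To identify $Z_{\xi,\eta}$ I would invoke the known curvature formula of Mabuchi's connection \cite{M,S,Do}: $R_0(\xi,\eta)\zeta = -\tfrac{1}{4}\{\{\xi,\eta\},\zeta\}$, with $\{,\}$ the Poisson bracket of $\omega$. Since parallel transport along (1.1) acts on tangent vectors by pullback under a diffeomorphism of $X$, this identifies $Z_{\xi,\eta}$ with $\tfrac{1}{4}X_{\{\xi,\eta\}}$, where $X_h$ is the Hamiltonian vector field of $h$. Concatenating $N$ copies of $u_{\var_N}$ with $N\var_N^2 \to 4T$ produces loops whose holonomy $\psi_{\var_N}^N$ converges in $\mathrm{Diff}\,X$, as $N \to \infty$, to the time-$T$ Hamiltonian flow of $\{\xi,\eta\}$, essentially because $(\exp\var^2 Z)^N = \exp(N\var^2 Z)$ with $C^\infty$-error $O(N\var^3) = O(T\var)$. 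Hence $\overline G$ contains the full Hamiltonian flow of every Poisson bracket $\{\xi,\eta\}$.

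Next, by the Calabi--Lichnerowicz theorem on the simplicity of the Poisson Lie algebra of a closed symplectic manifold, every zero-mean smooth function is a finite sum $\sum_i \{\xi_i,\eta_i\}$. A Trotter product argument then realizes the time-$t$ Hamiltonian flow of any zero-mean function as a $C^\infty$-limit of products of flows of brackets, hence an element of $\overline G$. Since $X_{H+c} = X_H$ for any constant $c$, $\overline G$ contains the flow $\psi_H^t$ of every $H \in C^\infty(X)$. Any element of $\Ham(X,\omega)$ is the time-$1$ map of a Hamiltonian isotopy with smooth time-dependent generator $H_t$; a uniform partition of $[0,1]$ approximates it by a product of short-time flows $\psi_{H_{t_i}}^{\delta}$, each already in $\overline G$, and closedness of $\overline G$ under composition completes the proof.

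The main obstacle I expect is the quantitative control required to make the curvature expansion and its iteration rigorous: checking that the remainder in $\psi_\var = \exp(\var^2 Z) + O(\var^3)$ is $O(\var^3)$ in every $C^k$ norm on $X$, so that iterating $N \sim \var^{-2}$ times preserves $C^\infty$-convergence. The estimates ultimately rest on the smooth dependence of ODE flows on smooth parameters and are routine, but the bookkeeping requires some care.
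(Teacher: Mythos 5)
Your proposal is correct in outline and, at the structural level, runs parallel to the paper's proof: both arguments reduce the lemma to (i) reaching the Hamiltonian flow of a single Poisson bracket $\{\xi,\eta\}$ from $G$, (ii) writing an arbitrary zero--mean smooth function as a finite sum of Poisson brackets, and (iii) approximating a time--dependent Hamiltonian isotopy by a product of short--time maps via a one--step Euler--type scheme. The genuine difference is in step (i) and in the order of the limits. The paper invokes Semmes' explicit construction ([S], top of p.~512) of a smooth curve in $G$ whose velocity at the identity is $\text{sgrad}\,\{\xi,\eta\}$, upgrades the Atkin--Grabowski decomposition to depend smoothly on a parameter $s$, assembles the full generator $\text{sgrad}\,\zeta(s)$ by composing these curves, and then integrates only once at the very end. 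You instead produce the flow of each bracket directly, as an iterated holonomy of small rectangular loops identified through Mabuchi's curvature formula, and then combine flows by Trotter products and an Euler scheme. Your route avoids the parametrized ($s$--dependent) version of the Atkin--Grabowski decomposition, which costs the paper a paragraph of justification, at the price of two additional limiting procedures.

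Three points in your sketch deserve care. First, the expansion $\psi_\var=\exp(\var^2 Z_{\xi,\eta})+O(\var^3)$ with $Z_{\xi,\eta}=\tfrac14\,\text{sgrad}\,\{\xi,\eta\}$ is where the real work sits: the legs of your loop are integrated against the varying metrics $\omega_{u(t)}$, so the first--order cancellation and the identification of the second--order term with the curvature endomorphism must be verified by a direct computation rather than quoted from the finite--dimensional holonomy--curvature principle; that computation is essentially the content of Semmes' construction, so you are not getting this step for free. Second, what you actually need from ``Calabi--Lichnerowicz'' is perfectness of the Poisson algebra of zero--mean functions; the precise statement, with $m=4n+1$ terms, is \cite[(5.2) Theorem]{AG} (equivalently \cite[Lemma 4.1]{S}), and that is the citation to use. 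Third, the uniform $C^k$ bookkeeping for $N\sim\var^{-2}$ compositions of near--identity diffeomorphisms is of the same nature as, but heavier than, the one--step estimate the paper delegates to \cite{HNW,An}; it is routine but is the only place where the Fr\'echet topology on $\text{Diff}\,X$ genuinely enters, so it should not be waved away.
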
    

Recall that Hamiltonian diffeomorphisms are time--1 maps of time dependent Hamiltonian vector fields sgrad$\,\zeta_t$, i.e., vector fields that are symplectic gradients with $\zeta_t\in C^\infty(X,\omega)$ a smooth family, $t\in[0,1]$.

\begin{proof}
Let $\frak g$ (the ``Lie algebra'' of $G$) consist of smooth vector fields $V$ on $X$ for which there is a smooth map $\varphi:[0,1]\to G\subset \text{Diff}\, X$ such that $\varphi(0)=\text{id}_X$ and $\dot\varphi(0)=d\varphi(t)/dt|_{t=0}=V$. This is a vector subspace of the space of all vector fields: for example, if $\varphi,\psi$ realize vector fields $V, W\in\frak{g}$, then $\varphi(t)\circ\psi(t)$ realizes $V+W$. In \cite[pp. 512-513]{S} Semmes essentially proved that $\frak g$ contains all Hamiltonian vector fields sgrad$\,\zeta$. Essentially only, because the proof of his Lemma 4.1 is given only in Sobolev spaces, not in $C^\infty(X)$. At any rate, we will need a slightly stronger, parametrized statement, to wit: If $\zeta:[a,b]\to C^\infty(X)$ is smooth, then there is a smooth family
\begin{equation}   
[a,b]\times[0,1]\ni (s,t) \mapsto \varphi^s_t\in G\subset \text{Diff}\,X
\end{equation}
such that $\varphi^s_0=\text{id}_X$ and $\partial_t\varphi^s_t |_{t=0}=\text{sgrad} \,\zeta(s)$ for all $s$.

To verify this, recall Semmes' construction in \cite[top of p. 512]{S} that, given $\xi,\eta\in C^\infty(X)$, shows that the 
Poisson bracket 
$\{\xi,\eta\}\in C^\infty(X)$, determined by $\omega$, has symplectic gradient in $\frak g$. The same construction works 
with a parameter appended. Thus, if $\xi,\eta:[a,b]\to C^\infty(X)$ are smooth, there is a smooth family $\varphi^s_t\in G$ as in (2.2), $\varphi_0^s=\text{id}_X$ and $\partial_t\varphi^s_t |_{t=0}=\text{sgrad} \{\xi(s),\eta(s)\}$. But any smooth $\zeta:[a,b]\to C^\infty(X)$ such that $\int_X\zeta(s)\omega^n=0$ can be written
\begin{equation}    
\zeta(s)=\sum^m_{j=1} \{\xi_j(s),\eta_j(s)\}, \quad m=4n+1,
\end{equation}
with suitable smooth $\xi_j, \eta_j:[a,b]\to C^\infty(X)$. In fact $\xi_j$ can be chosen constant, and arbitrary as long as 
$\xi_j(s)\equiv\oxi_j$ embed $X$ into $\bR^m$. The statement, without $s$--dependence, corresponds to 
\cite[Lemma 4.1]{S}, but was already proved in \cite{AG}. Atkin and Grabowski's proof is easily modified to provide (2.3). The proof of \cite[(5.2) Theorem]{AG} depends on \cite[(2.6) Proposition]{AG}, the $s$--dependent version of which says that if $\xi_j\in C^\infty(X)$,  $j=1,\dots,m$, embed $X$ into $\bR^m$, then any smooth family 
$\psi^s$ of smooth $k$-forms on $X$, $s\in [a,b]$, can be written 
\[\psi^s=\sum_{i_1, i_2,\dots} f_{i_1\dots i_k}(s) d\xi_{i_1}\wedge\dots\wedge d\xi_{i_k}\]
with $f_{i_1\dots i_k} :[a,b] \to C^\infty(X)$ smooth. This is proved by an obvious cohomology vanishing as in \cite{AG}. Another ingredient of the proof of \cite[(5.2) Theorem]{AG}, on p. 325 there, in $s$--dependent version says that given a smooth family $\alpha^s$ of exact smooth forms on $X$, there is a smooth family $\beta^s$ of smooth forms such that $d\beta^s=\alpha^s$.
One way to prove this is by Hodge theory, which gives that the unique solution $\beta^s$ of $d\beta^s=\alpha^s$ that is othogonal to Ker$\,d$ depends smoothly on $s$. The rest of the proof in \cite{AG} manipulates identities, and changes not if a parameter $s$ is appended. Thus (2.3) is proved.

We can now construct $\varphi^s_t\in G$ as in (2.2). First, subtracting from $\zeta$ a smooth function $c:[a,b]\to\bR$ we obtain $\zeta':[a,b]\to C^\infty(X)$ with $\int_X\zeta'(s)\omega^m=0$. We find $\xi_j, \eta_j$ as in (2.3), corresponding to $\zeta'$ rather than $\zeta$, and then smooth maps $(s,t)\mapsto \varphi^s_{jt}\in G$ such that $\varphi^s_{j0}=\text{id}_X$ and $\partial_t\varphi^s_{jt}=\{\xi_j(s),\eta_j(s)\}$ at $t=0$. Since $\text{sgrad}\,\zeta=\text{sgrad}\,\zeta'$, the diffeomorphisms
\[\varphi^s_t=\varphi^s_{1t}\circ \varphi^s_{2t}\circ\dots\circ\varphi^s_{mt}\]
have $t$--derivative $\text{sgrad}\,\zeta(s)$ at $t=0$.

After these preparations we are ready to prove the lemma. 
Suppose $\varphi^1:(X,\omega)\to (X,\omega)$ is a Hamiltonian diffeomorphism. This means it can be included in the flow $\varphi^s$ of Hamiltonian vector fields $V^s=\text{sgrad}\,\zeta(s)$, 
\begin{equation}     
\partial_s\varphi^s=V^s(\varphi^s),\quad 0\le s\le 1, \quad \varphi^0=\text{id}_X.
\end{equation}
Here $\zeta:[0,1]\to C^\infty(X)$ is smooth. The $\varphi^s_t$ constructed above for this $\zeta$ can be used as integrators in a 1--step scheme to approximate the solution of the initial value problem (2.4). General theory gives that 
\begin{equation}       
\varphi^{(k-1)/k}_{1/k}\circ \varphi^{(k-2)/k}_{1/k}\circ \dots\circ\varphi^0_{1/k}\to \varphi^1\quad \text{ in } C^\infty(X)
\end{equation}
as $k\to \infty$.

(Details are as follows. Smoothly embed $X$ in some $\bR^m$ and with $p\in\bN$, view $\varphi^s$ as an element of the Banach space $B=C^p(X)\times\dots\times C^p(X)$, $m$ copies of $C^p(X)$. Extend $\varphi^s_t: X\to X$ to a smooth family of maps $\psi^s_t:\bR^m\to\bR^m$ and extend $V^s$ to a vector field on $\bR^m$ by $V^s=\partial_t\psi^s_t |_{t=0}$.
The error analysis of e.g. \cite[p.160, Theorem 3.4]{HNW}, or more directly \cite[Theorem 4.1, Corollary 4.2]{An}, gives that 
\begin{equation}       
\psi^{(k-1)/k}_{1/k}\circ \psi^{(k-2)/k}_{1/k}\circ\dots\circ \psi^0_{1/k}\circ\varphi^0\to \varphi^1\quad \text{ in } B
\end{equation}
as $k\to\infty$. Both \cite{HNW, An} work in finite dimensional Banach spaces, the latter in $\bC^m$, but the same reasoning proves the result in any Banach space.)

Since the left hand side of (2.6) is $\varphi^{(k-1)/k}_{1/k}\circ\dots\varphi^0_{1/k}\in G$, we proved that $\varphi^1$ 
is indeed in the closure of $G$.
\end{proof}

\begin{proof}[Proof of Theorem 2.2.]
 That invariance under strict rearrangements implies holonomy invariance follows since parallel transport is realized by composition with a symplectomorphism, and such compositions send functions to their strict rearrangements. The converse implication depends on Lemma 2.3. This implies that $L(\xi)=L(\xi\circ\varphi)$ if $\xi\in T_0 \cH$ and $\varphi\in\text{Diff}(X,\omega)$ is Hamiltonian. By \cite[Theorem 1.2]{L4}, $L |T_0\cH$ is therefore invariant under strict rearrangements. To complete the proof, take $\xi\in T_u \cH$, $\eta\in T_v \cH$ such that $\xi\sim\eta$. Parallel translate $\xi,\eta$ to $\xi', \eta'\in T_0 \cH$ along arbitrary smooth paths. Then $\xi'\sim\xi\sim\eta\sim\eta'$, whence $L(\xi)=L(\xi')=L(\eta')=L(\eta)$.
\end{proof}

In what follows, a fiberwise continuous and convex function $L:T\cH\to \bR$ that is invariant under strict rearrangements will be called an invariant convex Lagrangian. The chief device to analyze their finer properties is the following representation theorem. We write $B(X)$ or $B(X,\mu)$---when a Borel measure $\mu$ on $X$  plays a role---for the Banach space of bounded Borel functions on $X$, with the supremum norm $\|\ \ \|$. 

\begin{thm}     
Given an invariant convex Lagrangian $L: T\cH\to\bR$, there are families $\cA_u\subset\bR\times B(X)$, $u\in \cH$ such that for $\xi\in T_u \cH\approx C^\infty(X)$
\begin{equation}    
L(\xi)=\sup_{(a,f)\in\cA_u} a+\int_X f\xi\omega^n_u.
\end{equation}
$\cA_u$ can be chosen in $\bR\times C^\infty(X)$, and have the property that whenever $(a,f)\in \cA_u$ and $\varphi:(X,\omega_v)\to (X, \omega_u)$ is a symplectomorphism, then $(a,f\circ\varphi)\in \cA_v$. Alternatively, $\cA_u$ can be chosen to be strict rearrangement invariant: if $f\in B(X,\mu_u)$ and $g\in B(X,\mu_v)$ are equidistributed, and
$(a,f)\in\cA_u$, then $(a,g)\in \cA_v$.

If $L$ is also positively homogeneous, $(L(c\xi)=L(\xi)$ whenever $c\in (0,\infty))$, then in addition $\cA_u$ can be chosen in $\{0\}\times C^\infty(X)$, respectively, in $\{0\}\times B(X).$
\end{thm}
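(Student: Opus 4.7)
The plan is a fiberwise Hahn--Banach representation combined with the invariance results on $C^\infty(X)$ from \cite{L4}, glued across tangent spaces by parallel transport. First I would fix $u\in\cH$ and restrict to $L_u := L|_{T_u\cH}$: a continuous, convex function on the Fr\'echet space $C^\infty(X)$ that, by Theorem 2.2, is invariant under strict rearrangements with respect to $\mu_u$. This is precisely the setting of \cite{L4}, whose representation theorems identify $L_u$ with the pointwise supremum of its continuous affine minorants and, crucially, guarantee that these minorants can be taken of the form $\xi\mapsto a+\int_X f\xi\,\omega_u^n$ with $f$ in $C^\infty(X)$, or $f$ in $B(X)$ for the coarser variant. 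Using rearrangement invariance of $L_u$, one may additionally enlarge $\cA_u$ to contain the full rearrangement orbit of every $(a,f)$ without altering the supremum, giving the strict rearrangement invariant version.

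Next I would handle the compatibility across fibers. Fix a base point, say $0\in\cH$, and choose $\cA_0$ as above. For arbitrary $v\in\cH$, connect $0$ to $v$ by a smooth path in $\cH$; by formula (1.1) the associated parallel transport is pullback by a symplectomorphism $\psi:(X,\omega_v)\to(X,\omega_0)$. Define $\cA_v:=\{(a,f\circ\psi):(a,f)\in\cA_0\}$. A change of variables using $\psi^{*}\omega_0^n=\omega_v^n$ transforms $\int_X f\,(\xi\circ\psi^{-1})\,\omega_0^n$ into $\int_X(f\circ\psi)\,\xi\,\omega_v^n$, while holonomy invariance (Theorem 2.2) gives $L(\xi)=L_0(\xi\circ\psi^{-1})$, so the representation transports. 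The equivariance demanded by the theorem---that $(a,f)\in\cA_u$ and a symplectomorphism $\varphi:(X,\omega_v)\to(X,\omega_u)$ yield $(a,f\circ\varphi)\in\cA_v$---then reduces to showing that $\cA_v$ built from two different paths out of $0$ agrees up to a supremum-preserving modification; this holds because the two candidate families differ by composition with an element of the holonomy group $\Gamma_{00}$, under which $L_0$ is invariant.

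For the positively homogeneous case, I would use a scaling reduction: if $L(c\xi)=cL(\xi)$ for $c>0$, then any supporting pair $(a,f)$ satisfies $a+\int_X f(c\xi)\,\omega_u^n\le L(c\xi)=cL(\xi)$, so $a/c+\int_X f\xi\,\omega_u^n\le L(\xi)$ for every $c>0$; letting $c\to\infty$ shows $(0,f)$ supports $L$ as well, and hence $\cA_u$ may be chosen in $\{0\}\times C^\infty(X)$ or $\{0\}\times B(X)$. The main technical obstacle---and the reason \cite{L4} is the essential input---is the step from the abstract Hahn--Banach assertion (in which the linear part is a priori only a continuous linear functional on $C^\infty(X)$, that is, a distribution) to the stronger form in which it is represented by an honest integral against a smooth or bounded function. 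That regularity gain is precisely what rearrangement invariance buys and what \cite{L4} supplies; once it is in hand, the passage between fibers via parallel transport is essentially bookkeeping.
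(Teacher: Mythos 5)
Your overall route is the same as the paper's: represent $L|T_0\cH$ by affine functionals with smooth densities using \cite{L4}, transport the family to the other fibers by symplectomorphisms $(X,\omega_u)\to(X,\omega)$, enlarge to rearrangement orbits for the second variant, and treat the homogeneous case separately. Two of your deviations are harmless. Your direct scaling argument for the positively homogeneous case is correct (note that to see the supremum is not lowered when the constants are discarded you also need $a\le L(0)=0$, which follows from homogeneity), whereas the paper simply cites the corresponding part of \cite[Lemma 2.1]{L4}. Your worry about path-dependence of the transport is handled more simply in the paper: one first closes $\cA_0$ under \emph{all} symplectomorphisms of $(X,\omega)$, then defines $\cA_u$ using all symplectomorphisms $(X,\omega_u)\to(X,\omega)$ at once; this makes the stated equivariance property hold literally rather than ``up to a supremum-preserving modification''.

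The genuine gap is the sentence ``one may additionally enlarge $\cA_u$ to contain the full rearrangement orbit of every $(a,f)$ without altering the supremum''. This is not automatic. A $g\in B(X,\mu_u)$ equidistributed with $f$ need not be of the form $f\circ\varphi$ with $\varphi$ a symplectomorphism, and the obvious attempt to verify $a+\int_X g\xi\,\omega^n_u\le L(\xi)$ by moving the rearrangement onto $\xi$ fails: one would need the supporting inequality $a+\int_X f\eta\,\omega^n_u\le L(\eta)$ for rearrangements $\eta$ of $\xi$, which are in general only Borel, while at this stage the inequality is known only for $\eta\in C^\infty(X)$. The paper closes this with a Katok-type approximation lemma \cite[Lemma 3.2]{L4}: if $(f,\mu_0)\sim(g,\mu_0)$ there are symplectomorphisms $\varphi_k$ of $(X,\omega)$ with $\int_X|g-f\circ\varphi_k|\,\omega^n\to 0$, whence $\int_X g\xi\,\omega^n=\lim_k\int_X f(\xi\circ\varphi_k^{-1})\,\omega^n\le\lim_k\bigl(L(\xi\circ\varphi_k^{-1})-a\bigr)=L(\xi)-a$, the last equality by invariance of $L$. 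You need to supply this (or an equivalent) approximation step; without it the strict-rearrangement-invariant version of the family is not justified.
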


\begin{proof}
Most of the proof was done in \cite{L4}. Lemma 2.1 there produces $\cA_0\subset\bR\times C^\infty(X)$ that satisfies (2.7) when $u=0$. If we adjoin to $\cA_0$ all pairs $(a,f\circ\varphi)$ with $(a,f)\in\cA_0$ and $\varphi:(X,\omega)\to (X,\omega)$ a symplectomorphism, because of the invariance of $L$ the supremum in (2.7) is not going to change (for $u=0)$. So we can assume that $\cA_0$ already is invariant under symplectomorphisms. We then define $\cA_u$ to consist of pairs $(a,f\circ\psi)$ with $(a,f)\in\cA_0$ and $\psi:(X, \omega_v)\to (X,\omega)$ a symplectomorphism. This will do, since if $\xi\in T_u \cH$, with the above $\psi$
\begin{align*}
\sup_{(a,g)\in \cA_u}& a+\int_X g\xi \omega^n_v=\sup_{(a,g)\in\cA_u}a+ \int_X(g\circ\psi^{-1})(\xi\circ\psi^{-1})\omega^n\\
=&\sup_{(a,f)\in\cA_0} a+\int_X (\xi\circ\psi^{-1}) f\omega^n=L(\xi\circ \psi^{-1})=L(\xi).
\end{align*}

Alternatively, we can modify the above $\cA_u$ to $\cA'_u$ consisting of all $(a, g)\in\bR\times B(X)$ for which there is 
$(a,f)\in\cA_0$ such that $(f,\mu_0)\sim(g,\mu_u)$.This will not change the supremum 
in (2.7), with $\cA'_u$ now, either. It suffices to check this for $u=0$. By a variant of a lemma of Katok, 
\cite[Lemma 3.2]{L4}, if $(f,\mu_0)\sim (g,\mu_0)$ then there is a sequence $\varphi_k:(X,\omega)\to (X,\omega)$ of 
symplectomorphisms such that $\int_X|g-f\circ\varphi_k|\omega^n\to 0$. Therefore
\begin{align*}
\int_X g\xi\omega^n&=\lim_{k\to\infty} \int_X(f\circ \varphi_k)\xi\omega^n=\lim_{k\to\infty}\int_X(\xi\circ\varphi^{-1}_k)f\omega^n,\quad \text{and so }\\
&a+\int_X g\xi\omega^n\le \lim_{k\to\infty} L(\xi\circ\varphi^{-1}_k)=L(\xi).
\end{align*}
Thus replacing $\cA_u$ with $\cA'_u$, (2.7) will still hold, and $\cA'_u$ is now strict rearrangement invariant.

Finally, if $L$ is positively homogeneous, the statement of the theorem follows in the same way from the corresponding part of \cite[Lemma 2.1]{L4}.
\end{proof}

The Lagrangians in this section were required to be continuous on the fibers of $T\cH$. But, coupled with invariance, this implies continuity on $T\cH$:

\begin{thm}   
An invariant convex Lagrangian $L:T\cH\to\bR$ is a continuous function on the Fr\'echet manifold $T\cH$.
\end{thm}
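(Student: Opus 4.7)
The plan is to reduce joint continuity on $T\cH \approx \cH\times C^\infty(X)$ to the already known fiberwise continuity by using parallel transport to move everything to a single fiber, exploiting the invariance of $L$ under the holonomy groupoid $\Gamma$. Suppose $(u_k,\xi_k)\to(u_0,\xi_0)$ in $T\cH$, i.e.\ $u_k\to u_0$ and $\xi_k\to\xi_0$ in $C^\infty(X)$. Since $\cH$ is open in $C^\infty(X)$, and the set $\{u_0+s(u_k-u_0):k\ge k_0,\ 0\le s\le 1\}\cup\{u_0\}$ is compact in $C^\infty(X)$ for a suitable $k_0$, for all $k\ge k_0$ the line segment $u_k(t)=u_0+t(u_k-u_0)$, $t\in[0,1]$, lies inside $\cH$. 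This gives a smooth path in $\cH$ from $u_0$ to $u_k$ along which we can parallel transport.

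Next I would invoke formula (1.1): parallel transport from $T_{u_k}\cH$ back to $T_{u_0}\cH$ along $u_k(\cdot)$ is given by $\xi_k\mapsto \xi_k\circ\varphi_k$, where $\varphi_k:X\to X$ is the time-$1$ diffeomorphism obtained by integrating the time-dependent vector field $V_k(t)=(-1/2)\,\mathrm{grad}_{u_k(t)}(u_k-u_0)$ on $X$. Since $u_k\to u_0$ in the Fr\'echet topology, and since the map $C^\infty(X)\times\cH\to C^\infty(TX)$, $(\zeta,w)\mapsto\mathrm{grad}_w\zeta$, is smooth (as it depends polynomially on $\zeta$, the components of $w$, and derivatives thereof, divided by $\det$ of a positive matrix), the family $V_k$ converges to the zero vector field uniformly in $t\in[0,1]$ and in every $C^r$ seminorm. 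Standard continuous dependence of ODE flows on the driving vector field, applied in each $C^r$ norm separately, then yields $\varphi_k\to\mathrm{id}_X$ in $C^\infty(X,X)$.

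Once $\varphi_k\to\mathrm{id}_X$ smoothly, composition $\xi\mapsto \xi\circ\varphi_k$ is continuous on $C^\infty(X)$, and in fact jointly continuous in $(\xi,\varphi_k)$, so $\xi_k\circ\varphi_k\to \xi_0$ in $C^\infty(X)$. By holonomy invariance (equivalently, by Theorem 2.2), $L(\xi_k)=L(\xi_k\circ\varphi_k)$, and by the assumed fiber continuity of $L$ on $T_{u_0}\cH$,
\[
L(\xi_k)=L(\xi_k\circ\varphi_k)\longrightarrow L(\xi_0).
\]
This establishes continuity of $L$ at the arbitrary point $(u_0,\xi_0)\in T\cH$.

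The main obstacle, and the only substantive step, is justifying $\varphi_k\to\mathrm{id}_X$ in $C^\infty(X,X)$ from $u_k\to u_0$ in $C^\infty(X)$; everything else is either a structural rewriting (invariance, trivialization of $T\cH$) or an appeal to the hypothesis on $L$. This ODE step is routine provided one works seminorm by seminorm in the Fr\'echet topology, using the fact that $\varphi_k^s$, depending smoothly on finitely many derivatives of the driving field, converges in each $C^r(X,X)$-topology as the driving field converges in a sufficiently high $C^{r'}$-topology; no infinite-dimensional subtleties intervene because $X$ is a finite-dimensional compact manifold.
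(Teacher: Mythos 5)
Your proof is correct and follows essentially the same route as the paper: parallel transport $\xi_k$ back to $T_{u_0}\cH$ along the straight segment joining $u_0$ to $u_k$, observe that the resulting diffeomorphisms converge to $\mathrm{id}_X$ in the $C^\infty$ topology, and conclude by invariance plus fiberwise continuity. (The only superfluous step is your compactness argument for the segments lying in $\cH$: since $\omega_{tu+(1-t)v}=t\omega_u+(1-t)\omega_v$, the set $\cH$ is convex and every such segment lies in $\cH$ automatically.)
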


\begin{proof}
Suppose $u,u_k\in \cH$, $\xi \in T_u\cH$, $\xi_k\in T_{u_k} \cH$, and $\xi_k\to\xi$. This simply means that as elements of $C^\infty(X)$, $u_k\to u$ and $\xi_k\to\xi$. Parallel translate $\xi_k$ to $\eta_k\in T_u \cH$ along the straight line segment $t\mapsto tu+(1-t)u_k$. This is done by integrating the time dependent vector field (1/2) grad$_{tu+(1-t)u_k}(u_k-u)$ on $X$, for $0\le t\le 1$. If the time--1 map is $\psi_k:X\to X$,
then $\eta_k=\xi_k\circ\psi_k$. Since $\psi_k\to \text{id}_X$ in the $C^\infty$ topology, $\eta_k\to\xi$ in 
$C^\infty(X)\approx\ T_u\cH$, as $k\to\infty$. Hence $\lim_k L(\xi_k)=\lim_kL(\eta_k)=L(\xi)$, as claimed.
\end{proof}

\section{Extending Lagrangians}  

As said in the Introduction, weak geodesics tend not to stay in the space $\cH$. Therefore, even in order to formulate a principle of least action we need to evaluate the action of a Lagragian along paths in spaces larger than $\cH$. In this section we will extend invariant convex Lagrangians $T\cH\to\bR$ to larger Banach bundles and describe properties of the extended Lagrangians.

We start by recalling definitions. Let $Y$ be a complex manifold and $\Omega$ a smooth real $(1,1)$ form on it, $d\Omega=0$. A function $u:Y\to [-\infty,\infty)$ is $\Omega$--plurisubharmonic if $\rho+u$ is plurisubharmonic whenever 
$\rho$ is a potential of $\Omega$ over some open $V\subset Y$, i.e., $\Omega|V=dd^c\rho$. We use the convention that 
$\equiv -\infty$ is not plurisubharmonic, and write $\pshO$ for the set of $\Omega$--plurisubharmonic functions. Back to our K\"ahler manifold $(X,\omega)$, we denote by $\cE(\omega)$ the class of $u\in\psho$ with full Monge--Amp\`ere mass, see \cite{GZ1}. The 
class contains all bounded $\omega$--plurisubharmonic functions. The Monge--Amp\`ere measure on $X$, corresponding to $\omega^n_u$, will again be denoted $\mu_u$. This is a Borel measure on $X$, its crucial property is $\mu_u(X)=\int_X\omega^n$. We endow $\cE(\omega)$ with the discrete topology, and let
\begin{equation}   
T^\infty\cE(\omega)=\cE(\omega)\times B(X),
\end{equation}
a trivial Banach bundle with fibers the bounded Borel functions on $X$. Corresponding to usage in the subject we will not distinguish between elements $\xi\in T_u^\infty\cE(\omega)$ and their representation $\xi\in B(X)$ in the trivialization (3.1). The embedding $C^\infty(X)\hookrightarrow B(X)$ induces an embedding $T\cH\hookrightarrow T^\infty\cE(\omega)$ of vector bundles, continuous if $\cH$ is considered with the discrete topology. We will also deal with a bundle between
the image of $T\cH$ and $T^\infty\cE(\omega)$, 
\[
T^c\cH=\cH\times C(X).
\]

\begin{defn}   
Suppose $u\in\cE(\omega)$ and $V\subset B(X,\mu_u)$ is a vector subspace. We say that a function 
$p:V\to\bR$ is strongly continuous if  $p(\xi_k)$  converges whenever  $\xi_k\in V$ is a uniformly 
bounded sequence that converges $\mu_u$ almost everywhere.
\end{defn}   
In this case $\lim_kp(\xi_k)$ depends only on $\xi=\lim_k\xi_k$, since another sequence $\eta_k\to\xi$ can be combined with $\xi_k$ into one sequence.

\begin{thm}    
Any invariant convex Lagrangian $L:T\cH\to\bR$  has a unique fiberwise continuous extension
to $T^c\cH$. This extension is strict rearrangement invariant and fiberwise convex. If in addition $L$
is strongly continuous on the fibers $T_u\cH$, then it has a unique extension 
to $T^\infty\cE(\omega)$ that is strict rearrangement invariant, and strongly continuous on the fibers 
$T^\infty_u\cE(\omega)$. This extension is 
 fiberwise convex. 
\end{thm}
For example, Darvas's  metrics in \cite{D2}, coming from finite Young weights, cf. (1.2), are strongly continuous on the fibers.---The proof  of the theorem will use the notion of decreasing  rearrangement of measurable functions 
$\eta:(Y,\nu)\to \bR$ on a measure space. This is a decreasing, upper semicontinuous function $\eta^\star:[0,\nu(Y)]\to\bR$, equidistributed with $\eta$. Thus $\nu(s\le\eta\le t)$ is equal to the length of the maximal interval on which 
$s\le\eta^\star\le t$. The requirement of upper semicontinuity for the decreasing function $\eta^\star$ translates to left continuity, which 
differs from the more usual right continuity requirement, but the difference is of no consequence. In our setting 
\begin{equation}   
\nu(\eta\ge \eta^\star(s))=s,
\end{equation}
and more generally,
\begin{equation}
\nu(\eta\ge t)\le\tau  \,\, \text{implies}\,\, \eta^\star(\tau)\le t,\qquad
\nu(\eta\ge t)\ge\tau  \,\, \text{implies}\,\, \eta^\star(\tau)\ge t.
\end{equation}   
When $\xi\in T_u^\infty\cE(\omega)$, we compute $\xi^\star$ with respect to the measure $\mu_u$.

\begin{proof}[Proof of Theorem 3.2]

Suppose first $L:T \cH\to\bR$ is just continuous. By \cite[Theorem 5.2]{L4} each $L|T_u\cH$ has a unique continuous 
extension $C(X)\to\bR$. These extensions are convex and strict rearrangement 
invariant, and together define the extension $\oL:T^c\cH\to\bR$ of $L$. To see that $\oL$ is strict rearrangement invariant,
given equidistributed $\xi\in T_u^c\cH$ and $\eta\in T^c_v\cH$, choose $\xi_j\in C^\infty(X)$ converging uniformly to $\xi$.
Let $\varphi\colon(X,\omega_v)\to(X,\omega_u)$ be a symplectomorphism (for example one that induces parallel transport
along some path connecting $u$ and $v$). Then $\xi\circ\varphi,\eta\in T_v^c\cH$ are equidistributed, whence
\[
\oL(\eta)=\oL(\xi\circ\varphi)=\lim_{j\to\infty} L(\xi_j\circ\varphi)=\lim_{j\to\infty} L(\xi_j)=\oL(\xi).
\]

As to the second case of the theorem, if $L$ is strongly continuous, by \cite[Theorem 5.2]{L4} 
$L|T_0 \cH: C^\infty(X)\to\bR$ has a unique strongly continuous and strict rearrangement 
invariant extension $q:B(X)\to\bR$; 
this extension is convex. Further to extend $q$ to $\oL:T^\infty\cE(\omega)\to\bR$, take a
$\xi\in T^\infty_u\cE(\omega)\simeq B(X,\mu_u)$. There is a measure preserving
$\theta:(X,\mu_0)\to[0,\mu_0(X)]$, the latter endowed with Lebesgue measure, see e.g.  \cite[Lemma 5.5]{L4}. Now let 
$\eta=\xi^\star\circ\theta\in T^\infty_0\cE(\omega)$ and
$\oL(\xi)=q(\eta)$. This is clearly the only strict rearrangement invariant way to extend $q: T^\infty_0\cE(\omega)\to\bR$ to $\oL: T^\infty\cE(\omega)\to\bR$, and  it is immediate that $\oL$ thus constructed 
has the properties claimed in the theorem.
\end{proof}

Further down we will not distinguish between an invariant convex Lagrangian $T\cH\to\bR$ that is (strongly) continuous 
on the fibers and its extension to $T^c\cH$, respectively, $T^\infty\cE(\omega)$ provided by Theorem 3.2; we 
will just refer to a (strongly continuous,) invariant, convex Lagrangian $L:T^c\cH\to\bR$ or $L:T^\infty\cE(\omega)\to\bR$.

\begin{lem}   
A strongly continuous, invariant, convex Lagrangian $L:T^\infty\cE(\omega)\to\bR$ is equi--Lipschitz continuous on bounded subsets of the fibers $T_u\cE(\omega)$ in the sense that given $R\in(0,\infty)$ there is an $A\in(0,\infty)$ such that for $u\in\cE(\omega)$ and $\xi,\eta\in T^\infty_u\cE(\omega)$
\begin{equation}    
\text{if}\quad \|\xi\|, \|\eta\| <R\quad \text{then}\quad |L(\xi)-L(\eta)|\le A \|\xi-\eta\|.
\end{equation}
\end{lem}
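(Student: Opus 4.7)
The plan is to combine strict rearrangement invariance with Helly's selection theorem to establish uniform boundedness on sup-norm balls, and then derive (3.4) from fiberwise convexity by a standard one-line argument. Let $V=\int_X\omega^n$, so $\mu_u(X)=V$ for all $u\in\cE(\omega)$. For $\xi\in T^\infty_u\cE(\omega)$ with $\|\xi\|\le R$, the decreasing rearrangement $\xi^\star:[0,V]\to[-R,R]$ and any measure-preserving map $\theta_u:(X,\mu_0)\to[0,V]$ produce $\xi^\star\circ\theta_u\in B(X,\mu_0)$, equidistributed with $\xi$ and with sup norm $\le R$. By strict rearrangement invariance of $\oL$ (Theorem 3.2), $\oL(\xi)=q(\xi^\star\circ\theta_u)$, where $q:=\oL|_{T^\infty_0\cE(\omega)}$. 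It therefore suffices to find $M(R)<\infty$ with $|q(\sigma)|\le M(R)$ for all $\sigma\in B(X,\mu_0)$ with $\|\sigma\|\le R$; this $M(R)$ then controls $\oL$ uniformly on the $R$-ball of every fiber.

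For this boundedness I would argue by contradiction: suppose $\sigma_k\in B(X,\mu_0)$ with $\|\sigma_k\|\le R$ and $|q(\sigma_k)|\to\infty$. Replacing $\sigma_k$ by $\sigma_k^\star\circ\theta_0$ leaves $q(\sigma_k)$ unchanged, so each $\sigma_k$ may be taken of the form $\sigma_k^\star\circ\theta_0$ with $\sigma_k^\star:[0,V]\to[-R,R]$ monotone non-increasing. Helly's selection theorem extracts a subsequence converging at every continuity point of a bounded monotone limit $g:[0,V]\to[-R,R]$, and since monotone functions have at most countably many discontinuities, this gives a.e. convergence on $[0,V]$. Composing with $\theta_0$ yields a uniformly bounded sequence converging $\mu_0$-a.e. to $g\circ\theta_0\in B(X,\mu_0)$; strong continuity of $q$ then forces $q(\sigma_{k_j})\to q(g\circ\theta_0)\in\bR$, contradicting $|q(\sigma_k)|\to\infty$.

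With $|\oL|\le M(2R)$ on the $2R$-ball of every fiber in hand, (3.4) follows by a standard convexity trick. Given $\xi,\eta\in T^\infty_u\cE(\omega)$ with $\|\xi\|,\|\eta\|<R$ and $\delta:=\|\xi-\eta\|>0$, the vector $\zeta:=\eta+(R/\delta)(\eta-\xi)$ has $\|\zeta\|<2R$ and satisfies $\eta=\frac{R}{R+\delta}\xi+\frac{\delta}{R+\delta}\zeta$, so fiberwise convexity of $\oL$ gives
$$\oL(\eta)-\oL(\xi)\le\frac{\delta}{R+\delta}\bigl(\oL(\zeta)-\oL(\xi)\bigr)\le\frac{2M(2R)}{R}\delta,$$
and the symmetric estimate after interchanging $\xi$ and $\eta$ completes (3.4) with $A=2M(2R)/R$.

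The main obstacle is the uniform ball bound in the second step. A real-valued continuous convex function on an infinite dimensional Banach space need not be bounded on every ball, so the argument must genuinely exploit both strict rearrangement invariance, which lets us replace arbitrary bounded Borel functions by monotone functions on a fixed bounded interval where Helly compactness is available, and the full strength of strong continuity, which upgrades pointwise a.e. convergence of the rearrangements into convergence of $q$-values.
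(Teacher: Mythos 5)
Your proof is correct, but it takes a genuinely different route from the paper's. The paper disposes of the lemma in two lines: the case $u=0$ is quoted from \cite[Theorem 5.4]{L4}, and the Lipschitz constant is then transported to an arbitrary fiber by writing $L(\xi)=L(\xi^\star\circ\theta)$ and invoking the fact that decreasing rearrangement is non-expansive for the sup norm, $\sup|\xi^\star\circ\theta-\eta^\star\circ\theta|\le\|\xi-\eta\|$. You instead prove the base case from scratch: uniform boundedness of $q=\oL|T^\infty_0\cE(\omega)$ on sup-norm balls via rearrangement invariance, Helly's selection theorem and strong continuity, followed by the standard ``bounded convex implies locally Lipschitz'' argument; and you transport only the bound $M(2R)$, rather than the Lipschitz estimate itself, to the other fibers, rerunning the convexity argument separately in each fiber (legitimate, since Theorem 3.2 gives fiberwise convexity of $\oL$). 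What your route buys is self-containedness: you need neither the external \cite[Theorem 5.4]{L4} nor the non-expansiveness of the rearrangement map, which the paper uses without comment and which would require its own easy but nonzero verification. What it costs is length, and it makes explicit use of the full strength of strong continuity, a dependence the paper's citation hides. Both transfers to general $u$ ultimately rest on the same mechanism, namely $\oL(\xi)=q(\xi^\star\circ\theta)$ from the proof of Theorem 3.2. One small notational point: the measure preserving map $\theta:(X,\mu_0)\to[0,\mu_0(X)]$ does not depend on $u$ --- only the rearrangement $\xi^\star$, taken with respect to $\mu_u$, does --- so the subscript in your $\theta_u$ is misleading, though harmless.
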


\begin{proof}
According to \cite[Theorem 5.4]{L4} (3.4) holds when $u=0$. The same $A$ will work for any $u$, for with a measure preserving $\theta:(X,\mu_0)\to [0,\mu_0(X)]$ as in the proof of Theorem 3.2 and $\xi,\eta\in T^\infty_u\cE(\omega)$
\[|L(\xi)-L(\eta)|=|L(\xi^\star\circ\theta)-L(\eta^\star\circ\theta)|\le 
A\sup |\xi^\star\circ\theta-\eta^\star\circ \theta| \le A \|\xi-\eta\|.\]
\end{proof}

Although we have endowed $\cE(\omega)$ with the discrete topology, we will need a continuity property of Lagrangians 
$T^\infty\cE(\omega)\to\bR$ stronger than fiberwise. This will involve the notion of Monge--Amp\`ere capacity cap of subsets 
of $X$ \cite{BT,K,GZ2}. Recall that a function $\xi:X\to\bR$ is quasicontinuous if for every $\var>0$ there is an open
$G\subset X$ of capacity cap$\,(G)<\var$ such that $\xi|X\setminus G$ is continuous; and a sequence of functions 
$\xi_j:X\to\bR$ converges to $\xi:X\to\bR$ in capacity if $\lim_{j\to\infty}\text{cap}(|\xi_j-\xi|>\delta)=0$ for every $\delta>0$.
In particular, a uniformly convergent sequence converges in capacity.

\begin{lem}   
Let $L:T^\infty\cE(\omega)\to\bR$ be strongly continuous, invariant, and convex.
Suppose $u_k\in\cE(\omega)$ either decrease, or uniformly converge, to a bounded $u\in\cE(\omega)$ as $k\to\infty$, and
uniformly bounded
$\xi_k\in T^\infty_{u_k}\cE(\omega)\approx B(X)$ converge in capacity to $\xi\in T^\infty_u\cE(\omega)\approx B(X)$. 
If $\xi$ is quasicontinuous, then $\xi_k^\star\to\xi^\star$ away from a countable subset of $[0,\mu_0(X)]$, and
$\lim_k L(\xi_k)=L(\xi)$.  
\end{lem}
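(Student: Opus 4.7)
The plan is to transport everything to a common reference space via decreasing rearrangements and then apply strong continuity of the induced Lagrangian on that space. Set $m=\mu_0(X)=\int_X\omega^n$, the common total mass of every $\mu_{u_k}$ and $\mu_u$, and let $\xi_k^\star,\xi^\star:[0,m]\to\bR$ be the decreasing rearrangements of $\xi_k$ under $\mu_{u_k}$ and of $\xi$ under $\mu_u$. Fix a measure preserving $\theta:(X,\mu_0)\to[0,m]$. By the explicit form of the extension in the proof of Theorem 3.2, $L(\xi_k)=q(\xi_k^\star\circ\theta)$ and $L(\xi)=q(\xi^\star\circ\theta)$, where $q:B(X,\mu_0)\to\bR$ is the strongly continuous, strict rearrangement invariant extension of $L|_{T_0\cH}$. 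Assuming (or reducing to, using the equi-Lipschitz Lemma 3.4 combined with truncation) a uniform bound $\|\xi_k\|\le R$, the same bound holds for $\xi_k^\star$ and $\xi^\star$, and the problem is reduced to showing $\xi_k^\star\to\xi^\star$ almost everywhere on $[0,m]$: strong continuity of $q$ on bounded a.e.-convergent sequences then delivers $L(\xi_k)\to L(\xi)$.

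To obtain the a.e. convergence of rearrangements, I would first establish weak convergence of the pushforward measures $(\xi_k)_*\mu_{u_k}\to\xi_*\mu_u$ on $\bR$. Pick $g\in C_b(\bR)$. Then $g\circ\xi_k$ is uniformly bounded, converges to $g\circ\xi$ in capacity (composing with a continuous function preserves this mode of convergence), and the limit $g\circ\xi$ is quasicontinuous because $\xi$ is. Under either of our hypotheses on $u_k\to u$ (monotone decrease, or uniform convergence to a bounded limit) one also has $u_k\to u$ in capacity. The standard convergence theorems for Monge-Amp\`ere measures of bounded $\omega$-plurisubharmonic potentials, applied against uniformly bounded quasicontinuous integrands converging in capacity (Bedford-Taylor \cite{BT}, Guedj-Zeriahi \cite{GZ2}; boundedness of $u$ is essential here, since $\mu_u$ must be absolutely continuous with respect to capacity), then give
\[
\int_X(g\circ\xi_k)\,d\mu_{u_k}\longrightarrow\int_X(g\circ\xi)\,d\mu_u,
\]
which is exactly the desired weak convergence of pushforwards.

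Weak convergence of finite Borel measures on $\bR$ of common total mass $m$ forces the (complementary) distribution functions $F_k(t)=\mu_{u_k}(\xi_k>t)$ to converge to $F(t)=\mu_u(\xi>t)$ at every continuity point of $F$, and the standard real analysis bridge between distribution and quantile functions (Helly/Skorokhod) then yields $\xi_k^\star(s)\to\xi^\star(s)$ at every continuity point of $\xi^\star$, hence a.e.\ since $\xi^\star$ is monotone. I expect the hard step to be the joint limit in the display above: one must pass simultaneously to the limit in the measures $\mu_{u_k}$ and in the integrands $g\circ\xi_k$, and this is precisely why the hypotheses ``$u$ bounded'' and ``$\xi$ quasicontinuous'' are built into the statement. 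The only other delicate point is the uniform bound on $\|\xi_k\|$; if it is not given a priori, Lemma 3.4 together with a truncation $T_M\xi_k$ for $M>\|\xi\|$ and a separate estimate controlling the $\mu_{u_k}$-mass of $\{|\xi_k|>M\}$ via capacity should reduce to the bounded case.
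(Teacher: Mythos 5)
The proposal is correct and takes essentially the same route as the paper: reduce everything to a.e.\ convergence of the decreasing rearrangements $\xi_k^\star\to\xi^\star$ on $[0,\mu_0(X)]$ (equivalently, convergence of the distribution functions $\mu_{u_k}(\xi_k>t)$ at continuity points) and then invoke strong continuity of the extension on the reference fiber via a measure preserving $\theta$. The ``hard step'' you flag is resolved in the paper exactly as you anticipate---one normalizes $0\le u_k\le1$ so that $\mu_{u_k}(A_k)\le\mathrm{cap}(A_k)$ for $A_k=\{|\xi_k-\xi|\ge\var\}$, splits off these capacity-small sets, and tests fixed quasicontinuous cutoffs of $\xi$ against $\mu_{u_k}$ using \cite[Theorem 4.2.6]{GZ2}---while the uniformly convergent case is reduced to the decreasing one by replacing $u_k$ with $u_k+2^{-k}$ along a subsequence.
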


\begin{proof}
If needed, we drop finitely many $u_k$ to arrange that the remaining $u_k$ are uniformly bounded.
Upon adding a constant to the $u_k$ and scaling $u,u_k,\omega$, and $L$, we can even arrange that $0\le u, u_k\le 1$.
Suppose first the $u_k$ decrease.  Define decreasing functions $f,g$ on $[0,\mu_0(X)]$
\[
f=\liminf_{k\to\infty}\xi^\star_k\le \limsup_{k\to\infty}\xi^\star_k=g.
\]
Let $s\in (0,\mu_0(X))$, $S=\xi^\star(s)$, and $\varepsilon>0$. With $A_k=\{|\xi_k-\xi|\ge\var\}$, $k\in\bN$,
\[
\{\xi_k\ge S+\var\}\subset \{\xi\ge S\}\cup A_k\quad\text{ and }\quad
\{\xi>S-\var\}\subset\{\xi_k\ge S-2\var\}\cup A_k.
\]
For $j\in\bN$ define continuous functions $F_j,G_j:\bR\to[0,1]$
\[
F_j(t)=\begin{cases}0 \text{ if } t\le S-1/j\\1\text{ if } t\ge S\\ \, \text{linear in between},\end{cases}\qquad 
G_j(t)=\begin{cases}0 \text{ if } t\le S-\var\\1\text{ if } t\ge S-\var+1/j\\ \,\text{linear in between.}\end{cases}
\]
Note that $F_j$ increases, $G_j$ decreases with increasing $j$. We can estimate
\begin{equation}\begin{gathered}
\mu_{u_k}(\xi_k\ge S+\varepsilon)\le\mu_{u_k}(\xi\ge S)+\mu_{u_k}(A_k)\le \int_X F_j\circ\xi\,d\mu_{u_k}+\text{cap}\,(A_k)\\
\mu_{u_k}(\xi_k\ge S-2\var)\ge\mu_{u_k}(\xi>S-\varepsilon)-\mu_{u_k}(A_k)\ge 
\int_X G_j\circ\xi\,d\mu_{u_k}-\text{cap}\,(A_k).
\end{gathered}\end{equation}
Since $F_j\circ\xi$, $G_j\circ\xi$ are quasicontinuous, by \cite[Theorem 4.26, Proposition 4.25]{GZ2}
\[
\lim_{k\to\infty}\int_XF_j\circ\xi\,d\mu_{u_k}=\int_XF_j\circ\xi\,d\mu_u,\qquad
\lim_{k\to\infty}\int_XG_j\circ\xi\,d\mu_{u_k}=\int_XG_j\circ\xi\,d\mu_u.
\]
Therefore, letting first $k\to\infty$ in (3.5), then $j\to\infty$, and using the monotone convergence theorem as well,
\begin{align*}
\limsup_{k\to\infty}\mu_{u_k}(\xi_k\ge S+\var)&\le\mu_{u}(\xi\ge S)\\
\liminf_{k\to\infty}\mu_{u_k}(\xi_k\ge S-2\var)&\ge\mu_u(\xi>S-\var)\ge\mu_u(\xi\ge S).
\end{align*}

Now $\mu_u(\xi\ge S)=s$, see (3.2). Hence, given $\sigma<s<\rho$, for sufficiently large $k$
\[
\mu_{u_k}(\xi_k\ge S+\varepsilon)<\rho,\qquad
\sigma< \mu_{u_k}(\xi_k\ge S-2\varepsilon).
\] 
Let $\eta=\xi_k$ and apply (3.3) first with $\nu=\mu_{u_k}$,  $t=S+\varepsilon$, and 
$\tau=\rho$; then with $\nu=\mu_u$, $t=S-2\varepsilon$, and $\tau=\sigma$.
We conclude $\xi^\star_k(\rho)\le S+\var$ and 
$\xi^\star_k(\sigma)\ge S-2\var$. In the limit $k\to\infty$
\[
g(\rho)\le S+\var=\xi^\star(s)+\varepsilon\quad\text{and}\quad f(\sigma)\ge S-2\var=\xi^\star(s)-2\varepsilon,
\]
for all $\sigma<s<\rho$.
If $f,g$ are continuous at $s$---which occurs apart from countably many values---, $g(s)\le \xi^\star(s)\le f(s)\le g(s)$ follows, i.e., $\lim_k \xi^\star_k(s)=\xi^\star(s)$ as claimed.

It is now easy to finish the proof. With a measure preserving $\theta:(X,\mu_0)\to [0, \mu_0(X)]$, as in the proof of Theorem 3.2, $\xi_k\in B(X, \mu_{u_k})$ and $\xi^\star_k\circ \theta\in B(X,\mu_0)$ are equidistributed, and $\xi^\star_k\circ\theta\to \xi^\star\circ\theta$ $ \mu_0$--almost everywhere. Hence 
\[
\lim_k L(\xi_k)=\lim_k L(\xi^\star_k\circ\theta)=L(\xi^\star\circ\theta)=L(\xi).
\]
We are done if $u_k$ are known to decrease.

Now suppose that $u_k$ converge uniformly. It suffices to prove that a subsequence of $L(\xi_k)$ converges to $L(\xi)$, and for this reason we can assume that $\|u_k-u_{k-1}\| < 2^{-k}$ for $k=2,3,\dots$. Then the sequence $v_k=u_k+2^{-k}$ decreases to $u$, and $\mu_{u_k}=\mu_{v_k}$. We can view $\xi_k\in T^\infty_{u_k}\cE(\omega)\approx B(X, \mu_{u_k})$ as elements $\xi'_k\in T^\infty_{v_k} \cE(\omega)\approx B(X,\mu_{v_k})$, which are strict rearrangements of $\xi_k$. Hence $L(\xi_k)=L(\xi'_k)\to L(\xi)$ by the first part of the proof.
\end{proof}

Later on we will need a variant of Lemma 3.4 in which the condition on $\xi_k$ is relaxed.

\begin{defn}  
We will say that a family $N$  of finite Borel measures on $X$ is hereditarily tight if for every open $U\subset X$ the restrictions
$\nu|U$, $\nu\in N$ are tight, i.e., given $\var>0$, there is a compact $K\subset U$ such that $\nu(U\setminus K)<\var$
for all $\nu\in N$.
\end{defn}

For example, if all $\nu\in N$ 
are absolutely continuous with respect to a finite Borel measure $\mu$, and the Radon--Nikodym 
derivatives  $d\nu/d\mu$ are uniformly bounded, then $N$ is hereditarily tight. 

\begin{lem}   
Let $L:T^\infty\cE(\omega)\to\bR$ be strongly continuous, invariant, and convex, and
$u_k\in\cE(\omega)$ either decrease, or uniformly converge, to a bounded $u\in\cE(\omega)$ as $k\to\infty$. Suppose
that the family $\mu_{u_k}$, $k\in\bN$, is hereditarily tight.
If uniformly bounded $\xi_k\in T^\infty_{u_k}\cE(\omega)\approx B(X)$ converge $\mu_u$--almost everywhere to 
$\xi\in T^\infty_u\cE(\omega)\approx B(X)$, then $\xi_k^\star\to\xi^\star$ away from a countable subset of 
$[0,\mu_0(X)]$, and $\lim_k L(\xi_k)=L(\xi)$.
\end{lem}
\begin{proof} 
As in the previous proof, it suffices to argue when the $u_k$ decrease. Let $\var>0$. First we claim that there are
uniformly bounded $\eta_k\in T^\infty_{u_k}\cE(\omega)$ and continuous $\eta\in T^\infty_{u}\cE(\omega)$   such that 
$\eta_k\to\eta$ uniformly and for $\var\le s\le\mu_0(X)-\var$
\begin{equation} 
\eta^\star(s+\var)\le\xi^\star(s)\le\eta^\star(s-\var), \qquad  \eta_k^\star(s+\var)\le\xi_k^\star(s)\le\eta_k^\star(s-\var).
\end{equation}

Indeed, by the theorems of Lusin and Egorov there are an open $U\subset X$ and $\eta\in C(X)$ such that 
$\mu_u(U)<\var/2$, $\eta=\xi$ on $X\setminus U$, and $\xi_k\to\xi$ uniformly on $X\setminus U$. With $k_0$ to be 
determined in a moment,  let $\eta_k=\xi_k$ if $k<k_0$, 
\[
\eta_k=\begin{cases}\eta\quad&\text{on } U\\ \xi_k&\text{on } X\setminus U\end{cases}\qquad\text{if } k\ge k_0.
\] 
Thus $\eta_k\to\eta$ uniformly. Next, $\mu_u(\xi\neq\eta)\le\mu_u(U)<\var$. To estimate 
$\mu_{u_k}(\xi_k\neq\eta_k)$ we pick a compact $K\subset U$ such that $\mu_{u_k}(U\setminus K)<\var/2$ for all $k$ 
and a function $\zeta\in C(X)$ such that
$\chi_K\le\zeta\le\chi_{U}$ (characteristic function). This implies 
$\int_X(\chi_U-\zeta)\,d\mu_{u_k}\le\mu_{u_k}(U\setminus K)<\var/2$ for all $k$. 
Choose
$k_0$ so that
\[
\int_X\zeta\,d\mu_{u_k}<\var/2\qquad\text{if }k\ge k_0.
\]
This is possible, because, for example by \cite[Theorem 3.18]{GZ2}, as $k\to\infty$, the integral above tends to
$\int_X\zeta\,d\mu_u\le\mu_u(U)<\var/2$. Hence for $k\ge k_0$
\begin{equation}  
\mu_{u_k}(\xi_k\neq\eta_k)\le\mu_{u_k}(U)=\int_X(\chi_{U}-\zeta)\,d\mu_{u_k}+\int_X\zeta\,d\mu_{u_k}<\var,
\end{equation}
and the same holds for $k<k_0$ trivially.

We view
$\eta,\eta_k$ as vectors in $T^\infty_u\cE(\omega)$, $T^\infty_{u_k}\cE(\omega)$. According to (3.2)
$\mu_u(\xi\ge\xi^\star(s))=s$. Hence
\[
s-\var<\mu_u\big(\eta\ge\xi^\star(s)\big)< s+\var,
\]
and by (3.3) $\eta^\star(s+\var)\le\xi^\star(s)\le\eta^\star(s-\var)$. The second set of inequalities in (3.6) follows the same
way, using (3.7).

We can apply Lemma 3.4 and conclude that $\eta^\star_k\to\eta^\star$ away from a  countable set. Therefore
\[
\limsup_{k\to\infty}\xi_k^\star(s)\le\lim_{k\to\infty}\eta_k^\star(s-\var)=\eta^\star(s-\var)\le\xi^\star(s-2\var),
\]
for all $s\in[2\var,\mu_0(X)-2\var]$ except countably  many; and similarly 
$\liminf_{k\to\infty}\xi_k^\star(s)\ge \xi^\star(s+2\var)$. If in addition $\xi^\star$ is continuous at $s$, letting $\var\to 0$ 
through a sequence these inequalities prove
$\lim_{k\to\infty}\xi_k^\star(s)=\xi^\star(s)$, which therefore holds for all $s\in[0,\mu_0(X)]$ with countably many exceptions.

From this $\lim_k L(\xi_k)=L(\xi)$ follows the same way as in Lemma 3.4.
\end{proof}

\section{Weak geodesics, $\varepsilon$--geodesics, Jacobi fields}   

If $a<b$ are real numbers, we let
\[S_{ab}=\{s\in\bC: a<\text{Re}s<b\},\]
and denote by $\pi$ the projection $S_{ab}\times X\to X$. 

Following Berndtsson and Darvas \cite[section 3.3]{Be1, D4} we make the following definition.
\begin{defn}  
A path $u:(a,b)\to\psho$ is a subgeodesic if the function $U: S_{ab}\times X\to [-\infty,\infty)$ given by $U(s,x)= u(\text{Re}s)(x)$ is $\pi^\star\omega$--plurisubharmonic.

If $w, w'\in\psho$, the weak geodesic determined by (or connecting) $w, w'$ is $u:(a,b)\to\psho$, 
\begin{equation}   
u=\sup\{v | v:(a,b)\to\psho\text{ is subgeodesic, }\lim_{t\to a}v(t)\le w, \lim_{t\to b}v(t)\le w'\}. 
\end{equation}
\end{defn}
The limits are understood pointwise on $X$; they exist because $\pi^*\omega$--plurisubhar\-monicity
 implies that for each $x\in X$ the function $v(\cdot)(x)$ is convex. It is possible that (4.1) gives $u\equiv -\infty$, not valued 
 in $\psho$; but otherwise the weak geodesic is indeed a path in $\psho$ and is itself a subgeodesic \cite[section 3.1]{D4}. 
 Darvas points out 
that in general  the term ``connecting'' weak geodesic is misleading, as $\lim_a u$ may have little to do with $w$. But, if 
$w,w'$ are bounded, Berndtsson proves by a simple argument that the weak geodesic indeed connects, $\lim_a u=w$,  
$\lim_b u=w'$, uniformly on $X$, \cite[pp. 156-157]{Be1}. If $c<d$, 
the weak geodesic $v: (c,d)\to\psho$ between $w$ and $w'$ is $u$, composed with an affine reparametrization, 
because affine reparametrizations of subgeodesics yield subgeodesics.

In what follows we will only deal with weak geodesics $u$ determined by bounded $w,w'$. Such a 
$u$ is a Lipschitz map into
$B(X)$, and we will refer to its continuous extension to the closed interval $[a,b]$ as a weak geodesic, too.

In (1.1) we defined Mabuchi's connection on $T\cH$ through its parallel transport. A more direct definition takes a smooth 
path $u:[a,b]\to \cH$ and a smooth vector field $\xi:[a,b]\to T\cH$, $\xi(t)\in T_{u(t)}\cH$, along it; the covariant derivative of 
$\xi$ along $u$ is then the vector field $\na_t\xi$ given by
\begin{equation}     
\na_t\xi(t)=\dot\xi(t)-\frac{1}{2}(d_X\dot u(t), d_X\xi(t))_{u(t)}\in C^\infty(X)\approx T_{u(t)}\cH.
\end{equation}
Here $d_X$ is differential on $X$, for fixed $t$, and $(\ \ ,\ \ )_{u(t)}$ is inner product on $T^*X$ induced by the K\"ahler metric of $\omega_{u(t)}$. In (4.2) the left hand side is to be computed for $\xi$ a section of $T\cH$ along $u$; on the right $\xi$ stands for the representation of this section in the canonical trivialization $T\cH\approx \cH\times C^\infty(X)$, so for a function $[a,b]\to C^\infty(X)$; and the equality of the two sides again uses the trivialization of $T\cH$.

Geodesics $u:[a,b]\to \cH$ of $\na$ satisfy $\na_t\dot u(t)=0$. Chen, however, had the idea that the geometry of $\cH$ can be better accessed through $\varepsilon$--geodesics. Define a vector field $F$ on $\cH$ by
\begin{equation}   
F(v)\omega^n_v=\omega^n,\quad v\in \cH, \quad F(v)\in C^\infty(X)\approx T_v \cH.
\end{equation}
If $\varepsilon>0$, an $\varepsilon$--geodesic is a solution $u:[a,b]\to \cH$ of 
\begin{equation}   
\na_t\dot u(t)=\varepsilon F(u(t)), \quad t\in[a,b].
\end{equation}

In what follows, we will just write $d$ for $d_X$. Chen proves \cite{C, Bl1, Bl2}

\begin{thm}   
Given $a<b$ and two potentials
$w,w'\in \cH$, (4.4) has a unique $C^2$ solution  $u=u^\varepsilon: [a,b]\to \cH$ satisfying
$u(a)=w$, $u(b)=w'$. The solution $u$ is smooth, and as an element of $C^\infty([a,b]\to \cH)$, it depends smoothly on 
$w, w'$ (and $a, b,\varepsilon$). Finally, if $w,w'$ are in a fixed compact subset of $\cH$, the forms $dd^cu^\varepsilon(t)$, 
$d\dot u^\varepsilon(t)$ and $\ddot u^\var(t)$ are uniformly bounded on $X$ for $0<\varepsilon<\varepsilon_0$ and $a\le t\le b$.
\end{thm}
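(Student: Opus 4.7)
The plan, following Chen \cite{C}, is to recast the $\var$--geodesic equation (4.4) as a Dirichlet problem for a non--degenerate complex Monge--Amp\`ere equation on the strip $S_{ab}\times X$, and then invoke the elliptic theory for such problems, refined by B\l ocki \cite{Bl1,Bl2}. First I would lift a path $u:[a,b]\to\cH$ to $U(s,x)=u(\text{Re}\,s)(x)$ on $\overline{S_{ab}}\times X$. A direct calculation, using that $U$ is $\text{Im}\,s$--independent, shows that the geodesic operator $\ddot u-\tfrac12(d_X\dot u,d_X\dot u)_{u}$ equals, up to a positive multiplicative factor, the density $(\pi^\star\omega+dd^cU)^{n+1}/(\pi^\star\omega^n\wedge ids\wedge d\bar s)$. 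Hence (4.4) becomes
\[
(\pi^\star\omega+dd^cU)^{n+1}=c\,\var\,\pi^\star\omega^n\wedge ids\wedge d\bar s,
\]
with boundary data $U(a+i\tau,\cdot)=u_a$, $U(b+i\tau,\cdot)=u_b$; for $\var>0$ this is a uniformly elliptic equation on strictly $\pi^\star\omega$--plurisubharmonic $U$.

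After quotienting by $s\mapsto s+i$ this becomes a Dirichlet problem on the compact manifold with boundary $[a,b]\times S^1\times X$. Existence and uniqueness of a smooth, strictly plurisubharmonic solution $U$ then follow from the Caffarelli--Kohn--Nirenberg--Spruck scheme, i.e.\ the continuity method in $\var$ combined with the a priori estimates supplied by Chen and B\l ocki. By uniqueness and the $S^1$--symmetry of the boundary data, $U$ is automatically $\text{Im}\,s$--invariant and so descends to a smooth $\var$--geodesic $u^\var$. Uniqueness among $C^2$ solutions follows from the comparison principle for the concave operator $(\det)^{1/(n+1)}$ on positive Hermitian matrices. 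Smooth dependence on the parameters $(u_a,u_b,a,b,\var)$ is then an application of the implicit function theorem: for $\var>0$, the linearization of the Monge--Amp\`ere operator at a strictly plurisubharmonic solution is a uniformly elliptic second order operator with zero Dirichlet data, hence a topological isomorphism on the appropriate H\"older spaces.

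Finally, the uniform estimates for $0<\var<\var_0$ amount to Chen's a priori $C^{1,1}$ bound on $U$ with constants depending only on a compact subset of $\cH$ containing $u_a,u_b$. Uniform $C^0$ and boundary gradient bounds come from sandwiching $U$ between an explicit sub-- and supersolution built from $u_a,u_b$ corrected by a quadratic in $t$. The decisive $C^{1,1}$ bound I would obtain by applying the maximum principle to an auxiliary function of the form $\log\operatorname{tr}_{\pi^\star\omega}(\pi^\star\omega+dd^cU)-AU+B|U|^2$ in the interior, and by explicit barrier constructions at the boundary components $\text{Re}\,s=a,b$, as in \cite{C,Bl1}. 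Once $dd^cU$ is uniformly bounded, the three bounds in the statement follow at once: $dd^cu^\var(t)$ is the $X$--slice of $dd^cU$; the mixed components of $dd^cU$ control $|d\dot u^\var|^2_{u^\var}$ and thereby $d\dot u^\var(t)$; and from (4.4) one reads $\ddot u^\var=\tfrac12|d\dot u^\var|^2_{u^\var}+\var F(u^\var)$, with $F(u^\var)$ itself controlled by the $C^0$ and $dd^cU$ bounds. The hard part, as expected, is exactly this uniform $C^{1,1}$ estimate: preventing $dd^cU$ from blowing up as $\var\to 0$ with constants independent of $\var$. Everything else is either standard elliptic theory for Monge--Amp\`ere with boundary, or a direct consequence of the $C^{1,1}$ bound.
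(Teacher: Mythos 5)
The paper does not prove this theorem itself: it is quoted from Chen \cite{C} and B\l ocki \cite{Bl1, Bl2}, and your outline --- lifting to the strip $S_{ab}\times X$, reformulating (4.4) as the non--degenerate Monge--Amp\`ere Dirichlet problem $(\pi^\star\omega+dd^cU)^{n+1}=c\,\var\,\pi^\star\omega^n\wedge ids\wedge d\bar s$, continuity method plus a priori estimates for existence, comparison principle for uniqueness, implicit function theorem for smooth parameter dependence, and $\var$--independent $C^{1,1}$ bounds via barriers and the maximum principle --- is precisely the argument of those references. So your proposal is correct and takes essentially the same route as the proof the paper relies on.
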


It follows by the Arzel\`a--Ascoli theorem and a maximum principle for the Monge--Amp\`ere operator that for fixed $w,w'$
the uniform limit $u=\lim_{\varepsilon\to 0} u^\varepsilon$ exists. The limit $u$ maps into the space
\begin{equation}  
 \cH^{1\overline 1}=\{w\in C(X)\cap\psho: \text{ the current } dd^cw \text{ is bounded}\}, 
 \end{equation}
and the currents $dd^cu(t), d\dot u(t), \ddot u(t)$ are represented by uniformly bounded forms. Also, $u$ is the weak geodesic in the sense of Definition 4.1 to connect $w,w'$.

Consider an $\varepsilon$--geodesic $u:[a,b]\to \cH$. 

\begin{defn}   
A vector field $\xi:[a,b]\to T\cH$ along $u$ is an $\varepsilon$--Jacobi field if there are an interval $I$ containing $0\in\bR$ and a smooth family $I\ni s\mapsto u^s$, each $u^s:[a,b]\to \cH$ an $\varepsilon$--geodesic such that 
$u^0=u$ and $\xi=\partial_su^s |_{s=0}$.
\end{defn}

\begin{lem}   
(a)\  If $I\subset \bR$ is an interval and $v:I\to \cH$ a smooth path, then the covariant derivative of $F$ along $v$ satisfies
\begin{equation}   
\omega^n_{v(s)}\na_s F(v(s))=-n d\big(F(v(s)) d^c\dot v(s)\big)\wedge\omega^{n-1}_{v(s)}.
\end{equation}
(b)\ If $\xi:[a,b]\to T\cH$ is an $\varepsilon$--Jacobi field along an $\varepsilon$--geodesic $u:[a,b]\to \cH$, then
\begin{equation}   
\omega^n_{u(t)}\na^2_t\xi(t)=\frac{1}{4}\big\{\{\dot u(t),\xi(t)\}, \dot u(t)\big\}\omega^n_{u(t)}-
\varepsilon n d \big(F(u(t)) d^c\xi(t)\big)\wedge\omega^{n-1}_{u(t)},
\end{equation}
where $\{\, ,\, \}=\{\,  ,\, \}_{u(t)}$ is Poisson bracket on $T_{u(t)} \cH\approx C^\infty(X)$ for the symplectic form 
$\omega_{u(t)}$.
\end{lem}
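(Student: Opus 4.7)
Both parts reduce to direct computations: (a) unpacks the definitions of $F$ and $\na$, while (b) is the standard Jacobi-field derivation, fed by (a) at the end.

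For (a), I would use (4.2) to write $\na_s F(v(s)) = \partial_s F(v(s)) - \tfrac12(d\dot v(s), dF(v(s)))_{v(s)}$, and multiply by $\omega^n_{v(s)}$. Differentiating the defining identity $F(v)\,\omega^n_v = \omega^n$ in $s$, together with $\partial_s\omega^n_v = n\,dd^c\dot v\wedge\omega^{n-1}_v$, yields $(\partial_s F(v))\,\omega^n_v = -nF(v)\,dd^c\dot v\wedge\omega^{n-1}_v$. For the other term, invoke the K\"ahler identity $(df,dg)_v\,\omega^n_v = 2n\,df\wedge d^c g\wedge\omega^{n-1}_v$ together with the symmetry $df\wedge d^c g = dg\wedge d^c f$ valid for real $f, g$; this gives $\tfrac12(d\dot v, dF(v))_v\,\omega^n_v = n\,dF(v)\wedge d^c\dot v\wedge\omega^{n-1}_v$. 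Summing the two contributions and using Leibniz in the form $d(F(v)\,d^c\dot v) = dF(v)\wedge d^c\dot v + F(v)\,dd^c\dot v$ produces (4.5).

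For (b), differentiate the $\var$-geodesic equation $\na_t\dot u^s = \var F(u^s)$ with respect to $s$ at $s=0$. Viewing $(s,t)\mapsto u^s(t)$ as a two-parameter family in $\cH$, torsion-freeness of $\na$ gives $\na_s\dot u^s = \na_t\xi$, while commuting the two covariant derivatives introduces curvature: $\na_s\na_t\dot u^s = \na_t^2\xi + R(\xi,\dot u)\dot u$ (the bracket $[\partial_s,\partial_t]$ vanishes). The Mabuchi--Semmes curvature formula $R(\xi,\eta)\zeta = \tfrac14\{\{\xi,\eta\},\zeta\}_u$, combined with antisymmetry of the Poisson bracket, identifies $-R(\xi,\dot u)\dot u$ with $\tfrac14\{\{\dot u,\xi\},\dot u\}$. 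Thus $\na_t^2\xi = \tfrac14\{\{\dot u,\xi\},\dot u\} + \var\,\na_s F(u^s)|_{s=0}$. Multiplying by $\omega^n_u$ and applying part (a) to the one-parameter path $s\mapsto u^s(t)$ at fixed $t$ (whose $s$-velocity at $s=0$ is $\xi$) then rewrites $\omega^n_u\,\na_s F(u^s(t))|_{s=0}$ as $-n\,d(F(u)\,d^c\xi)\wedge\omega^{n-1}_u$, yielding (4.6).

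The principal technical point is fixing the sign and the constant $\tfrac14$ in the curvature formula for $\na$. One can either cite \cite{M, S}, where this is worked out, or rederive it from (4.2) by computing $\na_\xi\na_\eta\zeta - \na_\eta\na_\xi\zeta$ on sections of $T\cH$ that are constant in the canonical trivialization $T\cH\approx\cH\times C^\infty(X)$ (so that $[\xi,\eta]=0$) and tracking the $u$-dependence of the inner product $(\cdot,\cdot)_u$. The sign can be cross-checked against the well-known non-positivity of Mabuchi's sectional curvature, $\langle R(\xi,\eta)\eta,\xi\rangle_u\le 0$.
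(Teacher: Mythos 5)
Your proposal is correct and follows essentially the same route as the paper: part (a) by substituting $F\circ v$ into (4.2), differentiating $F(v)\omega_v^n=\omega^n$, using the pointwise identity $(df,dg)_v\,\omega^n_v=2n\,df\wedge d^cg\wedge\omega^{n-1}_v$ and Leibniz; part (b) by applying $\na_s$ to the $\var$--geodesic equation, using torsion-freeness and the Mabuchi--Semmes curvature formula $R(\xi,\eta)\zeta=\tfrac14\{\{\xi,\eta\},\zeta\}$ (which the paper likewise takes from \cite{M}, adjusting for conventions), and then feeding in part (a). The only nitpick is that the raw symmetry $df\wedge d^cg=dg\wedge d^cf$ fails at the level of $2$-forms (the $(2,0)+(0,2)$ parts are antisymmetric) and holds only after wedging with $\omega^{n-1}_v$, which is all that is needed here.
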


Calabi and Chen \cite[Section 2.3]{CC}  derive an equivalent equation for $\varepsilon$--Jacobi fields.

\begin{proof}
(a)\ We will apply (4.2) with $\xi=F\circ v$. Differentiating $F(v(s))\omega^n_{v(s)}=\omega^n$ with respect to $s$ gives
\[
\omega^n_{v(s)}\partial_s F(v(s))=-F(v(s))\partial_s(\omega+dd^c v(s))^n=
-n F(v(s))dd^c\dot v(s)\wedge\omega^{n-1}_{v(s)}.
\]
At the same time
\[
\big(d\dot v(s), dF(v(s))\big)_{v(s)} \omega^n_{v(s)}=2n d F(v(s))\wedge d^c\dot v(s)\wedge\omega^{n-1}_{v(s)},
\]
see e.g. \cite[p.103]{Bl2}. Substituting into (4.2) now gives (4.6).

(b)\ Let $u^s:[a,b]\to \cH$ be a smooth family of $\varepsilon$--geodesics such that $\xi=\partial_s u^s | _{s=0}$, and set $U(s,t)=u^s(t)$. As Mabuchi's connection is torsion free, $\na_s\partial_t U=\na_t\partial_s U$. The curvature of $\na$, evaluated on $\partial_s U(s,t), \partial_t U(s,t)\in T_{U(s,t)}\cH$ is an endomorphism of $T_{U(s,t)} \cH$ that acts on a vector field $\eta(s,t)$ by
\[R(\partial_sU, \partial_t U)\eta=(\na_s\na_t-\na_t\na_s)\eta=\big\{\{\partial_s U, \partial_t U\},\eta\big\} / 4,\]
see \cite[Theorem 4.3]{M}. (Mabuchi's formula does not contain the factor $1/4$, due to different conventions.)

We apply $\na_s$ to the $\varepsilon$--geodesic equation $\na_t\partial_t U(s,t)=\varepsilon F(U(s,t))$, to obtain at $s=0$
\begin{align*}
\varepsilon\na_sF(U)&=\na_s\na_t\partial_t U=R(\partial_s U, \partial_t U)\partial_t U+\na_t\na_s\partial_t U\\
&=(1/4)\big\{\{\partial_s U,\partial_t U\}, \partial_t U\big\} +\na_t\na_t\partial_s U
=(1/4)\big\{\{\xi,\dot u\},\dot u\big\}+\na^2_t\xi.
\end{align*}
Combining (4.6) with this, (4.7) follows.
\end{proof}

\section{The action}    

Consider an invariant convex Lagrangian $L:T\cH\to \bR$. If $u:[a,b]\to \cH$ is a piecewise $C^1$ path, its action is
\begin{equation}    
\cL(u)=\int^b_a L(\dot u(t)) dt.
\end{equation}
Depending on the nature of $L$, this can represent length or energy of a path, but in general it is neither. No mather what $L$, the integral (5.1) is that of a piecewise continuous function by Theorem 2.5, so that it exists as a Riemann integral.

For the purposes of this paper we must consider action for paths beyond $\cH$. The material developed in section 3 allows to define action for paths in the space $B(X)\cap \psho\subset\cE(\omega)$ of bounded $\omega$--plurisubharmonic functions. This is a subset of the Banach space $B(X)$ and, viewing maps into it as maps into $B(X)$, we can talk about various regularity classes of such maps. If $a<b$ are real, the following is easy to check.
\begin{lem}  
A map $u:[a,b]\to B(X)$ is continuous if and only if the functions
\begin{equation}   
 u(\cdot)(x),\quad\text{for } x\in X
\end{equation}
are equicontinuous, and it is $C^k$ for $k=1,2,\dots $ if and only if, in addition, the functions in (5.2) are $k$ times
differentiable, and the $k$'th derivatives are also equicontinuous.
\end{lem}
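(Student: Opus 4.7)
The underlying principle is simple: $B(X)$ carries the sup norm, so continuity (respectively, differentiability) of $u:[a,b]\to B(X)$ in the Banach-space sense amounts to pointwise continuity (resp., differentiability) of each $u(\cdot)(x)$ \emph{together with} a uniformity in $x$ of the estimates, and ``uniformity in $x$'' is exactly equicontinuity of the indexed family.

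First I would handle the $k=0$ assertion. Since $[a,b]$ is compact and $B(X)$ is a metric space, continuity of $u$ is equivalent to uniform continuity, i.e.,
\[\forall\var>0\ \exists\delta>0\ \forall t,s\in[a,b]:|t-s|<\delta\Rightarrow\sup_{x\in X}|u(t)(x)-u(s)(x)|<\var.\]
Interchanging the quantifier on $x$ with the $(\var,\delta)$ pair is cost-free, and the resulting statement is the very definition of equicontinuity of the family $\{u(\cdot)(x)\}_{x\in X}$ on $[a,b]$.

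For $k=1$, the forward direction is easy: the Fr\'echet derivative $u'(t)\in B(X)$, being the norm-limit of $(u(t+h)-u(t))/h$, is also the pointwise limit, so $\partial_t u(t)(x)=u'(t)(x)$ exists, and continuity of $u':[a,b]\to B(X)$ translates via the $k=0$ case into equicontinuity of these derivatives. For the converse, suppose $v(t)(x):=\partial_t u(t)(x)$ exists for every $x$ and the family $\{v(\cdot)(x)\}_{x\in X}$ is equicontinuous. The mean value theorem in $t$ gives, for some $\tau_{x,h}$ between $t_0$ and $t_0+h$,
\[\frac{u(t_0+h)(x)-u(t_0)(x)}{h}-v(t_0)(x)=v(\tau_{x,h})(x)-v(t_0)(x),\]
and equicontinuity forces the right-hand side to tend to $0$ uniformly in $x$ as $h\to 0$. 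Hence the difference quotients converge in $B(X)$; the limit function is $v(t_0)$, which consequently lies in $B(X)$ as a uniform limit of bounded Borel functions. Continuity of $u'$ as a map into $B(X)$ then follows from the equicontinuity hypothesis by another application of the $k=0$ case.

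The general $k$ case is a routine induction: $u\in C^k$ if and only if $u\in C^1$ and $u'\in C^{k-1}$, and one applies the induction hypothesis to $u'$, which by the $k=1$ step has $u'(\cdot)(x)=\partial_t u(\cdot)(x)$. Equicontinuity of the top-order derivatives in itself already yields equi-Lipschitz, hence equicontinuous, lower-order derivatives via the mean value theorem, so no extra hypothesis at lower orders is required. I do not foresee any serious obstacle; the only point requiring a moment's care is the membership $v(t_0)\in B(X)$ in the converse direction of $k=1$, and this is settled automatically by uniform convergence of bounded Borel functions.
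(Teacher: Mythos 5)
The paper does not actually prove this lemma---it is introduced with ``the following is easy to check''---so there is no argument of the author's to compare yours against; I will just assess what you wrote. Your cases $k=0$ and $k=1$ are correct and complete: identifying continuity into $(B(X),\|\cdot\|)$ with equicontinuity of the family $\{u(\cdot)(x)\}_{x\in X}$ (pointwise and uniform equicontinuity coincide on the compact interval $[a,b]$), and then using the mean value theorem to write the difference quotient minus $v(t_0)(x)$ as $v(\tau_{x,h})(x)-v(t_0)(x)$, is exactly the right mechanism; you also correctly dispose of the membership $v(t_0)\in B(X)$ as a uniform limit of bounded Borel functions.

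The induction step for $k\ge 2$ contains a genuine gap. You claim that ``equicontinuity of the top-order derivatives in itself already yields equi-Lipschitz, hence equicontinuous, lower-order derivatives via the mean value theorem.'' The mean value theorem gives $|\partial_t^{k-1}u(t)(x)-\partial_t^{k-1}u(s)(x)|=|\partial_t^{k}u(\tau)(x)|\,|t-s|$, so equi-Lipschitz continuity of the $(k-1)$st derivatives requires the family $\{\partial_t^{k}u(\cdot)(x)\}_{x\in X}$ to be \emph{uniformly bounded}, and an equicontinuous family on $[a,b]$ need not be uniformly bounded (the constant functions $c_x$ with $\sup_x c_x=\infty$ are equicontinuous). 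So the step as stated fails. It can be repaired, but only by bringing in a hypothesis you have not used at that point: that $u(t)\in B(X)$, i.e.\ $\sup_x|u(t)(x)|<\infty$, for each fixed $t$. Indeed, Taylor's formula about a point $t_0$ together with equicontinuity of the top derivatives shows that $u(t)(x)$ differs from the degree-$k$ Taylor polynomial $\sum_{j\le k}\partial_t^{j}u(t_0)(x)(t-t_0)^j/j!$ by a quantity bounded uniformly in $x$ and $t$; evaluating at $k+1$ distinct points of $[a,b]$ and inverting the Vandermonde matrix then bounds all coefficients, hence all intermediate derivatives at $t_0$, uniformly in $x$. With that uniform bound your mean-value argument does yield equicontinuity of the lower-order derivatives, and the induction closes.
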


According to Theorem 3.2, an invariant convex Lagrangian $T\cH\to\bR$ that is strongly continuous on the fibers 
determines a strongly continuous, invariant, convex Lagrangian $L:T^\infty\cE(\omega)\to \bR$. Suppose
$u:[a,b]\to B(X)\cap\psho$ is a $C^1$ path. Since $\omega$--plurisubharmonic functions are quasicontinuous
\cite[Theorem 3.5]{BT}, \cite[Corollary 9.12]{GZ2}, the difference quotients $(u(t)-u(s))/(t-s)$ are quasicontinuous and 
so are their uniform limits
$\dot u(t)$. Hence
 by Lemma 3.4 $L\circ\dot u: [a,b]\to\bR$ is continuous. Clearly
 if $u$ is just piecewise $C^1$, the integral in (5.1) still exists as the integral of a piecewise continuous function, and 
 defines action $\cL(u)$.
 
 If $w,w'\in B(X)\cap \psho$ and $T\in(0,\infty)$, we define the least action, or just action, $\cL_T(w, w')$ between $w,w'$ as
\begin{equation}   
\cL_T(w,w')=\inf_u \cL(u),
\end{equation}
the infimum taken over all piecewise $C^1$ paths $u:[0, T]\to B(X)\cap \psho$ such that $u(0)=w$, $u(T)=w'$.
Note that $w,w'$ can be connected by a smooth path, e.g. $u(t)=(1-t/T)w+(t/T)w'$ connects. We will see that 
$\cL_T(w,w')>-\infty$ (Lemma 9.4). 

Instead of $[0,T]$ if we minimize over paths $[a,a+T]\to B(X)\cap\psho$, the infimum in (5.3) does not change. However, 
in general $\cL_T(w,w')$ will depend on $T$; it will not if $L$ is positively homogeneous. In general 
\[\cL_T(w,w')+\cL_S(w',w'')\ge\cL_{T+S}(w,w'')\]
follows by concatenating paths. Of course, $\cL_T(w,w')=\cL_T(w', w)$ should be expected only if $L$ is even, $L(-\xi)=L(\xi)$.

In the two results below, $L:T^\infty\cE(\omega)\to\bR$ is a strongly continuous, invariant, convex Lagragian.

\begin{thm}[Principle of least action]    
If a $C^1$ path $v:[0,T]\to B(X)\cap\psho$ is a weak geodesic, then $\cL(v)=\cL_T(v(0), v(T))$.  
\end{thm}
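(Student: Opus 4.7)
I would mimic the Calabi--Chen--Darvas minimization strategy, adapted to general invariant convex Lagrangians via the representation theorem (Theorem 2.6). The first move is reduction by approximation. Replace $v$ by smooth $\var$-geodesics $v^\var$ with the same endpoints using Theorem 4.2, so that $v^\var\to v$ uniformly on $[0,T]\times X$ and $\dot v^\var$ stays uniformly bounded. Replace any piecewise $C^1$ competitor $u\colon[0,T]\to B(X)\cap\psho$ by smooth paths $u^\delta$ valued in $\cH$, obtained by standard $\omega$-psh regularization (e.g., adding a small strictly $\omega$-plurisubharmonic smooth perturbation and convolving), with a short endpoint-correction interval. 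Lemma 3.4 then gives $\cL(v^\var)\to\cL(v)$ and $\cL(u^\delta)\to\cL(u)$, so it suffices to show $\cL(u^\delta)\ge\cL(v^\var)-o(1)$ for smooth paths in $\cH$ with matching endpoints, uniformly as $\var,\delta\to 0$.

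Next, I linearize $L$ via Theorem 2.6, writing $L(\xi)=\sup_{(a,f)\in\cA_u}(a+\int_X f\xi\,\omega^n_u)$. The central identity comes from the fact that Mabuchi's Riemannian metric is parallel for $\na$, combined with the $\var$-geodesic equation $\na_t\dot v^\var=\var F(v^\var)$: for any $(a,f)\in\cA_{v^\var(0)}$, if $f_t$ denotes its parallel transport along $v^\var$ (so $(a,f_t)\in\cA_{v^\var(t)}$ by the holonomy-invariant part of Theorem 2.6), then
\[
\frac{d}{dt}\int_X f_t\,\dot v^\var(t)\,\omega^n_{v^\var(t)}
=\int_X f_t\,\na_t\dot v^\var(t)\,\omega^n_{v^\var(t)}
=\var\int_X f_t\,\omega^n,
\]
which is uniformly $O(\var)$. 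Thus the affine functional $t\mapsto a+\int_X f_t(\cdot)\,\omega^n_{v^\var(t)}$ has nearly time-independent value along $v^\var$, so its integral over $[0,T]$ depends only on the initial value plus an $O(\var)$ error.

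To bring this lower bound over to the competitor path, use the rearrangement-invariance part of Theorem 2.6: for each $t$, produce a symplectomorphism $\psi_t\colon(X,\omega_{u^\delta(t)})\to(X,\omega_{v^\var(t)})$ via Moser's trick between these cohomologous forms, so that $(a,f_t\circ\psi_t)\in\cA_{u^\delta(t)}$. Then
\[
\cL(u^\delta)\ge\int_0^T\Bigl(a+\int_X f_t(\dot u^\delta(t)\circ\psi_t^{-1})\,\omega^n_{v^\var(t)}\Bigr)dt,
\]
and since $f_t$ is parallel along $v^\var$, the integrand on the right is nearly conserved; coupled with $u^\delta(0)=v^\var(0)$ and $u^\delta(T)=v^\var(T)$, an integration-by-parts/boundary-term analysis identifies the right-hand side with $\int_0^T\ell_t(\dot v^\var(t))\,dt+O(\var)$ for the same affine functional $\ell_t$. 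Running this lower bound over a countable dense family of parallel affine functionals and taking the supremum, combined with near-optimal selection at each $t$, yields $\cL(u^\delta)\ge\cL(v^\var)-o(1)$.

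The hardest step is reconciling two competing demands: recovering $\cL(v^\var)=\int_0^T L(\dot v^\var(t))\,dt$ as an integral of near-optimal affine functionals requires $t$-dependent choices $(a_t,f_t)\in\cA_{v^\var(t)}$, whereas the null-Lagrangian identity above requires $(a,f_t)$ to be parallel along $v^\var$ (a single-parameter choice). One resolves this either by a measurable-selection / Lusin-type argument producing approximately parallel families, or by a countable-supremum scheme exploiting the monotone convergence theorem; in both cases Lemma 4.4 and the equi-Lipschitz estimate of Lemma 3.3 provide the quantitative control needed to close the argument before passing $\var,\delta\to 0$. Controlling the $t$-derivative of the Moser flow $\psi_t$ against $\partial_t(\omega_{v^\var(t)}-\omega_{u^\delta(t)})$ is an additional technical input, tractable because both forms remain in a fixed bounded set of K\"ahler metrics.
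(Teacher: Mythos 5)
There is a genuine gap, and it is in the core comparison step. Your outer framework (regularize the competitor into $\cH$, replace the weak geodesic by $\var$--geodesics, pass to the limit with Lemma 3.4, and linearize $L$ through the representation theorem --- which is Theorem 2.4, not 2.6) matches the paper's scheme. But the mechanism you propose for comparing the two paths is a calibration by parallel affine functionals, and it does not close. The functional you transport to the competitor, $\xi\mapsto a+\int_X(f_t\circ\psi_t)\,\xi\,\omega^n_{u^\delta(t)}$, depends on $t$ and on the Moser flow $\psi_t$, hence on the competitor path itself; it is not a well-defined $1$--form on $\cH$, and there is no reason its action integral $\int_0^T\int_X(f_t\circ\psi_t)\,\dot u^\delta(t)\,\omega^n_{u^\delta(t)}\,dt$ is path-independent up to $O(\var)$ --- differentiating it produces terms in $\partial_t\psi_t$ and $\partial_t\omega^n_{u^\delta(t)}$ that do not cancel for nonconstant $f$. (For $f\equiv\mathrm{const}$ one recovers the Monge--Amp\`ere energy, which is exact; that is the only case where your ``integration-by-parts/boundary-term analysis'' works.) More fundamentally, your argument uses only the first-order identity $\na_t\dot v^\var=\var F(v^\var)$, i.e.\ only that the geodesic is a critical point of the action. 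Criticality alone cannot give minimality in a curved space; some second-order, sign-of-curvature input is indispensable, and your scheme never invokes one.

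That missing input is exactly what the paper supplies. The engine is Theorem 6.1: $L$ is convex along $\var$--Jacobi fields. Its proof (Lemma 6.2) takes the supporting affine functional $A(t)=\int_X f(t)\xi(t)\,\omega^n_{u(t)}$ with $f(t)$ parallel and $f_0$ chosen \emph{similarly ordered} with $\xi(0)$, and shows $\ddot A\ge 0$ using the Jacobi equation: both the curvature term $\tfrac14\{\{\dot u,\xi\},\dot u\}$ and the $\var$--term pair nonnegatively against a similarly ordered $f_0$ (Lemma 6.3). The comparison with the competitor is then made not by calibration but by a geodesic fan: for each $s$, join $v(0)$ to $u(s)$ by an $\var$--geodesic $U(s,\cdot)$, apply the convexity of $L$ along the Jacobi fields $\partial_sU(s,\cdot)$, and integrate in $s$ using Jensen's inequality (Lemma 7.2, after reducing to positively homogeneous $L$ via Theorem 2.4). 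Your ``central identity'' along $v^\var$ is correct but is not where the difficulty lies; the similarly ordered maximizer and the convexity along Jacobi fields are the ideas your proposal lacks, and without them the argument does not go through. A secondary point: Theorem 4.2 produces $\var$--geodesics only for endpoints in $\cH$, so the endpoints must be regularized first and both paths controlled under that regularization, as in Lemmas 8.2--8.4.
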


It is not hard to show that piecewise $C^1$ geodesics in $B(X)\cap\psho$ are automatically $C^1$, and by Chen's work,
Theorem 4.2, weak geodesics connecting points in $\cH$  are also $C^1$. Nonetheless, by Example 5.4 below,
general weak geodesics in 
$B(X)\cap\psho$ are not even left or right differentiable, and it is dubious if action along such paths can be defined by 
an integral.

\begin{thm}    
If $u,v:[a,b]\to B(X)\cap \psho$ are weak geodesics, then for $S>0$
the function
\[ 
[a,b]\ni t\mapsto \cL_S(u(t),v(t))\in\bR
\]
is convex.
\end{thm}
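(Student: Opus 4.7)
Fix $t_0<t_2$ in $[a,b]$, $\lambda\in(0,1)$, and $t_\lambda=(1-\lambda)t_0+\lambda t_2$; the goal is the four-point inequality
\[
\cL_S\bigl(u(t_\lambda),v(t_\lambda)\bigr)\le(1-\lambda)\cL_S\bigl(u(t_0),v(t_0)\bigr)+\lambda\cL_S\bigl(u(t_2),v(t_2)\bigr).
\]
Given $\delta>0$, choose piecewise $C^1$ near minimizers $\alpha_i\colon[0,S]\to B(X)\cap\psho$ connecting $u(t_i)$ to $v(t_i)$ for $i=0,2$, with $\cL(\alpha_i)\le\cL_S(u(t_i),v(t_i))+\delta$.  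For each $s\in[0,S]$ let $\beta_s\colon[t_0,t_2]\to B(X)\cap\psho$ be the weak geodesic joining $\alpha_0(s)$ and $\alpha_2(s)$, and set $\alpha_\lambda(s):=\beta_s(t_\lambda)$.  Since $u|_{[t_0,t_2]}$ and $v|_{[t_0,t_2]}$ are themselves the weak geodesics between their endpoints, uniqueness (inherent in the supremum definition 4.1) forces $\beta_0=u|_{[t_0,t_2]}$ and $\beta_S=v|_{[t_0,t_2]}$, so $\alpha_\lambda(0)=u(t_\lambda)$, $\alpha_\lambda(S)=v(t_\lambda)$, and after a small regularization $\alpha_\lambda$ becomes a competitor for $\cL_S(u(t_\lambda),v(t_\lambda))$.

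The desired inequality reduces to $\cL(\alpha_\lambda)\le(1-\lambda)\cL(\alpha_0)+\lambda\cL(\alpha_2)+O(\delta)$, which in turn follows, upon integration in $s$, from the pointwise convexity of $t\mapsto L(\partial_s\beta_s(t))$ on $[t_0,t_2]$ for every fixed $s$.  This is the statement that $L$ is convex along the ``Jacobi variation field'' $\partial_s\beta_s$ of the weak geodesic $\beta_s$.  To establish it, I would first smoothly approximate $\alpha_0,\alpha_2$ by paths in $\cH$ and replace $\beta_s$ by a smooth family of $\varepsilon$-geodesics $\beta^\varepsilon_s\colon[t_0,t_2]\to\cH$ using Theorem 4.2, so that $\xi^s(t):=\partial_s\beta^\varepsilon_s(t)$ becomes a genuine $\varepsilon$-Jacobi field satisfying (4.6); the strong continuity of Lemma 3.4 and the Lipschitz bound of Lemma 3.3 then allow passage to the limit $\varepsilon\to 0$.

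The core of the argument is in the smooth geodesic case, $\varepsilon=0$.  Write $w=\beta^0_s$ and let $P_t\colon T_{w(t_0)}\cH\to T_{w(t)}\cH$ denote parallel transport, realized by (1.1) as composition with a symplectomorphism $(X,\omega_{w(t)})\to(X,\omega_{w(t_0)})$.  Put $\eta(t):=P_t^{-1}\xi^s(t)$; Theorem 2.2 gives $L(\xi^s(t))=L(\eta(t))$.  The Jacobi equation $\na_t^2\xi^s=\tfrac14\{\{\dot w,\xi^s\},\dot w\}$, combined with the parallelism of $\dot w$ (so $P_t^{-1}\dot w(t)=\dot w(t_0)=:H$) and the preservation of Poisson brackets by symplectomorphisms, transcribes to
\[
\ddot\eta(t)=-\tfrac14 X_H^2\eta(t),
\]
where $X_H$ is the Hamiltonian vector field of $H$ on $(X,\omega_{w(t_0)})$.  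The flow $\varphi_\sigma=\exp(\sigma X_H)$ is complete on the compact manifold $X$, so $h(t',\sigma;x):=\eta(2t')(\varphi_\sigma(x))$ is defined on $[t_0/2,t_2/2]\times\bR\times X$; a direct chain-rule calculation gives $\partial_{t'}^2 h+\partial_\sigma^2 h=0$, i.e., $h$ is harmonic in $(t',\sigma)$ at each $x\in X$.  Strict rearrangement invariance of $L$ then yields $L(h(t',\sigma;\cdot))=L(\eta(2t'))$, independent of $\sigma$.  Applying Jensen's inequality to the mean-value property of $h$ and the fiberwise convexity of $L$ shows that $(t',\sigma)\mapsto L(h(t',\sigma;\cdot))$ satisfies the sub-mean-value inequality characteristic of subharmonic functions; being independent of $\sigma$, it must be convex in $t'$.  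Hence $t\mapsto L(\eta(t))=L(\xi^s(t))$ is convex on $[t_0,t_2]$.

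I expect the main obstacle to be the $\varepsilon>0$ bookkeeping.  Along an $\varepsilon$-geodesic $\dot w$ is no longer parallel, $\na_t\dot w=\varepsilon F(w)$, so $P_t^{-1}\dot w(t)$ drifts at rate $\varepsilon$ and the extra term $-\varepsilon n d(F(w)d^c\xi)\wedge\omega^{n-1}_w/\omega^n_w$ in (4.6) spoils the clean equation for $\ddot\eta$ and hence the exact harmonicity of $h$.  Using the uniform bounds of Theorem 4.2 on $dd^cu^\varepsilon$, $d\dot u^\varepsilon$, and $\ddot u^\varepsilon$ together with the Lipschitz estimate (3.4), one should show that the sub-mean-value inequality for $L\circ h$ holds modulo an error vanishing with $\varepsilon$, so that $t\mapsto L(\xi^s(t))$ is convex up to an $o_\varepsilon(1)$ correction.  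Integrating in $s$ and then sending $\varepsilon\to 0$ and $\delta\to 0$ completes the proof.
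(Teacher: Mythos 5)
Your overall skeleton --- a two-parameter family interpolating between $u$ and $v$, with convexity of $L$ along the Jacobi fields of the interpolating geodesics yielding the four-point inequality, followed by $\varepsilon\to 0$ and endpoint approximation --- is the same as the paper's (Lemma 9.2 there is exactly this, with the roles of the two directions swapped: the paper takes the longitudinal curves to be $\varepsilon$-geodesics interpolating $u$ and $v$ and the transversal curves to be $\delta$-geodesics, invoking the least-action theorem to identify their action with $\cL_S$). The genuine gap is in your core step, the convexity of $t\mapsto L(\xi^s(t))$. Your harmonicity argument --- transporting the Jacobi equation to $\ddot\eta=-\tfrac14 X_H^2\eta$, building $h(t',\sigma)=\eta(2t')\circ\varphi_\sigma$, and combining rearrangement invariance with Jensen and the mean-value property --- is correct and rather elegant, but only for $\varepsilon=0$, i.e.\ for honest geodesics in $\cH$, which in general do not exist between the points you need to join. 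For $\varepsilon>0$, which is the only case available, you concede that harmonicity fails and propose to recover convexity ``up to $o_\varepsilon(1)$.'' This is the missing piece: the error term $-\varepsilon n\,d\big(F(u)d^c\xi\big)\wedge\omega^{n-1}_u$ in (4.6) involves \emph{second} derivatives of the Jacobi field $\xi^s$, and Theorem 4.2 gives uniform bounds only on $dd^cu^\varepsilon$, $d\dot u^\varepsilon$, $\ddot u^\varepsilon$ --- not on $\partial_s$-derivatives of the family --- so the claimed uniform $o_\varepsilon(1)$ control is not available from the tools at hand. The paper's Theorem 6.1 avoids this entirely: writing $L$ as a supremum of affine functionals $a+\int_X f\xi\,d\mu_u$ with $f$ \emph{similarly ordered} to $\xi(t_0)$ (Theorem 2.4, Lemma 6.3), one finds that both the curvature term and the $\varepsilon$-term contribute \emph{nonnegatively} to $\ddot A(t_0)$, so convexity along $\varepsilon$-Jacobi fields is exact, with no $O(\varepsilon)$ error --- a point the paper flags explicitly in Section 5. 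Your global harmonicity trick does not see the favorable sign of the $\varepsilon$-term, and without it the limiting argument is not justified.

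A second, smaller but real omission: to pass from smooth data to weak geodesics $u,v$ valued in $B(X)\cap\psho$, you must know that $\cL_S(w,w')$ is finite and varies continuously when $w,w'$ are replaced by decreasing (or uniformly convergent) approximants in $\cH$; this is the paper's Lemma 9.3, whose proof (splicing short linear connectors onto the approximating weak geodesics and using the equi-Lipschitz property of $L$) is not a formality. Your regularization of $\alpha_0,\alpha_2$ changes the endpoints of the $\beta_s$, destroying the identities $\beta_0=u|_{[t_0,t_2]}$, $\beta_S=v|_{[t_0,t_2]}$, and only a statement of Lemma 9.3 type restores the inequality in the limit. Relatedly, $s\mapsto\beta_s(t_\lambda)$ is not obviously a piecewise $C^1$ competitor; the paper sidesteps this by working with the smooth family $U(s,t)$ of Theorem 4.2 throughout.
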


If $L$ is absolutely homogeneous, $L(c\xi)=|c| L(\xi)$, and vanishes only on zero vectors, then
action $\cL_S$ is distance measured in a Finsler metric and is independent of $S$; the statement of Theorem 5.3 is an indication of seminegative curvature.

The proofs will take sections 6--9. First we prove approximate versions, with $\varepsilon$--geodesics in $\cH$ replacing 
weak geodesics. The approximate versions depend on two facts. First, that $L$ is convex along $\varepsilon$--Jacobi 
fields; second, as a consequence, 
a triangle type inequality holds for triangles in $\cH$ with two sides $\varepsilon$--geodesics (Theorem 6.1, Lemma 7.2). It 
is a technical point but noteworthy that the approximate results contain no error term, no $O(\varepsilon)$. By letting 
$\varepsilon\to 0$ we obtain a principle of least action in $\cH^{1\bar 1}$ (Corollary 7.4). Approximating weak geodesics in 
$B(X)\cap\psho$ by weak geodesics in $\cH^{1\bar 1}$ we obtain the same in $B(X)\cap \psho$ in section 8. Theorem 5.3 is proved by the same approximation scheme in section 9.

To conclude this section we discuss smoothness of weak geodesics $v:[a,b]\to B(X)\cap\psho$. For fixed $x\in X$, the
function  $v(\cdot)(x)$ is convex, hence has left and right derivatives at every $t\in(a,b)$, that we will denote 
$\partial^\mp_t v(t)(x)$. These one sided derivatives are bounded, since $v$ is Lipschitz according to Berndtsson 
\cite[section 2.2]{Be1}, see Lemma 9.3. Let $(X,\omega)$ be complex projective space $\bP_n$ with 
the Fubini--Study metric.

\begin{example} 
There is a weak geodesic $v:[a,b]\to B(X)\cap\psho$ that is not left or right differentiable (as a map into
$B(X)$) at any $t\in[a,b]$. Furthermore, for each $t\in(a,b)$ the derivatives 
$\partial^-_tv(t)$ and $\partial^+_tv(t)$, computed pointwise on $X$, differ $\mu_{v(t)}$ almost everywhere. 
\end{example}

Our $v$ will be a toric weak geodesic. Such weak geodesics can be obtained using an extension of Guan's
correspondence between toric geodesics and linear geodesics in a  suitable Hilbert space $L^2(P)$, 
see \cite{G}. Nevertheless, since the correspondence 
between properties of toric weak geodesics and linear geodesics is less straightforward than for geodesics
in $\cH$, we will just write down one possible example and verify its character directly. 

We view $\bP_n$ as $\bC^n\cup\bP_{n-1}$, and denote by $|\ |$ Euclidean norm on $\bC^n$. 
Keeping in mind that  $\omega=dd^c\log(1+|x|^2)$ on $\bC^n$, we
define $v:[a,b]\to C(X)\cap\psho$ by
\begin{equation} 
v(t)(x)=\begin{cases}2\max(t,\log|x|)-\log(1+|x|^2),\quad&\text{if } t\in[a,b],\, x\in\bC^n\\
0,&\text{if }t\in[a,b],\,x\in\bP_{n-1}.\end{cases}
\end{equation}
In fact $v(t)$ is smooth in a neighborhood of $\bP_{n-1}$. To check that $v$ is a weak geodesic, let
$S_{ab}=\{s\in\bC:a<\text{Re}s<b\}$ as in section 
4 and
\[
V(s,x)=\begin{cases}2\max(\text{Re}s,\log|x|)-\log(1+|x|^2), \quad&\text{if } s\in \bar S_{ab}, \,x\in\bC^n\\
0, &\text{if }s\in \bar S_{ab},\, x\in\bP_{n-1}.\end{cases}
\]
Thus $V$ is $\pi^*\omega$--plurisubharmonic on $S_{ab}\times X$, i.e., $v$ is a subgeodesic. 
Suppose $u:[a,b]\to B(X)\cap\psho$ is the weak geodesic connecting $v(a), v(b)$, 
so that $v\le u$. Let $U(s,x)=u(\text{Re} s)(x)$ for $s\in \bar S_{ab}$, $x\in X$. Since
$u(a)=v(a)$ and $u(b)=v(b)$ are continuous, $U$ is continuous at points in $\partial S_{ab}\times X$. If $y\in X$,
consider the map 
\[
\phi_y:\bar S_{ab}\ni s\mapsto (s,e^sy)\in \bar S_{ab}\times X,
\] 
with the understanding that $e^sy=y$ if $y\in\bP_{n-1}$. The pull back $\phi_y^*V$ is smooth, and 
$dd^c\phi_y^*V=2dd^c(\text{Re}s+\max(0,\log|y|))-\phi_y^*\omega=-\phi^*_y\omega$. Hence the bounded function
$\phi_y^*(U-V)$ is subharmonic on $S_{ab}$, and by the maximum principle it is $\le 0$. As $y$ ranges over $X$,
the strips $\phi_y(S_{ab})$ foliate $X$. Therefore $U\le V$. Along with $v\le u$ this implies that 
$v=u$ is indeed a weak geodesic.

For fixed $x\in X$ one computes from (5.4) the one sided derivatives of $v(\cdot)(x)$,
\begin{equation} 
\partial^-_t v(t)(x)=\begin{cases} 2&\text{if } t>\log|x|\\ 0&\text{otherwise},\end{cases}\qquad
\partial^+_t v(t)(x)=\begin{cases} 2&\text{if } t\ge\log|x|\\ 0&\text{otherwise}.\end{cases}
\end{equation}
On the one hand, for each $t$ these are discontinuous functions on $X$; on the other, each difference quotient of $v$ is
continuous. Hence $\partial^\pm_tv(t)$ cannot be the limit, in $B(X)$, of difference quotients, and so $v$ as a map into 
$B(X)$ has no one sided derivatives.

Finally, for each $t$ (5.4) gives that the measure induced by $\omega_{v(t)}^n$ is invariant under U$(n)$ rotations of $X$ 
and is supported  on the sphere $|x|=e^t$. Hence it is a nonzero multiple of area measure on that sphere; by (5.5) therefore 
$\mu_{v(t)}$--almost everywhere $\partial^-_tv(t)\neq \partial^+_tv(t)$.

The example shows that action even along a weak geodesics in $B(X)\cap\psho$ cannot be defined by the integral (5.1), since 
the integral depends on whether $\dot u(t)$ is interpreted as left or right derivative. Perhaps the correct 
interpretation is the average of the two; this is what Theorem 10.1 and Lemma 10.2 seem to suggest.

\section{Divergence of $\varepsilon$--Jacobi fields}   

In this section we stay in $\cH$, and consider invariant convex Lagrangians $L:T\cH\to\bR$, that are just continuous. 
Recall that given $\varepsilon>0$, an $\varepsilon$--geodesic $u:[a,b]\to \cH$ satisfies the equation
\begin{equation}    
  \na_t\dot u(t)=\varepsilon F(u(t)),
\end{equation}
where the vector field $F:\cH\to T\cH$ is defined by $F(v)\omega^n_v=\omega^n$. Infinitesimal variations of $\varepsilon$--geodesics are $\varepsilon$--Jacobi fields. If $\xi:[a,b]\to T\cH$ is an $\varepsilon$--Jacobi field along an $\varepsilon$--geodesic $u:[a,b]\to T\cH$, by Lemma 4.4(b)
\begin{equation}   
\omega^n_{u(t)}\na^2_t\xi(t)=\frac{1}{4}\big\{\{\dot u(t),\xi(t)\}, \dot u(t)\big\} \omega^n_{u(t)}- 
\varepsilon n d \big(F(u(t)) d^c\xi(t)\big)\wedge \omega^{n-1}_{u(t)}.
\end{equation}
All our subsequent results rest on the following theorem.

\begin{thm}   
If $\xi:[a,b]\to T\cH$ is an $\varepsilon$--Jacobi field along an $\varepsilon$--geodesic $u:[a,b]\to \cH$, then $L\circ\xi$ is
a convex function on $[a,b]$.
\end{thm}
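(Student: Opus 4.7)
The approach combines the supremum representation from Theorem 2.6 with the $\varepsilon$-Jacobi equation (4.6), exploiting that a maximizing affine functional for $L$ at $\xi(t_0)$ can be chosen comonotone with $\xi(t_0)$. First I would fix $t_0\in(a,b)$ and, by Theorem 2.6, select $(\alpha,f_0)\in\cA_{u(t_0)}$ approximately achieving $L(\xi(t_0))=\sup_{(a,f)\in\cA_{u(t_0)}}a+\int_X f\,\xi(t_0)\,\omega^n_{u(t_0)}$. Using the strict-rearrangement invariance clause of Theorem 2.6 together with the Hardy--Littlewood rearrangement inequality, I would arrange $f_0=h(\xi(t_0))$ for some non-decreasing $h:\bR\to\bR$: replacing $f_0$ by its comonotone rearrangement along the level sets of $\xi(t_0)$ preserves membership in $\cA_{u(t_0)}$ (by rearrangement invariance) while only increasing the integral (by Hardy--Littlewood).

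Next, propagate $f_0$ by parallel transport along $u$. Let $\varphi_{tt_0}:(X,\omega_{u(t)})\to(X,\omega_{u(t_0)})$ denote the parallel-transport symplectomorphisms of (1.1), and set $f(t)=f_0\circ\varphi_{tt_0}\in T_{u(t)}\cH$. The symplectomorphism-invariance clause of Theorem 2.6 gives $(\alpha,f(t))\in\cA_{u(t)}$, so
$$L(\xi(t))\geq\Phi(t):=\alpha+\int_X f(t)\,\xi(t)\,\omega^n_{u(t)}=\alpha+\int_X f_0\,\hat\xi(t)\,\omega^n_{u(t_0)},$$
where $\hat\xi(t)=\xi(t)\circ\varphi_{t_0 t}$ is the parallel transport of $\xi(t)$ to $T_{u(t_0)}\cH$. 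Since $\hat\xi$ is a smooth curve in the fixed vector space $T_{u(t_0)}\cH$, $\Phi$ is smooth in $t$.

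The key step is the computation $\Phi''(t_0)\geq 0$. From $\ddot{\hat\xi}(t)=(\nabla_t^2\xi(t))\circ\varphi_{t_0 t}$ and change of variables, $\Phi''(t)=\int_X f(t)\,\nabla_t^2\xi(t)\,\omega^n_{u(t)}$. Substituting the $\varepsilon$-Jacobi equation (4.6) and integrating by parts---on the curvature term using the skew-symmetry identity $\int g\{c,h\}\omega^n_u=-\int\{c,g\}h\,\omega^n_u$ (valid because $\text{sgrad}\,c$ preserves $\omega^n_u$), on the $\varepsilon F$ term using Stokes, $d\omega_{u(t)}=0$, and the identity $df\wedge d^cg\wedge\omega^{n-1}_u=(df,dg)_u\omega^n_u/(2n)$ recalled in the proof of Lemma 4.4(a)---yields
\begin{equation*}
\Phi''(t)=\frac{1}{4}\int_X\{\dot u(t),f(t)\}\{\dot u(t),\xi(t)\}\,\omega^n_{u(t)}+\frac{\varepsilon}{2}\int_X F(u(t))(df(t),d\xi(t))_{u(t)}\,\omega^n_{u(t)},
\end{equation*}
with Poisson bracket $\{\,,\,\}=\{\,,\,\}_{u(t)}$. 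At $t=t_0$ the comonotonicity $f_0=h(\xi(t_0))$ gives $\{\dot u(t_0),f_0\}=h'(\xi(t_0))\{\dot u(t_0),\xi(t_0)\}$ and $df_0=h'(\xi(t_0))d\xi(t_0)$, so both integrands become $h'(\xi(t_0))\geq 0$ times pointwise squares, proving $\Phi''(t_0)\geq 0$. Combined with $\Phi\leq L\circ\xi$, $\Phi(t_0)\approx L(\xi(t_0))$, and $\Phi\in C^2$, Taylor expansion yields $L(\xi(t_0+h))+L(\xi(t_0-h))-2L(\xi(t_0))\geq h^2\Phi''(t_0)+o(h^2)$, so $\liminf_{h\to 0}[\cdots]/h^2\geq 0$ at every interior $t_0$. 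Together with continuity of $L\circ\xi$ from Theorem 2.5, a standard criterion---perturbing by $\epsilon t^2/2$ to obtain strict convexity and letting $\epsilon\to 0^+$---gives convexity of $L\circ\xi$ on $[a,b]$.

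The main obstacle is the combination of the comonotone reduction and the approximation argument: one must verify that the supremum in Theorem 2.6 is realized or approximated closely enough by an $f_0$ of the form $h(\xi(t_0))$, and coordinate the gap $L(\xi(t_0))-\Phi(t_0)$ to vanish faster than $h^2$. The strict-rearrangement invariance of $\cA_{u(t_0)}$ and Hardy--Littlewood supply the comonotone reduction, but a careful limiting argument (possibly invoking the strong-continuity framework of Section 3 when applicable) is needed to match approximation rates.
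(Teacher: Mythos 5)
Your overall architecture coincides with the paper's: represent $L$ as a supremum of affine functionals (the paper's Theorem 2.4 --- there is no Theorem 2.6), at a fixed $t_0$ pick a maximizing density $f_0$ compatible with the ordering of $\xi(t_0)$, parallel-transport it along $u$ to obtain a smooth minorant $\Phi\le L\circ\xi$ touching at $t_0$, compute $\Phi''(t_0)\ge 0$ from the $\varepsilon$--Jacobi equation, and conclude by the barrier argument. The computation of $\Phi''$ and the final convexity criterion are exactly as in the paper. However, the step on which everything hinges --- positivity of the two integrands --- has a genuine gap. The claim that the maximizer can be arranged in the form $f_0=h(\xi(t_0))$ with $h$ non-decreasing is false in general: if $\xi(t_0)$ has a level set $E$ of positive $\mu_{u(t_0)}$-measure and the decreasing rearrangement of $f_0$ is non-constant on the corresponding interval, then \emph{every} maximizer of $\int_Xf\,\xi(t_0)\,d\mu_{u(t_0)}$ within the rearrangement class must take several values on $E$, hence cannot be a function of $\xi(t_0)$. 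What Hardy--Littlewood (and the paper's citation of [L4, Lemma 6.2]) actually delivers is a maximizer that is \emph{similarly ordered} with $\xi(t_0)$, i.e.\ $(f_0(x)-f_0(y))(\xi(t_0)(x)-\xi(t_0)(y))\ge0$, which is strictly weaker. With only this, and with $f_0$ merely a bounded Borel function, the chain-rule identities $\{\dot u,f_0\}=h'(\xi)\{\dot u,\xi\}$ and $df_0=h'(\xi)\,d\xi$ are unavailable (even when $f_0=h(\xi(t_0))$ does hold, $h$ need not be differentiable and $f_0$ is not smooth, so $df_0$ is only a current). The positivity of $\{\dot u(t_0),f_0\}\{\dot u(t_0),\xi(t_0)\}$ and of $(df_0,d\xi(t_0))_{u(t_0)}$ is then a nontrivial statement about currents $\sum_{i,j}a_{ij}(V_ig)(V_jh)$ with $g$ smooth and $h$ only locally integrable and similarly ordered with $g$; this is precisely the paper's Lemma 6.3, whose proof must treat separately the regular set of $g$ (where $h$ agrees a.e.\ with a monotone function of $g$) and the critical set $\{dg=0\}$ via a cutoff and dominated convergence. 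This lemma, or an equivalent, is the missing ingredient in your argument.

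A secondary point: the approximation issue you flag at the end (coordinating the gap $L(\xi(t_0))-\Phi(t_0)$ against $h^2$) is real in the order you set things up, but it is avoidable and the paper avoids it. The paper first proves the theorem for the building-block Lagrangians $L_{a,g}(\eta)=a+\sup_{(f,v)\sim(g,u_0)}\int_Xf\eta\,d\mu_v$, for which the supremum over a single rearrangement class \emph{is} attained by a similarly ordered $f_0$, so the touching is exact and no rate-matching is needed; the general $L$ is then a pointwise supremum of such $L_{a,g}$ by Theorem 2.4, and a supremum of convex functions is convex. Reordering your argument this way dissolves the obstacle you identify, but the comonotonicity gap above still must be repaired via Lemma 6.3.
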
 
This will be derived from a special case.

\begin{lem}   
Given $u_0\in \cH$ and $f_0\in B(X)$, Theorem 6.1 holds for the Lagrangian 
\begin{equation}    
L(\eta)=\sup_{(f,v)\sim(f_0, u_0)}\ \int_X f\eta d\mu_v,\quad \eta\in T_v \cH,
\end{equation}
cf. (2.1).
\end{lem}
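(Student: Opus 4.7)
My plan is to reduce the lemma, via the Hardy--Littlewood rearrangement inequality and integration by parts, to a convexity statement for the ``Darvas--type'' functional $\phi^F(t)=\int_X F(\xi(t))\,d\mu_{u(t)}$ associated to a convex $F:\bR\to\bR$, and then to prove that auxiliary statement by a direct computation from the $\varepsilon$--Jacobi equation (6.2).

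Since the supremum in (6.3) can be computed by pulling $f_0$ back through symplectomorphisms $(X,\omega_{u(t)})\to(X,\omega_{u_0})$ and through arbitrary strict rearrangements, just as in the proof of Theorem 2.4, the Hardy--Littlewood rearrangement inequality gives
\[
L(\xi(t))=\int_0^{\mu_{u_0}(X)} f_0^\star(s)\,\xi(t)^\star(s)\,ds,
\]
where $\star$ denotes decreasing rearrangement on the appropriate measure space. Integrating by parts against the nonnegative measure $-df_0^\star$,
\[
L(\xi(t))=f_0^\star(\mu_{u_0}(X))\int_X\xi(t)\,d\mu_{u(t)}+\int_0^{\mu_{u_0}(X)} J_s(t)\,(-df_0^\star)(s),
\]
with $J_s(t):=\int_0^s\xi(t)^\star(\sigma)\,d\sigma$. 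Using (4.2) together with (6.2), a short calculation reduces $\frac{d^2}{dt^2}\int_X\xi\,d\mu_u$ to $\int_X\na_t^2\xi\,d\mu_u$; the integral of the Poisson bracket term vanishes because $\int_X\{g,h\}\,\omega^n_u=0$ on closed $X$, and the $\varepsilon$--correction vanishes by Stokes' theorem (since $\omega^{n-1}_u$ is closed). Hence the first summand above is affine in $t$, and convexity of $L(\xi(t))$ reduces to convexity of $J_s(t)$ for each fixed $s$.

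Now $J_s(t)=\inf_{c\in\bR}\bigl[cs+\phi_c(t)\bigr]$ with $\phi_c(t)=\int_X(\xi(t)-c)_+\,d\mu_{u(t)}$, and an infimum over $c$ of a function jointly convex in $(c,t)$ is convex in $t$; so it suffices to prove $\phi_c(t)$ is jointly convex in $(c,t)$. Approximate $(y-c)_+$ by smooth convex $F_\delta:\bR\to\bR$ with $F_\delta',F_\delta''\ge 0$, and set $\phi^\delta(t,c)=\int_X F_\delta(\xi(t)-c)\,d\mu_{u(t)}$. Integration by parts on $X$ combined with (4.2) gives $\partial_t\phi^\delta=\int_X F_\delta'(\xi-c)\,\na_t\xi\,d\mu_{u(t)}$; differentiating again using metricity of $\na$ for the Mabuchi inner product, substituting (6.2), and invoking the cyclic Poisson identity $\int_X f\{g,h\}\,\omega^n_u=\int_X g\{h,f\}\,\omega^n_u$ together with Stokes' theorem and the identity $n\,d\xi\wedge d^c\xi\wedge\omega^{n-1}_u=\tfrac12|d\xi|_u^2\,\omega^n_u$ extracted from the proof of Lemma 4.4(a), produces
\[
\partial_t^2\phi^\delta=\int_X F_\delta''(\xi-c)\Bigl[(\na_t\xi)^2+\tfrac14\{\dot u,\xi\}^2+\tfrac{\varepsilon}{2}F(u)\,|d\xi|_u^2\Bigr]d\mu_{u(t)}\ge 0.
\]
A parallel (shorter) computation gives $\partial_c^2\phi^\delta=\int_X F_\delta''(\xi-c)\,d\mu_{u(t)}$ and $\partial_t\partial_c\phi^\delta=-\int_X F_\delta''(\xi-c)\,\na_t\xi\,d\mu_{u(t)}$; Cauchy--Schwarz for the measure $F_\delta''(\xi-c)\,d\mu_{u(t)}$ then yields $(\partial_t\partial_c\phi^\delta)^2\le\partial_t^2\phi^\delta\cdot\partial_c^2\phi^\delta$, so the Hessian of $\phi^\delta$ is positive semidefinite. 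Letting $\delta\to 0$, $\phi^\delta(t,c)\to\phi_c(t)$ by dominated convergence, and joint convexity passes to the limit.

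The main obstacle is the computation of $\partial_t^2\phi^\delta$: the successive uses of (4.2), (6.2), the cyclic Poisson identity, and Stokes' theorem have to be arranged so that every surviving integrand manifestly carries the nonnegative sign---in particular so that the $\varepsilon$--correction term in (6.2), which enters with a minus, reappears with a plus after integration by parts against $F_\delta'(\xi-c)$.
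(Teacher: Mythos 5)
Your argument is correct, but it follows a genuinely different route from the paper's. The paper fixes $t_0$, chooses (via the attainment result \cite[Lemma 6.2]{L4}) a maximizer $f_0$ similarly ordered with $\xi(t_0)$, and builds the single supporting function $A(t)=\int_X f_0\,\eta(t)\,\omega^n_{u(0)}$ with $A\le L\circ\xi$, equality at $t_0$, and $\ddot A(t_0)\ge 0$; the positivity of $\ddot A(t_0)$ is extracted from the Jacobi equation through a dedicated current-positivity lemma (Lemma 6.3) for similarly ordered functions, which is exactly what is needed because $f_0$ is only in $B(X)$, and convexity then follows from a barrier argument. You instead evaluate the supremum by Hardy--Littlewood as $\int_0^{V}f_0^\star\,\xi(t)^\star$, integrate by parts against $-df_0^\star$, and reduce everything to joint convexity of $(t,c)\mapsto\int_X(\xi(t)-c)_+\,d\mu_{u(t)}$, which you prove by a direct second-variation computation after smoothing the weight. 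Your identities check out: $\partial_t\int_X h\,\omega^n_{u(t)}=\int_X\na_t h\,\omega^n_{u(t)}$, the Poisson term contributes $\tfrac14\int F_\delta''\{\dot u,\xi\}^2$, the $\varepsilon$-term contributes $\tfrac{\varepsilon}{2}\int F_\delta''\,F(u)|d\xi|_u^2\ge0$ since $F(u)>0$, and the Cauchy--Schwarz step for the mixed partial is sound, as is the partial-minimization step giving convexity of $J_s=\inf_c[cs+\phi_c]$. What your approach buys: the smoothing of $F_\delta$ replaces the paper's Lemma 6.3 entirely (the regularization is moved from the test function $f_0$ to the convex weight), the convexity of $L\circ\xi$ becomes manifest as a superposition of convex functions rather than requiring the barrier argument at the end, and you obtain along the way the stronger-looking statement that every Orlicz-type functional $\int_X F(\xi(t))\,d\mu_{u(t)}$, $F$ convex, is convex along $\varepsilon$-Jacobi fields---close in spirit to Darvas's computations in \cite{D2}. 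What it costs is reliance on the exact Hardy--Littlewood evaluation of the supremum and the layer-cake identity for $J_s$, both standard for the nonatomic measures $\mu_v$, $v\in\cH$, but worth stating explicitly; and, as in the paper, one should note that convexity on the open interval extends to $[a,b]$ by continuity of $L\circ\xi$.
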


To prove Lemma 6.2 we need some preparation. If $Y$ is any set, we say that functions $g,h:Y\to\bR$ are similarly ordered if $(g(x)-g(y))(h(x)-h(y))\ge 0$ for all $x,y\in Y$. Equivalently, $g(x)< g(y)$ should imply $h(x)\le h(y)$. The relation is not transitive, any function is similarly ordered as a constant.

\begin{lem}   
Let $Y$ be an oriented smooth manifold and $a_{ij}$ smooth functions, $V_i$ smooth vector fields on it, $i,j=1,\dots, k$. Assume the matrix $(a_{ij})$ is symmetric and positive semidefinite everywhere. If $g\in C^\infty(Y)$ and a locally integrable
$h:Y\to\bR$ are similarly ordered, then the current $Q=\sum_{i,j} a_{ij} (V_i g)(V_j h)\ge 0$.
\end{lem}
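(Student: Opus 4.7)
The plan is to decompose $(a_{ij})$ into squares and reduce the statement to the case of a single vector field, where a direct flow argument works.

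\textbf{Single vector field case.} I first establish: if $W$ is a smooth vector field on $Y$ with local flow $\Phi_t$, and $g\in C^\infty(Y)$, $h\in L^1_{\mathrm{loc}}(Y)$ are similarly ordered, then $(Wg)(Wh)\ge 0$ as a current. The point is that similar ordering gives the pointwise inequality
\[
D_t(y) := \frac{1}{t^2}\bigl(g(\Phi_t y) - g(y)\bigr)\bigl(h(\Phi_t y) - h(y)\bigr) \ge 0
\]
wherever $\Phi_t$ is defined, so $D_t\ge 0$ as a current for each $t\ne 0$. As $t\to 0$, the smooth factor $u_t := (g\circ\Phi_t - g)/t$ converges to $Wg$ in $C^\infty_{\mathrm{loc}}$, while the locally integrable factor $v_t := (h\circ\Phi_t - h)/t$ converges to $Wh$ in $\cD'$. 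For a nonnegative test function $\phi$, write $\langle D_t,\phi\rangle = \langle v_t,\phi u_t\rangle$ and split
\[
\langle v_t,\phi u_t\rangle - \langle Wh,\phi Wg\rangle = \langle v_t-Wh,\phi Wg\rangle + \langle v_t,\phi(u_t-Wg)\rangle.
\]
The first term tends to $0$ by distributional convergence on the fixed test function $\phi Wg$; the second by Banach--Steinhaus equicontinuity of $\{v_t\}$ in $\cD'$ combined with $C^\infty_{\mathrm{loc}}$ convergence $u_t\to Wg$. Hence $\langle(Wg)(Wh),\phi\rangle = \lim\langle D_t,\phi\rangle \ge 0$.

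\textbf{General case.} If $(a_{ij})$ admits a smooth factorization $a = BB^T$, then
\[
\sum_{ij} a_{ij}\, V_ig\, V_jh = \sum_l (W_lg)(W_lh), \qquad W_l := \sum_i B_{il}V_i,
\]
and the single-vector-field claim applied to each $W_l$ gives the result. The catch is that a smooth square root need not exist when $(a_{ij})$ is merely PSD (rank can drop, eigenvalues can cross). To bypass this I would perturb: $a_{ij}+\varepsilon\delta_{ij}$ is strictly positive definite, so its positive square root is smooth by the inverse function theorem (the map $B\mapsto B^2$ on $\mathrm{Sym}^+(k)$ has invertible differential there). The perturbed statement yields $\sum(a_{ij}+\varepsilon\delta_{ij})\, V_ig\, V_jh\ge 0$; letting $\varepsilon\to 0$, the extra term $\varepsilon\sum_i V_ig\cdot V_ih$ vanishes in $\cD'$, and nonnegativity is preserved under distributional limits.

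The main technical obstacle is the limit argument in the single-vector-field step: taking a $\cD'$-limit of a product one of whose factors is only a distribution. It is resolved by exploiting the smoothness of $g$ to upgrade the convergence of one factor from distributional to $C^\infty_{\mathrm{loc}}$, so that Banach--Steinhaus applied to the other factor closes the estimate.
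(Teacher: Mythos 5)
Your proof is correct, but it follows a genuinely different route from the paper's. The paper exploits the structure of similarly ordered pairs: it shows that $h$ agrees a.e.\ with $m\circ g$ for an increasing function $m$ on the set where $dg\ne 0$ (off a countable family of level sets, each of measure zero on the regular set), reduces to the case $h=H\circ g$ with $H$ smooth increasing --- where $Q=H'(g)\sum a_{ij}(V_ig)(V_jg)\ge 0$ is immediate from positive semidefiniteness --- and then disposes of the critical set $C=\{dg=0\}$ by a cutoff argument, using that $|V_ig|=O(1/s)$ on the support of $d\rho_s$ so that the error terms stay bounded. You instead get positivity directly at the level of finite differences along the flow of a single field, where similar ordering gives the pointwise inequality with no analysis of level sets or critical points, and you handle the matrix by the standard $\varepsilon$-regularization $a+\varepsilon I=B_\varepsilon B_\varepsilon^T$ with smooth square root. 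The trade-off is that your argument shifts the work into distribution theory: the passage $\langle v_t,\phi u_t\rangle\to\langle Wh,\phi Wg\rangle$ needs exactly the equicontinuity/hypocontinuity step you supply (a uniform $L^1_{\mathrm{loc}}$ bound on $v_t$ is not available, so Banach--Steinhaus on a sequence $t_n\to 0$, combined with $C^\infty_{\mathrm{loc}}$ convergence of the smooth factor, is the right tool), and one should note, as you implicitly do, that $v_t\to Wh$ in $\cD'$ is checked by moving the difference quotient onto the test function via the change of variables $\Phi_{-t}$. Both proofs are complete; yours avoids the rearrangement-theoretic input ($h=m\circ g$) and the critical-set cutoff entirely, while the paper's stays closer to pointwise reasoning about currents represented by locally integrable functions.
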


\begin{proof}
It suffices to prove when $h$ is bounded (in general we replace $h$ by $h_R=\min\big(R,\max(-R,h)\big)$ and let $R\to\infty$).
Assume first that in addition there is a smooth increasing $H:\bR\to \bR$ such that $h=H\circ g$. Then $Q=H'(g)\sum{a_{ij}}(V_ig)(V_j g)\ge 0$. The same follows if $H$ is any increasing function, by writing it as $\lim_p\lim_q H_{pq}$ (pointwise limit), with locally uniformly bounded smooth increasing $H_{pq}$.

Now consider general $g,h$. Let $I$ denote the range of $g$, and for $t\in I$ define
\[m(t)=\inf\{h(x): g(x)=t\}, \quad M(t)=\sup\{h(x): g(x)=t\}.\]
If $g(x)=t< g(y)=\tau$, then $h(x)\le h(y)$, which means that
\[m(t)\le M(t)\le m(\tau)\le M(\tau) \quad \text{when } t<\tau.\]
In particular, $m$ and $M$ are increasing functions, and coincide on int$\, I$ wherever one of them is continuous, that is, apart from a countable set $T\subset I$. On $g^{-1}(I\setminus T)$ we have $h=m\circ g$. If $t$ is a  regular value of $g$, then $g^{-1}(t)$ has measure 0. Hence on the regular set of $g$ the functions $h$ and $m\circ g$ agree a.e., and the induced currents simply agree there. By what we already proved, $Q\ge 0$ on the set where $dg\ne 0$. We still need to understand the contribution of the critical set $C=(dg=0)$.

Let $\chi:[0,\infty)\to [0,1]$ be a smooth function, $\chi(t)=0$ if $t\le 1$, $\chi(t)=1$ if $t\ge 2$. Endow $Y$ with a Riemannian metric and denote by dist$(\cdot,C)$ distance to $C$. This is a Lipschitz function with Lipschitz constant 1. For $s>0$, the function $\chi(s \text{ dist}(\cdot,C))$ has Lipschitz constant $O(s)$; it vanishes in the $1/s$ neighborhood of $C$ and equals 1 outside the $2/s$ neighborhood. Let $\rho_s\in C^\infty(Y)$ have the same properties. To prove the lemma we need to show that if $\theta\ge 0$ is a compactly supported smooth volume form on $Y$, then 
\[
0\le\int_YQ\theta=-\int_Y h\sum_{i,j}\pounds_j(\theta{a_{ij}}V_i g),
\]
where $\pounds_j$ stands for Lie derivative along $V_j$.

The inequality holds if $\theta$ is replaced by $\theta\rho_s$, because $Q\ge 0$ in a neighborhood of supp $\theta\rho_s$. The point is that the functions $\pounds_j(\theta\rho_s a_{ij} V_i g)$ are uniformly bounded and tend to 
$\pounds_j(\theta a_{ij} V_i g)$ a.e. as $s\to\infty$. The former
is verified by applying Leibniz rule to the products, and checking each term. The only term that needs speaking for is
$\theta a_{ij}(V_ig)(V_j\rho_s)$. But since $|V_ig|\,|V_j\rho_s|$ attains its maximum on 
$\{y\in Y: 1/s\le \text{ dist}(y, C)\le 2/s\}$, 
this maximum is $O(1/s) O(s)=O(1)$. As to convergence,
\begin{equation*}
\lim_{s\to\infty} \pounds_j(\theta\rho_s a_{ij} V_i g)=\begin{cases}\pounds_j(\theta a_{ij} V_i g) &\text{ on } Y\setminus C\\
0 &\text{ on } C.\end{cases}
\end{equation*}
If $\pounds_j(\theta a_{ij} V_i g)(y)\neq 0$ at some $y\in C$, then $(V_jV_ig)(y)\neq 0$; thus $y$ is a zero and a regular point
of $V_ig$. Such points form a hypersurface in $Y$, of zero measure. This proves the a.e. convergence statement.

We conclude by dominated convergence:
\[
\int_Y Q\theta=-\lim_{s\to\infty}\int_Y h\sum_{i,j}\pounds_j(\theta\rho_s a_{ij}V_ig)=
\lim_{s\to\infty}\int_Y Q\theta\rho_s\ge 0.
\]
\end{proof}

\begin{proof}[Proof of Lemma 6.2]
The plan is to construct for every $t_0\in(a,b)$ a 
family $f(t)\in B(X)$ such that $(f(t), u(t))\sim(f_0,u_0)$ and $A(t)=\int_X f(t)\xi(t) d\mu_{u(t)}\le L(\xi(t))$ satisfies
\[A(t_0)=L(\xi(t_0)),\qquad \ddot A(t_0)\ge 0.\]
To simplify notation we can assume $t_0=0$. At the price of replacing $f_0$ by $f_1$ such that $(f_0, u_0)\sim(f_1, u(0))$, we can assume $u(0)=u_0$. Further to simplify we can arrange that $f=f_0$ realizes
\begin{equation*}
\sup_{(f,u(0))\sim (f_0, u(0))}\int_X f\xi(0) \,d\mu_{u(0)}, \quad\text{ i.e., }\quad L(\xi(0))=\int_xf_0\xi(0)\,d\mu_{u(0)} \,;
\end{equation*}
this is possible simply because the supremum is attained, see e.g., \cite[Lemma 6.2]{L4}. The same lemma says that there is a maximizing $f$ that is similarly ordered as $\xi(0)$, and accordingly we will work with $f_0$ similarly ordered as 
$\xi(0)$. 

For a moment suppose $u:[a,b]\to \cH$ is an arbitrary smooth path, and parallel transport $T_{u(0)} \cH\to T_{u(t)} \cH$ along $u$ is given by pull back by a symplectomorphism $\varphi(t):(X,\omega_{u(t)})\to(X,\omega_{u(0)})$. Suppose $\eta:[a,b]\to T_{u(0)}\cH$ is smooth; then $t\mapsto \eta(t)\circ \varphi(t)$ defines a vector field along $u$. Parallel transport 
intertwines differentiation and covariant differentiation:
\[\na_t(\eta(t)\circ\varphi(t))=\dot\eta(t)\circ\varphi(t) \quad\text{ and }\quad
\na_t^2(\eta(t)\circ\varphi(t))=\ddot\eta(t)\circ\varphi(t).\]

When $u$ is an $\varepsilon$--geodesic and $\xi$ an $\varepsilon$--Jacobi field along it, as in the lemma, choose $\eta$ so that $\eta(t)\circ\varphi(t)=\xi(t)$. By (6.2), at $t=0$, 
\begin{equation}    
\ddot\eta(0)\omega^n_{u(0)}=(1/4)\{\{\dot u(0),\eta(0)\},\dot u(0)\}\omega^n_{u(0)}-
\varepsilon n d\big(F(u(0)) d^c\eta(0)\big)\wedge\omega^{n-1}_{u(0)}.
\end{equation}
With $f(t)=f_0\circ \varphi(t)$ we let
\[
A(t)=\int_X f_0\eta(t)\omega^n_{u(0))}=\int_X f(t)\xi(t)\omega^n_{u(t)}\le L(\xi(t)),
\]
then $\ddot A(t)=\int_X f_0\ddot\eta(t)\omega^n_{u(0)}$. In view of (6.4)
\begin{align*}
\ddot A(0)&=\frac{1}{4}\int_X f_0\{\{\dot u(0), \eta(0)\}, \dot u(0)\}\omega^n_{u(0)}-
\varepsilon n\int_X f_0 d\big(F(u(0))d^c\eta(0)\big)\wedge \omega^{n-1}_{u(0)}\\
&=\frac{1}{4}\int_X\{\dot u(0), f_0\}\{\dot u(0), \eta(0)\}\omega^n_{u(0)}+\varepsilon n\int_X F(u(0)) d f_0\wedge d^c\eta(0)\wedge\omega^{n-1}_{u(0)}.
\end{align*}
In the last line $\{\dot u(0),f_0\}$ and $df_0$ are currents. By Lemma 6.3 the first integrand in this last line is $\ge 0$, since $f_0$ and $\eta(0)=\xi(0)$ are similarly ordered; and so is, for the same reason, $2n df_0\wedge d^c\eta (0)\wedge \omega^{n-1}_{u(0)}=(df_0, d\eta(0))_{u(0)}\omega^n_{u(0)}
$, cf. \cite[p.103]{Bl2}.

To summarize, we have shown that for every $t_0\in(a,b)$ there is a function $A\in C^\infty[a,b]$ such that
\[A(t)\le L(\xi(t)), \text{ with equality when } t=t_0, \text{ and } \ddot A(t_0)\ge 0.\]
By a standard argument this implies that $L\circ\xi$ is convex. First one notes that if $p>0$ and $q\in\bR$, the function $L(\xi(t))+pt^2+qt$ cannot have a local maximum at any $t_0\in (a,b)$, because with the $A$ we have constructed 
$A(t)+pt^2+qt$ has no local maximum at $t_0$. It follows that on any subinterval $[\alpha,\beta]\subset[a,b]$, $L(\xi(t))+pt^2+qt$ attains its maximum at one of the endpoints, whence $L(\xi(t))+pt^2$ is convex. Letting $p\to 0$ we see that $L\circ\xi$ itself is also convex. 
\end{proof}

\begin{proof}[Proof of Theorem 6.1]
Clearly, Lemma 6.2 implies that if $a\in\bR$, $g\in B(X)$, and 
\[L_{a,g}(\eta)=a+\sup_{(f,v)\sim (g,u_0)} \int_X f\eta d\mu_v,\quad \eta\in T_v \cH,\]
then $L_{a,g}$ is convex along any $\varepsilon$--Jacobi field. Since by Theorem 2.4 a general invariant convex Lagrangian is the supremum of a family of such $L_{a,g}$, the theorem follows.
\end{proof}

\section{Least action in $\cH$ and $\cH^{1\bar 1}$}    

In this section we will compare the actions along weak geodesics in $\cH^{1\bar 1}$ and along
general paths in $\cH$. Recall the notation $T^c\cH=\cH\times C(X)$.

\begin{thm}    
Suppose a Lagrangian $L:T^c\cH\to\bR$ is invariant and convex. Consider a piecewise $C^1$ path 
$u:[0,T]\to \cH$ and a weak geodesic $v:[0,T]\to \cH^{1\bar 1}$. If $u(0)=v(0)$ and $u(T)=v(T)$, then
\begin{equation}    
\frac 1T\int^T_0 L\circ\dot u\ge L(\dot v(0)).
\end{equation}
\end{thm}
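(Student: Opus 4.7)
The plan is a three-step approximation: reduce to an $\varepsilon$-geodesic version via Chen's theorem, prove the $\varepsilon$-version exactly using the Jacobi-field convexity of Theorem 6.1, and pass to the limit $\varepsilon\to 0$ via the strong continuity of $L$.

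For the first step, Theorem 4.2 provides smooth $\varepsilon$-geodesics $v^\varepsilon:[0,T]\to\cH$ with $v^\varepsilon(0)=u(0)$ and $v^\varepsilon(T)=u(T)$, converging uniformly to $v$ as $\varepsilon\to 0$, with $\ddot v^\varepsilon$ and $d\dot v^\varepsilon$ uniformly bounded. The $\varepsilon$-version to be proved is
\[
\int_0^T L(\dot u(t))\,dt \ \ge\ T L(\dot v^\varepsilon(0)).
\]
To set it up, introduce for $s\in(0,T]$ the $\varepsilon$-geodesic $\gamma_s:[0,s]\to\cH$ with $\gamma_s(0)=u(0)$ and $\gamma_s(s)=u(s)$, and define $H(s)=s\,L(\dot\gamma_s(0))$. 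By uniqueness, $\gamma_T=v^\varepsilon$, so $H(T)=T L(\dot v^\varepsilon(0))$; a direct Taylor expansion of (4.4) on the shrinking interval $[0,s]$ gives $\dot\gamma_s(0)=\dot u(0)+O(s)$, so $H(0^+)=0$ by continuity of $L$. The $\varepsilon$-version will then follow from the pointwise bound $H'(s)\le L(\dot u(s))$ for almost every $s$, integrated on $[0,T]$.

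To establish $H'(s)\le L(\dot u(s))$, I handle the non-smoothness of $L$ via the representation Theorem 2.4. At a point $s$ where $H$ is differentiable, pick $(a,f)\in\cA_{u(0)}$ attaining $L(\dot\gamma_s(0))=a+\int_X f\,\dot\gamma_s(0)\,\omega_{u(0)}^n$ with $f$ similarly ordered to $\dot\gamma_s(0)$, exactly as in the proof of Lemma 6.2. The affine function $A(\sigma)=\sigma\bigl(a+\int_X f\,\dot\gamma_\sigma(0)\,\omega_{u(0)}^n\bigr)$ satisfies $A\le H$ with equality at $\sigma=s$, hence $H'(s)=A'(s)$. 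Differentiating produces $L(\dot\gamma_s(0))+s\int_X f\,\partial_s\dot\gamma_s(0)\,\omega_{u(0)}^n$; the variation $\partial_s\gamma_s$ is (after rescaling onto a common interval) a perturbation of an $\varepsilon$-Jacobi field along $\gamma_s$, so integrating by parts against the Jacobi equation (4.6) and using the similar-ordering of $f$ exactly as in the final calculation of the proof of Lemma 6.2 keeps the $\varepsilon n\,d(F\,d^c\xi)\wedge\omega^{n-1}$ correction nonnegative with the right sign. What remains is bounded above by $a+\int_X f'\,\dot u(s)\,\omega^n_{u(s)}$ for a strict rearrangement $f'$ of $f$, which is $\le L(\dot u(s))$ by the rearrangement-invariant representation of $L$ on $T_{u(s)}\cH$.

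For the third step, the uniform $C^2$-bounds from Theorem 4.2 make $\{\dot v^\varepsilon(0)\}_\varepsilon$ equicontinuous and uniformly bounded on $X$, so a subsequence converges in $C(X)$ to some $\xi$; the $C^1$ hypothesis on $v:[0,T]\to B(X)$ and Lemma 5.1 identify $\xi=\dot v(0)$, and Lemma 3.4 then gives $L(\dot v^\varepsilon(0))\to L(\dot v(0))$, completing the proof. The main obstacle is the Jacobi-field bookkeeping in the second step: achieving the exact inequality $H'(s)\le L(\dot u(s))$ with no $O(\varepsilon)$ remainder rests on the similar-ordering selection of $f$, which makes the $\varepsilon$-contribution from (4.6) point in the favorable direction, rather than producing an error of order $\varepsilon$.
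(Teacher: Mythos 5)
Your high-level scheme (approximate by $\varepsilon$-geodesics via Chen's theorem, prove an inequality at fixed $\varepsilon$ using the Jacobi-field convexity of Theorem 6.1, let $\varepsilon\to 0$) matches the paper's, but your mechanism for the fixed-$\varepsilon$ step --- a first-variation argument for $H(s)=s\,L(\dot\gamma_s(0))$ --- is different. The paper instead sweeps the region between $u$ and $v^\varepsilon$ by $\varepsilon$-geodesics $U(s,\cdot):[0,T]\to\cH$ from the fixed apex $u(0)$, applies the convexity of $L\circ\xi^s$ to the transverse Jacobi fields $\xi^s=\partial_sU$ (which vanish at the apex), and integrates over $s$ using Jensen's inequality (Lemma 7.2), first for positively homogeneous $L$ and then for general $L$ via the representation of Theorem 2.4.

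The central step of your argument, $H'(s)\le L(\dot u(s))$, is where the gap lies, for two concrete reasons. First, your family $\gamma_\sigma$ lives on the varying intervals $[0,\sigma]$; rescaled to a common interval, $\gamma_\sigma(\sigma t/T)$ solves the $(\sigma/T)^2\varepsilon$-geodesic equation, so $\partial_\sigma\gamma_\sigma$ is not an $\varepsilon$-Jacobi field and (4.6) does not apply to it: the variation equation acquires the extra source $(d\varepsilon_\sigma/d\sigma)F(\gamma_\sigma)$, whose pairing with $f$ equals $(d\varepsilon_\sigma/d\sigma)\int_Xf\,\omega^n$ --- a term with no favorable sign. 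This already contradicts your claim of an exact inequality with no $O(\varepsilon)$ remainder (the term is $O(\varepsilon)$ and would wash out in the limit, but you neither acknowledge nor estimate it). Second, and more seriously, the quantity you must control, $s\int_Xf\,\partial_\sigma\dot\gamma_\sigma(0)\,\omega^n_{u(0)}$, lives at the apex $t=0$, while the bound you want, $L(\dot u(s))$, lives at the far endpoint $t=s$. Bridging the two requires propagating the linear functional along the entire geodesic: one needs convexity of $t\mapsto\int_Xf(t)\xi(t)\,d\mu_{\gamma_s(t)}$ (with $f(t)$ the parallel transport of $f$ and $\xi$ the variation field) on all of $[0,s]$ together with $\xi(0)=0$, which is precisely the computation of the paper's Lemma 7.2, not a pointwise integration by parts against (4.6). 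Moreover, the sign input for Lemma 6.3 is similar ordering of $f$ with the \emph{variation field}, whereas you impose similar ordering of $f$ with the \emph{velocity} $\dot\gamma_s(0)$; these are different functions, so the needed positivity is not available from your choice of $f$. Until the differential inequality $H'(s)\le L(\dot u(s))$ (possibly up to a controlled $O(\varepsilon)$ error) is actually derived, the proof is incomplete at its crux.
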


First we prove a variant.

\begin{lem}  
Suppose an invariant convex Lagrangian $L:T\cH\to\bR$ is positively homogeneous, $L(c\xi)=c L(\xi)$ if $c>0$. Consider a triangle in $\cH$ formed by a piecewise $C^1$ path $u:[a,b]\to \cH$ and $\varepsilon$--geodesics $v_a,v_b:[0,T]\to \cH$; so that $v_a(0)=v_b(0)$ and $v_a(T)=u(a)$, $v_b(T)=u(b)$. Then 
\begin{equation}     
\frac 1T\int^b_a L\circ \dot u\ge L(\dot v_b(0)-\dot v_a(0)).
\end{equation}
\end{lem}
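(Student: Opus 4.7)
The plan is to fix the common apex $v_a(0)=v_b(0)$ and let the opposite vertex slide along $u$: for each $s\in[a,b]$, Theorem 4.2 produces a unique $\varepsilon$-geodesic $w_s:[0,T]\to\cH$ with $w_s(0)=v_a(0)$ and $w_s(T)=u(s)$. Assuming (by a standard smooth approximation on each $C^1$ piece of $u$, followed by telescoping across pieces) that $s\mapsto w_s$ is smooth in $s$, the vector field $\xi_s(t)=\partial_s w_s(t)$ is an $\varepsilon$-Jacobi field along $w_s$ with $\xi_s(0)=0$ and $\xi_s(T)=\dot u(s)$. By uniqueness $w_a=v_a$ and $w_b=v_b$, hence $\dot w_a(0)=\dot v_a(0)$ and $\dot w_b(0)=\dot v_b(0)$.

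By Theorem 6.1 the function $t\mapsto L(\xi_s(t))$ is convex on $[0,T]$; positive homogeneity forces $L(0)=0$, so $L(\xi_s(0))=0$, and the secant-slope inequality for convex functions gives
\[
\frac{1}{T}L(\dot u(s))=\frac{L(\xi_s(T))-L(\xi_s(0))}{T}\ \ge\ \lim_{t\to 0^+}\frac{L(\xi_s(t))}{t}.
\]
To identify this right derivative, let $\tau_t^0:T_{w_s(t)}\cH\to T_{w_s(0)}\cH$ denote parallel transport along $w_s$. The curve $\widetilde\xi_s(t):=\tau_t^0\xi_s(t)$ lives in the fixed tangent space $T_{w_s(0)}\cH\approx C^\infty(X)$, satisfies $\widetilde\xi_s(0)=0$, and has derivative
\[
\widetilde\xi_s'(0)=\na_t\xi_s(0)=\na_s\dot w_s(0)=\partial_s\dot w_s(0),
\]
where the middle equality uses the torsion-freeness of $\na$ and the last uses that $s\mapsto w_s(0)$ is constant. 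Holonomy invariance of $L$ gives $L(\xi_s(t))=L(\widetilde\xi_s(t))$; writing $\widetilde\xi_s(t)=t\,\partial_s\dot w_s(0)+r(t)$ with $r(t)/t\to 0$ in $C^\infty(X)$, positive homogeneity converts the difference quotient into
\[
\frac{L(\xi_s(t))}{t}=L\!\left(\partial_s\dot w_s(0)+\frac{r(t)}{t}\right),
\]
which by fiberwise continuity of $L$ tends to $L(\partial_s\dot w_s(0))$ as $t\to 0^+$. Hence $L(\dot u(s))/T\ge L(\partial_s\dot w_s(0))$.

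Integrating over $[a,b]$ and applying Jensen's inequality for the sublinear functional $L$ (convex plus positively homogeneous is subadditive plus positively homogeneous),
\[
\frac{1}{T}\int_a^b L\circ\dot u\ \ge\ \int_a^b L(\partial_s\dot w_s(0))\,ds\ \ge\ L\!\left(\int_a^b \partial_s\dot w_s(0)\,ds\right)=L(\dot v_b(0)-\dot v_a(0)),
\]
and subadditivity applied to $\dot v_b(0)=\dot v_a(0)+(\dot v_b(0)-\dot v_a(0))$ finally yields $L(\dot v_b(0)-\dot v_a(0))\ge L(\dot v_b(0))-L(\dot v_a(0))$, which completes the proof. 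The most delicate step I expect is the identification of $\lim_{t\to 0^+}L(\xi_s(t))/t$: one must pass from the purely geometric Jacobi-field expansion $\widetilde\xi_s(t)=t\,\partial_s\dot w_s(0)+o(t)$ to a limit of $L(\widetilde\xi_s(t))/t$ using only fiberwise continuity and positive homogeneity, since $L$ is not assumed differentiable at any point.
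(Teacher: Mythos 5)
Your proposal is correct and follows essentially the same route as the paper: the sliding-vertex family of $\varepsilon$--geodesics, convexity of $L$ along the resulting $\varepsilon$--Jacobi fields (Theorem 6.1), vanishing of $L(\xi_s(0))$ by homogeneity, identification of the right derivative at $t=0$ via parallel transport and torsion-freeness, then Jensen's inequality and subadditivity. The paper likewise first reduces to smooth $u$ and invokes Theorem 4.2 for smooth dependence of the geodesic family on $s$; your slightly more explicit treatment of the limit $L(\widetilde\xi_s(t))/t$ via $\widetilde\xi_s(t)=t\,\partial_s\dot w_s(0)+o(t)$ is just a fuller writing of the same step.
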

Note that positive homogeneity implies the triangle inequality $L(\xi+\eta)\le L(\xi)+L(\eta)$ for $w\in\cH$ and
$\xi,\eta\in T_w\cH$.
\begin{proof}
Because of the additive nature of (7.2), we can assume $u$ is $C^1$, not only piecewise, and then by simple approximation that it is even $C^\infty$. For each $s\in[a,b]$ let $U(s,\cdot): [0,T]\to \cH$ denote the $\varepsilon$--geodesic connecting $v_a(0)=v_b(0)$ with $u(s)$. According to Theorem 4.2, that is, by Chen's work, there is a unique such geodesic, and $U\in C^\infty([a,b]\times [0, T])$. Thus $\xi^s=\partial_s U(s,\cdot)$ is an $\varepsilon$--Jacobi field and $\xi^s(0)=0$. By Theorem 6.1 $L\circ\xi^s$ is convex on $[0,T]$. Using $\partial_t$ (and later, dot) to denote right derivative, therefore
\[L(\xi^s(T))\ge L(\xi^s(0))+T\partial_t |_{t=0} L(\xi^s(t)).\]
By homogeneity, the first term on the right is 0. To compute the second, let $\eta(t)\in T_{U(0,0)} \cH$ denote the parallel translate of $\xi^s(t)\in T_{U(s,t)} \cH$ along $U(s,\cdot)$. Thus
\begin{align*}
\lim_{t\to 0} L(\xi^s(t))/t&=\lim_{t\to 0} L(\eta(t))/t=\lim_{t\to 0}L(\eta(t)/t)\\
&=L(\na_t |_{t=0}\xi^s(t))=L(\na_t |_{t=0}\partial_s U(s,t))=L(\partial_s\partial_t |_{t=0} U(s,t)).
\end{align*}
The last equality is because $\na$ has no torsion, and $U(\cdot,0)$ is constant. Hence, using Jensen's inequality as well, 
\begin{align*}
\frac 1T\int^b_a L(\partial_s u(s))ds=\frac 1T\int^b_a &L(\xi^s(T)) ds\ge\int^b_a L(\partial_s\partial_t |_{t=0} U(s,t))ds\\
\ge L\Big(\int^b_a\partial_s\partial_t& |_{t=0} U(s,t)ds\Big)=L(\dot v_b(0)-\dot v_a(0))
,\end{align*}
as claimed.
\end{proof}

\begin{proof}[Proof of Theorem 7.1]
For $\varepsilon>0$ let $v^\varepsilon,w^\var:[0, T]\to \cH$ denote the $\varepsilon$--geodesics connecting $u(0)$ with 
$u(T)$, respectively, $u(0)$ with itself. Again by Chen \cite{C}, see also B\l ocki \cite{Bl1, Bl2}, $v^\varepsilon\to v$ and
$w^\var\to u(0)$  in such a way that $\dot v^\varepsilon(0)\to\dot v(0)$ and $\dot w^\var(0)\to 0$ in $T^c_{u(0)}\cH$ as 
$\varepsilon\to 0$. Suppose first that $L$ is positively homogeneous, and apply Lemma 7.2 with $[a,b]=[0,T]$, 
$v_a=w^\var$, $v_b=v^\varepsilon$. We obtain
\[
\frac 1T\int^T_0 L\circ\dot u\ge L(\dot v^\varepsilon(0)-\dot w^\var(0)).
\]
Letting $\var\to 0$,
\begin{equation} 
\frac 1T\int^T_0 L\circ\dot u\ge L(\dot v(0))
\end{equation}
follows.
This is true even if $L$ is not positively homogeneous but $L$ plus a constant is.  Since a general $L$ is the supremum of Lagrangians of form positively homogeneous plus constant, see Theorem 2.4, (7.3) follows for a general $L$.
\end{proof}

\begin{lem}    
If $L:T^\infty\cE(\omega)\to\bR$ is strict rearrangement invariant, and
$v:[0,T]\to \cH^{1\bar 1}$ is a weak geodesic, then $L\circ \dot v$ is constant. Hence
\[L(\dot v(0))= \frac 1T \int^T_0 L\circ\dot v.\]
\end{lem}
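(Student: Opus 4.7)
I would show $L\circ\dot v$ is constant on $[0,T]$; the integral identity $L(\dot v(0))=\frac{1}{T}\int_0^T L\circ\dot v$ follows immediately. The plan is to approximate $v$ by Chen's $\varepsilon$-geodesics $v^\varepsilon:[0,T]\to\cH$ sharing the endpoints of $v$, use the symplectomorphism implementing parallel transport along $v^\varepsilon$ to pull $\dot v^\varepsilon(t)$ back to the fixed tangent space $T_{v(0)}\cH$, and show the pulled-back velocities converge to $\dot v(0)$ strongly enough to test against $L$. The strict-rearrangement invariance and strong continuity of $L$ will then force $L\circ\dot v$ to equal the constant $L(\dot v(0))$.

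Let $\varphi^\varepsilon(t):(X,\omega_{v(0)})\to(X,\omega_{v^\varepsilon(t)})$ be the parallel-transport symplectomorphism from (1.1), and put $\eta^\varepsilon(t):=\dot v^\varepsilon(t)\circ\varphi^\varepsilon(t)\in T_{v(0)}\cH$. Parallel transport intertwines $d/dt$ with $\nabla_t$, and $v^\varepsilon$ satisfies $\nabla_t\dot v^\varepsilon=\varepsilon F(v^\varepsilon)$, so $\dot\eta^\varepsilon(t)=\varepsilon F(v^\varepsilon(t))\circ\varphi^\varepsilon(t)$. Theorem 4.2 supplies a uniform bound $\|\varepsilon F(v^\varepsilon)\|_{L^\infty(X)}\le C$ (since $\varepsilon F=\ddot v^\varepsilon-\tfrac12(d\dot v^\varepsilon,d\dot v^\varepsilon)_{v^\varepsilon}$ with both summands uniformly bounded), while the $F$-normalization gives $\int_X\varepsilon F(v^\varepsilon)\,d\mu_{v^\varepsilon}=\varepsilon\int_X\omega^n$. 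Using that $\varphi^\varepsilon(t)$ is measure-preserving, interpolation yields $\|\dot\eta^\varepsilon(s)\|_{L^2(\mu_{v(0)})}^2 \le C\varepsilon\int_X\omega^n$; integrating in $t$, $\eta^\varepsilon(t)\to\dot v(0)$ in $L^2(\mu_{v(0)})$ uniformly in $t$, and the family $\{\eta^\varepsilon(t)\}$ stays uniformly bounded in $B(X)$.

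Passing to a subsequence with $\eta^{\varepsilon_k}(t)\to\dot v(0)$ pointwise $\mu_{v(0)}$-a.e.\ and uniformly bounded in sup norm, the strong continuity of $L$ on the fiber $T_{v(0)}^\infty\cE(\omega)$ (Definition 3.1 together with Theorem 3.2) forces $L(\eta^{\varepsilon_k}(t))\to L(\dot v(0))$. Because $\varphi^\varepsilon(t)$ is symplectic, $(\eta^\varepsilon(t),v(0))\sim(\dot v^\varepsilon(t),v^\varepsilon(t))$, and strict-rearrangement invariance (Theorem 2.2) gives $L(\eta^\varepsilon(t))=L(\dot v^\varepsilon(t))$ for every $\varepsilon$. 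Finally, Lemma 3.4 together with the uniform convergences $v^\varepsilon(t)\to v(t)$ and $\dot v^\varepsilon(t)\to\dot v(t)$ (and the continuity of $\dot v(t)$ from $v\in\cH^{1\overline 1}$) gives $L(\dot v^\varepsilon(t))\to L(\dot v(t))$. Chaining these three identities along the subsequence produces $L(\dot v(t))=L(\dot v(0))$ for every $t\in[0,T]$.

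The principal technical obstacle will be the uniform $L^\infty$ estimate on $\varepsilon F(v^\varepsilon)=\nabla_t\dot v^\varepsilon$ from Chen's $\varepsilon$-geodesic theory (Theorem 4.2); this is exactly what makes the $L^2$ energy of the drift $\dot\eta^\varepsilon$ vanish as $\varepsilon\to 0$. Once that is in hand, the rest is a careful coordination of rearrangement invariance (Theorem 2.2), fiberwise strong continuity (Theorem 3.2), and capacity continuity (Lemma 3.4).
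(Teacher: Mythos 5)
Your argument is essentially correct, but it takes a genuinely different route from the paper. The paper's proof is two lines: it quotes Berndtsson's result \cite[Proposition 2.2]{Be2} (in the generality of \cite[Lemma 4.6]{D2}) that along a weak geodesic the velocities $\dot v(t)\in B(X,\mu_{v(t)})$ are equidistributed for all $t$, and then invokes invariance of $L$ under strict rearrangements. You instead reprove, within the machinery of the paper, the weaker statement that is actually needed: pulling $\dot v^\varepsilon(t)$ back to $T_{v(0)}\cH$ by the parallel--transport symplectomorphism, observing that the drift is $\varepsilon F(v^\varepsilon)\circ\varphi^\varepsilon(t)$, which is nonnegative with total mass $\varepsilon\int_X\omega^n$ and uniformly bounded, hence small in $L^2(\mu_{v(0)})$, and then chaining rearrangement invariance, strong continuity on the fiber, and Lemma 3.4. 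This is longer but self--contained (no appeal to \cite{Be2} or \cite{D2}), and, run over all invariant $L$, it would essentially recover the equidistribution statement; the paper's citation buys brevity and also covers weak geodesics whose endpoints are merely in $\cH^{1\bar 1}$, whereas your approximation by $\varepsilon$--geodesics requires endpoints in $\cH$ (Theorem 4.2), so strictly speaking you prove the lemma only in that case --- which happens to be the only case used later, but is worth stating.

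Two small points to tighten. First, the uniform bound $\|\varepsilon F(v^\varepsilon)\|\le C$ does not follow from both summands in $\varepsilon F=\ddot v^\varepsilon-\tfrac12(d\dot v^\varepsilon,d\dot v^\varepsilon)_{v^\varepsilon}$ being bounded: the gradient term involves the inverse of $\omega_{v^\varepsilon}$, which may degenerate as $\varepsilon\to0$. The correct observation is that this term is nonnegative, so $0\le\varepsilon F(v^\varepsilon)\le\ddot v^\varepsilon\le C$. Second, the subsequence along which $\eta^{\varepsilon_k}(t)\to\dot v(0)$ almost everywhere depends on $t$; this is harmless because the identity $L(\dot v(t))=L(\dot v(0))$ is established for each fixed $t$ separately, but you should say so.
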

This can be seen as an instance of Noether's theorem on conserved quantities, albeit in an unusual setting.

\begin{proof}
Berndtsson \cite[Proposition 2.2]{Be2} discovered that $\dot v(t)\in B(X,\mu_{v(t)})$ are equidistributed for all $t$, although he worked with integral K\"ahler classes $[\omega]$ and weak geodesics terminating in $\cH$ only. At any rate, \cite[Lemma 4.10]{D2} implies the general result. Since $L$ is invariant, the lemma follows.
\end{proof}

Together with Theorem 7.1 this almost proves the principle of least action in $\cH^{1\bar 1}$:

\begin{cor}   
Suppose $L:T^\infty\cE(\omega)\to\bR$ is a strongly continuous, invariant, and convex Lagrangian.
If $u:[0,T]\to \cH$ is a piecewise $C^1$ path and $v:[0,T]\to \cH^{1\bar 1}$ is a weak geodesic between the same endpoints, then $\int^T_0 L\circ\dot u\ge \int^T_0 L\circ\dot v$. 
\end{cor}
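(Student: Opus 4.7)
The corollary is essentially an immediate synthesis of the two preceding results, so the plan is short: combine the inequality provided by Theorem 7.1 with the conservation-law statement of Lemma 7.3.

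First I would invoke Theorem 7.1, which under the stated hypotheses ($u$ piecewise $C^1$ in $\cH$, $v$ a weak geodesic in $\cH^{1\bar 1}$, sharing endpoints) gives
\[
\frac{1}{T}\int_0^T L\circ\dot u\;\ge\;L(\dot v(0)).
\]
Next I would apply Lemma 7.3, which asserts that $L\circ\dot v$ is constant on $[0,T]$, so that
\[
L(\dot v(0))\;=\;\frac{1}{T}\int_0^T L\circ\dot v.
\]
Chaining these two equalities/inequalities and multiplying through by $T$ yields the desired $\int_0^T L\circ\dot u\ge\int_0^T L\circ\dot v$.

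There is no real obstacle here; the only thing to note is that both Theorem 7.1 and Lemma 7.3 have already been verified in precisely the generality required (strongly continuous, invariant, convex $L$ on $T^\infty\cE(\omega)$, with $v$ a weak geodesic in $\cH^{1\bar 1}$), so no additional regularity reduction, mollification, or $\varepsilon\to 0$ argument is needed at this stage. The substantive work was in Theorem 7.1, where the $\varepsilon$-geodesic approximation of $v$ and convexity of $L$ along $\varepsilon$-Jacobi fields (Theorem 6.1) were used; and in Lemma 7.3, which rested on Berndtsson's equidistribution of $\dot v(t)$ together with Darvas's extension \cite[Lemma 4.6]{D2} and the invariance of $L$ under strict rearrangements.
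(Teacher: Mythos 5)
Your proposal is correct and is exactly the argument the paper intends: the corollary is stated immediately after Lemma 7.3 with the remark that it follows ``together with Theorem 7.1,'' i.e.\ by chaining $\frac1T\int_0^T L\circ\dot u\ge L(\dot v(0))=\frac1T\int_0^T L\circ\dot v$. No gaps.
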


\section{Least action in $B(X)\cap \mathrm{PSH}(\omega)$}    

Here we will extend Corollary 7.4 to $u,v$ taking values in $B(X)\cap\psho$ (Theorem 5.2). In this section 
$L:T^\infty\cE(\omega)\to\bR$ is assumed to be strongly continuous, invariant, and convex.

\begin{thm}    
Suppose $u,v:[0, T]\to B(X)\cap\psho$ have the same endpoints: $u(0)=v(0)$, $u(T)=v(T)$. If $u$ is piecewise $C^1$ and $v$ is a $C^1$ weak geodesic, then $\int^T_0 L\circ \dot u\ge\int^T_0 L\circ\dot v.$
\end{thm}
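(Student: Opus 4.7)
My strategy is to reduce Theorem 8.1 to the already proved Corollary 7.4 by approximating $u$ and $v$ by paths whose endpoints lie in $\cH$, and then letting the approximation parameter tend to zero using the continuity machinery of Section 3.

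First I would approximate the common endpoints. By a Demailly-type regularization of bounded $\omega$-plurisubharmonic functions, choose $\phi_k,\psi_k\in\cH$ with $\phi_k\searrow v(0)=u(0)$ and $\psi_k\searrow v(T)=u(T)$ uniformly as $k\to\infty$. Let $v_k:[0,T]\to\cH^{1\bar 1}$ be the weak geodesic connecting $\phi_k$ and $\psi_k$; it exists and lies in $\cH^{1\bar 1}$ by Chen's theorem, and the sup characterization (4.1) forces $v_k\searrow v$ uniformly on $[0,T]$. Since $v$ is already assumed $C^1$ into $B(X)$, a stability argument for weak geodesics in the spirit of \cite{Be1,C,CTW,D2,He} yields $\dot v_k(t)\to\dot v(t)$ in capacity for a.e.\ $t$, with a uniform sup bound on $\dot v_k$.

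Next I would construct piecewise $C^1$ approximations $u_k:[0,T]\to\cH$ sharing endpoints with $v_k$. Starting from the function $U(t,x)=u(t)(x)$, which is piecewise $C^1$ in $t$ with values in $B(X)\cap\psho$, I would apply a Demailly regularization in $x$, depending smoothly on $t$, to obtain smooth $U^k(t,\cdot)\in\cH$ decreasing uniformly to $U(t,\cdot)$, with $\partial_t U^k\to\dot u$ uniformly on $X$ for a.e.\ $t$. Since the residual differences $\phi_k-U^k(0,\cdot)$ and $\psi_k-U^k(T,\cdot)$ are uniformly small and $\cH$ is open in $C^\infty(X)$, a smooth $t$-dependent, $x$-independent correction produces $u_k\in\cH$ with $u_k(0)=\phi_k$, $u_k(T)=\psi_k$ and $\dot u_k\to\dot u$ in the same uniform-in-$x$, a.e.-in-$t$ sense, with uniform sup bound.

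Corollary 7.4 applied to $u_k,v_k$ then yields $\int_0^T L\circ\dot u_k\ge\int_0^T L\circ\dot v_k$ for every $k$, and I would pass to the limit via Lemma 3.5. On the left, $u_k\to u$ uniformly and $\dot u_k(t)\to\dot u(t)$ uniformly for a.e.\ $t$, while $\dot u(t)$ is quasicontinuous as a uniform limit of differences of $\omega$-plurisubharmonic functions; hence $L(\dot u_k(t))\to L(\dot u(t))$. On the right, $v_k\searrow v$ uniformly and $\dot v_k(t)\to\dot v(t)$ in capacity with quasicontinuous limit, giving $L(\dot v_k(t))\to L(\dot v(t))$. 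The equi-Lipschitz bound of Lemma 3.3 combined with the uniform sup bounds on $\dot u_k,\dot v_k$ provides the domination needed to exchange limit and integral, and the desired inequality for $u,v$ follows. The main obstacle I anticipate is the capacity convergence $\dot v_k\to\dot v$: the uniform decrease $v_k\searrow v$ of the potentials themselves is immediate, but controlling time derivatives under endpoint approximation is a genuine pluripotential-theoretic issue, and it is here that the hypothesis that $v$ be $C^1$ enters in an essential way, forcing one to carefully import stability estimates for weak geodesics from the literature to upgrade weak convergence of the derivatives to convergence in capacity.
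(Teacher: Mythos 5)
Your overall reduction --- regularize the endpoints into $\cH$, connect them by weak geodesics in $\cH^{1\bar 1}$, approximate $u$ by paths in $\cH$ with the same endpoints, invoke Corollary 7.4, and pass to the limit with Lemmas 3.3 and 3.4 --- is exactly the paper's. But two of your steps, as stated, do not work, and the devices that replace them are the real content of Section 8. First, the approximation of $u$: you regularize $U(t,x)$ in $x$ ``depending smoothly on $t$'' and assert both that $U^k(t,\cdot)$ decreases to $u(t)$ \emph{uniformly} and that $\partial_t U^k\to\dot u$ uniformly on $X$. Neither is available: elements of $B(X)\cap\psho$ need not be continuous, so a decreasing sequence of smooth potentials cannot converge to them uniformly, and no form of Demailly-type regularization is known to commute with differentiation in an external parameter so as to control the $t$-derivatives. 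Lemma 8.2 sidesteps both problems by first replacing $u$ with a piecewise linear interpolant in $t$ (whose action converges to $\cL(u)$ using only uniform continuity of $\dot u$ into $B(X)$), and then regularizing only the finitely many node values $u(t_i)$ by decreasing sequences $z_{ij}\in\cH$; the time derivatives of the resulting paths are difference quotients $(z_{ij}-z_{i-1,j})/(t_i-t_{i-1})$, whose convergence in capacity follows from the decreasing convergence of the $z_{ij}$ via \cite[Proposition 9.11]{GZ2}, after which Lemma 3.4 applies.

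Second, the convergence $\dot v_k\to\dot v$: you defer this to unspecified ``stability estimates from the literature'' and correctly flag it as the main obstacle, but you do not supply an argument, and in fact no imported estimate is needed. Lemma 8.4 proves the stronger statement that $\dot v_j(t)\to\dot v(t)$ in the sup norm of $B(X)$ at every $t$ where $v$ is differentiable, by an elementary sandwich: differentiability of $v$ at $t$ controls the difference quotients of $v$, the convergence $v_j\to v$ transfers this control to $v_j$, and convexity of $s\mapsto v_j(s)(x)$ traps $\dot v_j(t)$ between the backward and forward difference quotients. This is precisely where the $C^1$ hypothesis on $v$ enters. The paper also uses the conservation law (Lemmas 7.3 and 8.4) that $L\circ\dot v_j$ is constant in $t$, which makes the passage to the limit in $\int_0^T L\circ\dot v_j$ immediate. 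Without these two arguments your proof is incomplete.
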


This will be derived from Corollary 7.4 by approximation.

\begin{lem}    
Suppose $u:[0,T]\to B(X)\cap \psho$ is a piecewise $C^1$ path, and $w_j,w'_j\in \cH$ decrease to $u(0)$, respectively,
$u(T)$, as $j\to\infty$. Then there are a sequence $J\subset\bN$ 
and for $j\in J$ piecewise linear $u_j:[0,T]\to \cH$ such that 
$u_j(0)=w_{j}$, $u_j(T)=w_{j}'$, and $\int_0^TL\circ \dot u_j\to\int_0^TL\circ\dot u$ as $J\ni j\to\infty$.
\end{lem}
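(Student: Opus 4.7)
The plan is to build $u_j$ by connecting $w_j$ and $w'_j$ through smooth regularizations of $u$ at a partition of $[0,T]$, using straight segments, and to use the strong continuity of $L$ (Lemma 3.4) to control the action. Two convexity facts—that $\cH$ and $B(X)\cap\psho$ are both convex as subsets of the ambient vector spaces—ensure the interpolants stay in the right space.

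Fix $N\ge 2$ and $t_i=iT/N$. By Demailly--B\l ocki regularization, for each $1\le i\le N-1$ choose $w^{(i)}_m\in\cH$ decreasing to $u(t_i)$ as $m\to\infty$. Let $u^{(N)}_{j,m}:[0,T]\to\cH$ be the piecewise linear path with vertices $w_j,\ w^{(1)}_m,\dots,w^{(N-1)}_m,\ w'_j$ at the partition points, so that its velocity on $(t_i,t_{i+1})$ is a constant $\xi_{j,m,i}\in B(X)$ given by $(N/T)$ times the successive difference of vertices.

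The argument proceeds by three successive limits controlled by Lemma 3.4. First, with $j$ and $N$ fixed, let $m\to\infty$. Each vertex $w^{(i)}_m$ decreases to the bounded $\omega$--psh $u(t_i)$, hence converges in capacity by Guedj--Zeriahi, so $\xi_{j,m,i}$ converges in capacity to a quasicontinuous difference of $\omega$--psh functions. For each $t$, the base point $u^{(N)}_{j,m}(t)$ is a convex combination of the $w^{(i)}_m$'s and therefore decreases in $m$. Lemma 3.4 then gives pointwise convergence $L(\dot u^{(N)}_{j,m}(t))\to L(\dot u^{(N)}_{j,\infty}(t))$, and the uniform Lipschitz bound of Lemma 3.3 combined with the invariance of $L$ at the zero vector provides an integrable majorant; dominated convergence yields
\[
\int_0^T L\circ\dot u^{(N)}_{j,m}\longrightarrow \int_0^T L\circ\dot u^{(N)}_{j,\infty},
\]
where $u^{(N)}_{j,\infty}$ is the linear interpolant of $w_j,u(t_1),\dots,u(t_{N-1}),w'_j$ in $B(X)\cap\psho$. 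Next, sending $j\to\infty$ with $N$ fixed, the same reasoning—now using the decreasing of $w_j, w'_j$ to $u(0), u(T)$—gives convergence to $\int_0^T L\circ\dot u^{(N)}$, with $u^{(N)}$ the piecewise linear interpolant of $u(0),u(t_1),\dots,u(T)$. Finally, as $N\to\infty$, $u^{(N)}\to u$ and $\dot u^{(N)}\to\dot u$ uniformly on $X$ since $u$ is $C^1$ into $B(X)$, so a last application of Lemma 3.4 and dominated convergence gives $\int_0^T L\circ\dot u^{(N)}\to\int_0^T L\circ\dot u$. A standard diagonal extraction then produces the subsequence $J\subset\bN$ and the paths $u_j=u^{(N(j))}_{j,m(j)}$ with $N(j),m(j)\to\infty$ along $J$; the prescribed endpoints $w_j, w'_j$ are built into the construction.

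The main obstacle is that the velocities $\xi_{j,m,i}$ are differences of $\omega$--psh functions, so they converge only in capacity and are neither monotone nor uniformly convergent. Lemma 3.4 is tailored precisely to this situation, provided the base points converge monotonically or uniformly and the limit velocity is quasicontinuous, and both conditions must be verified carefully at each of the three limit stages. A secondary point, to be imported from pluripotential theory, is that bounded decreasing sequences of $\omega$--psh functions converge in capacity; this underpins the capacity convergence of the $\xi_{j,m,i}$.
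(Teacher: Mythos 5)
Your proof is correct and follows essentially the same route as the paper's: piecewise linear interpolation through decreasing $\cH$--regularizations at partition points, capacity convergence of the difference quotients fed into Lemma 3.4, dominated convergence via the equi--Lipschitz bound of Lemma 3.3, and a diagonal extraction (the paper organizes this as two successive limits rather than your three). The only adjustment needed is to include the finitely many points where $u$ fails to be $C^1$ among the partition points $t_i$, since with an equal partition the claim that $\dot u^{(N)}\to\dot u$ uniformly would fail on the subintervals straddling a breakpoint.
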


As said, at points where $u, u_j$ are not differentiable, $\dot u,\dot u_j$ mean right derivatives.

\begin{proof}
Choose $t_0=0<t_1<\dots<t_p=T$ so that $u$ is $C^1$ on each $[t_{i-1}, t_i]$. 
Suppose first that $u$ is even linear on $[t_{i-1}, t_i]$. In this case $J$ will be all of $\bN$. A simple special instance of 
regularization, see \cite{De, DP} and especially \cite{BK}, provides $z_{ij}\in \cH$ such that 
$z_{ij}$ decreases to $u(t_i)$ as $j\to\infty$ for $i=0,\dots,p$. We take $z_{0j}=w_j$ and $z_{pj}=w'_j$, and arrange that 
the $z_{ij}$ are uniformly bounded. Linearly interpolating on $[t_{i-1}, t_i]$ between $z_{i-1,j}$ and $z_{ij}$ we obtain the functions 
$u_j$ sought. Indeed, $u_j(t)$ decreases to $u(t)$, and
\[
\dot u_j(t)=\frac{z_{ij}-z_{i-1,j}}{t_i-t_{i-1}}\in T_{u_j(t)}\cH,\qquad\text{when } t\in [t_{i-1},t_i),
\]
are uniformly bounded and  tend to $\dot u(t)$ as $j\to\infty$. Since 
$\omega$--plurisubharmonic functions are quasicontinuous  \cite[Corollary 9.12]{GZ2}, so are the difference quotients
$\dot u(t)$. According to 
\cite[Proposition 9.11]{GZ2} this 
implies convergence in capacity, whence $\lim_jL(\dot u_j(t))=L(\dot u(t))$ by Lemma 3.4. Since $\dot u_j(t)$ are uniformly 
bounded, so are $L(\dot u_j(t))$ by equi--Lipschitz  continuity, Lemma 3.3. The dominated convergence 
theorem gives therefore $\lim_j\int_0^TL\circ \dot u_j=\int_0^TL\circ\dot u$.

For general $u$, partition each $[t_{i-1}, t_i]$ into $k$ equal parts. Construct $v_k:[0,T]\to B(X)\cap\psho$ that agrees with 
$u$ at each partition point, and is linear in between. Then $v_k\to u$ and $\dot v_k\to\dot u$ uniformly, because $\dot u$ is 
uniformly continuous on $[t_{i-1}, t_i)$. Hence $L\circ\dot v_k\to L\circ\dot u$ by Lemma 3.4 and, again by dominated
convergence, $\int_0^TL\circ\dot v_k\to\int_0^T L\circ\dot u$.
By what we have already proved, for each $k$ we can find $j=j_k>j_{k-1}$ and piecewise linear $u_j:[0,T]\to \cH$ 
such that $u_j(0)=w_j$, $u_j(T)=w'_j$, and
 \[
 \Big|\int_0^TL\circ\dot u_j-\int_0^TL\circ\dot v_k\Big|<\frac1k.
 \]
 Thus $J=\{j_1,j_2,\dots\}$ will do.

\end{proof}

\begin{lem}   
Let $v,v_j: [a, b]\to \psho$ be weak geodesics. If $v_j(t)$ decreases to $v(t)$ when $t=a, b$, then $v_j(t)$ decreases to $v(t)$ for all $t\in [a,b]$.
\end{lem}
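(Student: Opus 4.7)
The plan is to exploit the Perron envelope characterization in (4.1), which is manifestly monotone in the endpoint data. First I would verify that the sequence $v_j$ itself decreases in $j$ at every $t\in[a,b]$. Since $v_{j+1}$ is a subgeodesic whose endpoint limits satisfy $\lim_{t\to a} v_{j+1}(t)\le v_{j+1}(a)\le v_j(a)$ and similarly at $b$, it belongs to the family over which the supremum defining $v_j$ is taken, so $v_{j+1}\le v_j$ pointwise on $(a,b)$. Hence the pointwise limit $v_\infty(t):=\lim_j v_j(t)$ is well-defined and matches $v$ at $t=a,b$ by hypothesis. Applying the same argument with $v$ in place of $v_{j+1}$ — using $v(a)\le v_j(a)$ and $v(b)\le v_j(b)$ — yields $v\le v_j$ for every $j$, so $v\le v_\infty$; in particular $v_\infty\not\equiv-\infty$.

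The remaining task is to prove $v_\infty\le v$, and it suffices to show that $v_\infty$ is itself a subgeodesic whose boundary limits are majorized by $v(a)$ and $v(b)$, since then the envelope characterization of $v$ forces $v_\infty\le v$. Put $V_j(s,x)=v_j(\mathrm{Re}\,s)(x)$ and $V_\infty(s,x)=v_\infty(\mathrm{Re}\,s)(x)$ on $S_{ab}\times X$. Each $V_j$ is $\pi^\star\omega$-plurisubharmonic, and $V_j\searrow V_\infty$; because $V_\infty\ge V>-\infty$, the decreasing limit $V_\infty$ is still $\pi^\star\omega$-plurisubharmonic, so $v_\infty$ is a subgeodesic in the sense of Definition 4.1. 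For the endpoint at $a$, the pointwise inequality $v_\infty(t)\le v_j(t)$ on $(a,b)$ together with the subgeodesic condition $\lim_{t\to a} v_j(t)\le v_j(a)$ gives $\lim_{t\to a} v_\infty(t)\le v_j(a)$ for each $j$, and taking the infimum over $j$ yields $\lim_{t\to a} v_\infty(t)\le \inf_j v_j(a)=v(a)$. The same reasoning at $b$ completes the endpoint bound, and we conclude $v_\infty\le v$.

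There is no serious obstacle here: the argument is essentially a direct application of the Perron-envelope definition, and the only point that must be checked separately is that the candidate limit $V_\infty$ is not identically $-\infty$, which is supplied for free by the lower bound $v_\infty\ge v$ established in the first step. The interchange of the limit $t\to a$ with the limit $j\to\infty$ is avoided by keeping $j$ fixed until after the boundary limit has been taken, so no continuity hypothesis beyond what is built into the definition of a weak geodesic is needed.
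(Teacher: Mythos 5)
Your argument is correct. The paper itself offers no proof of this lemma: it simply cites \cite[Proposition 3.15]{D4}, so what you have written is a self-contained substitute, and it is in fact the standard envelope argument behind that cited proposition. The two halves are exactly right: monotonicity of the Perron envelope (4.1) in the endpoint data gives $v\le v_{j+1}\le v_j$, and then stability of $\pi^*\omega$-plurisubharmonicity under decreasing limits that are not identically $-\infty$, together with the inherited boundary bounds, places $v_\infty=\inf_j v_j$ among the competitors for $v$, forcing $v_\infty\le v$. Two small points deserve a remark, though neither is a gap relative to the paper's own level of detail. First, you use that a weak geodesic is a competitor in its own envelope, i.e.\ that $\lim_{t\to a}v_j\le v_j(a)$; this is true but is worth justifying: each competitor $w$ is convex in $t$ for fixed $x$ with boundary limits below the endpoint data, hence is dominated by the affine interpolation of $v_j(a)(x)$ and $v_j(b)(x)$, and therefore so is the supremum. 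Second, the supremum in (4.1) is a priori only $\pi^*\omega$-plurisubharmonic after upper semicontinuous regularization; the paper glosses over this (citing \cite[section 3.1]{D4} for the fact that the weak geodesic is itself a subgeodesic), and your proof inherits the same convention, which is fine in context. What your route buys is transparency --- it makes visible that the lemma is purely a consequence of the monotone structure of the Perron definition --- at the cost of re-deriving facts the paper prefers to outsource.
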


This is \cite[Proposition 3.15]{D4}.---There is one more ingredient that goes into the proof of Theorem 8.1. 

\begin{lem}    
Consider a weak geodesic $v:[0,T]\to B(X)\cap\psho$. If it is right differentiable at $t\in[0,T)$, then the right derivative 
$\dot v(t)$ is 
quasicontinuous. Moreover, $L\circ\dot v$ is constant on the subset $D\subset(0,T)$ where $v$ is differentiable. Finally,
if $v_j:[0,T]\to \cH^{1\bar1}$ are weak geodesics that decrease to $v$, then $L\circ\dot v_j\to L\circ\dot v$ on $D$. 
\end{lem}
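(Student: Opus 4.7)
I would prove the three claims in the order (1), (3), (2), with (2) following from (3) applied to a specific approximating sequence.

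For part (1), the hypothesis ``right differentiable at $t$'' means the difference quotients $\xi_h=(v(t+h)-v(t))/h$ converge to $\dot v(t)$ in the Banach space $B(X)$, i.e., uniformly on $X$. For each $h>0$ the quotient $\xi_h$ is the difference of two bounded $\omega$-plurisubharmonic functions divided by $h$, hence quasicontinuous by \cite[Theorem 3.5]{BT} and \cite[Corollary 9.12]{GZ2}. Since the uniform limit of quasicontinuous functions is quasicontinuous, $\dot v(t)$ is quasicontinuous.

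For part (3), fix $t\in D$. The endpoints $v_j(0),v_j(T)$ decrease to bounded limits, so they are uniformly bounded; Chen's uniform Lipschitz estimate for weak geodesics in $\cH^{1\overline 1}$ with uniformly bounded endpoints (Theorem 4.2 and its $\cH^{1\overline 1}$-extension) gives $\|\dot v_j(s)\|_\infty\le K$ uniformly in $j$ and $s$. Since $v_j$ is a subgeodesic, each function $t\mapsto v_j(t)(x)$ is convex; similarly $v(\cdot)(x)$ is convex, and because Banach-valued differentiability of $v$ at $t$ implies pointwise differentiability, $v(\cdot)(x)$ is differentiable at $t$ for every $x\in X$. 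A standard fact about convex functions on the real line---if $g_j\searrow g$ pointwise and $g$ is differentiable at $t$ then $g_j'(t)\to g'(t)$---then yields $\dot v_j(t)(x)\to\dot v(t)(x)$ for every $x\in X$. Combining the uniform bound, this pointwise convergence, the quasicontinuity of the limit $\dot v(t)$ from part (1), and the quasicontinuity of each $\dot v_j(t)$ (which itself follows as in part (1), because $v_j\in\cH^{1\overline 1}$ makes the difference quotients continuous in $x$), \cite[Proposition 9.11]{GZ2}---the same device used in the proof of Lemma 8.2---upgrades this to convergence in capacity $\dot v_j(t)\to\dot v(t)$. Since also $v_j(t)\searrow v(t)$ with $v(t)$ bounded, Lemma 3.4 applies and gives $L(\dot v_j(t))\to L(\dot v(t))$, which is (3).

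For part (2), I would regularize the bounded $\omega$-psh endpoints $v(0),v(T)$ to obtain $w_j,w_j'\in\cH$ with $w_j\searrow v(0)$ and $w_j'\searrow v(T)$, uniformly sup-norm bounded (possible by standard regularization of bounded $\omega$-psh functions). Let $\tilde v_j:[0,T]\to\cH^{1\overline 1}$ be the weak geodesic with these endpoints; Lemma 8.3 forces $\tilde v_j\searrow v$. By Lemma 7.3 applied to each $\tilde v_j$, the value $L(\dot{\tilde v}_j(t))=c_j$ is independent of $t\in[0,T]$. Applying part (3) to the sequence $\tilde v_j$, for each $t\in D$ we get $L(\dot v(t))=\lim_j c_j$, a number independent of $t$. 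Hence $L\circ\dot v$ is constant on $D$.

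The main obstacle is the upgrade from pointwise convergence of $\dot v_j(t)$ to convergence in capacity; everything else (convexity, uniform bounds, quasicontinuity of the limit) is either standard or follows from part (1), but to invoke \cite[Proposition 9.11]{GZ2} cleanly one must verify that the sequence $\dot v_j(t)$ itself consists of (uniformly bounded) quasicontinuous functions, which requires exploiting that the $v_j$ are $\cH^{1\overline 1}$ weak geodesics and not merely elements of $B(X)\cap\psho$.
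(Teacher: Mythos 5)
Your treatment of the quasicontinuity claim and of the constancy claim coincides with the paper's (the paper also regularizes the endpoints, joins them by weak geodesics in $\cH^{1\bar 1}$, and combines Lemma 8.3, Lemma 7.3 and the convergence statement). The gap is in part (3), at exactly the step you flag as the main obstacle: pointwise convergence of a uniformly bounded sequence of quasicontinuous functions to a quasicontinuous limit does \emph{not} imply convergence in capacity, and \cite[Proposition 9.11]{GZ2} does not assert that it does. That proposition is a Dini-type statement for \emph{monotone} sequences of quasicontinuous functions; in the proof of Lemma 8.2 it applies because the difference quotients there are differences of two decreasing sequences of $\omega$-plurisubharmonic functions. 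Your sequence $\dot v_j(t)$ is neither monotone in $j$ nor exhibited as such a difference, so the citation does not cover it; and since the Monge--Amp\`ere capacity is a supremum over an uncountable family of Monge--Amp\`ere measures, Egorov's theorem does not survive the passage from measure to capacity, so no general principle rescues the step.

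The repair uses only ingredients you already have, deployed in the norm of $B(X)$ rather than pointwise: since $t\in D$, the difference quotients $(v(t\pm s)-v(t))/(\pm s)$ converge to $\dot v(t)$ \emph{uniformly on $X$}; fix $s$ so that both lie within $\varepsilon$ of $\dot v(t)$ in $\|\cdot\|$, and pass to the limit $j\to\infty$ in your sandwich
\[
\frac{v_j(t-s)-v_j(t)}{-s}\;\le\;\dot v_j(t)\;\le\;\frac{v_j(t+s)-v_j(t)}{s},
\]
to get $\|\dot v_j(t)-\dot v(t)\|\le\varepsilon$ for large $j$. Uniform convergence trivially implies convergence in capacity, and Lemma 3.4 then finishes as you intended; this is the paper's argument. (If one prefers not to worry about whether $v_j(r)\to v(r)$ uniformly, the same sandwich can instead be passed to the limit in capacity, because each side \emph{is} a difference of decreasing sequences of quasicontinuous functions with bounded, hence quasicontinuous, limits, so \cite[Proposition 9.11]{GZ2} does apply to the two sides.) Two minor points: the uniform bound $\|\dot v_j\|\le K$ you import from Chen's estimates is unnecessary---the sandwich already supplies it---and is not obviously available anyway, since the $v_j$ in the statement are arbitrary weak geodesics in $\cH^{1\bar 1}$ whose endpoints need not lie in $\cH$; and the quasicontinuity of the individual $\dot v_j(t)$, which you labor to secure, is not needed once the convergence is obtained in norm.
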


\begin{proof}
As said, plurisubharmonic functions are quasicontinuous, hence so are the difference
quotients $(v(t+s)-v(t))/s$, and their uniform limit, $\dot v(t)$. Next we turn to the last statement, that we reduce to 
Lemma 3.4. First we fix $t\in D$ and show that $\dot v_j(t)\to\dot v(t)$ in sup norm $\|\ \|$. If $\varepsilon>0$, 
there is an $s>0$ such that
\[\Big\|\dot v(t)-\frac{v(t\pm s)-v(t)}{\pm s}\Big\| <\varepsilon,\]
and so there is a $j_0$ such that for $j>j_0$
\[\Big\|\dot v(t)-\frac{v_j(t\pm s)-v_j(t)}{\pm s}\Big\| <\varepsilon.\]
Convexity implies
\[
\frac{v_j(t- s)-v_j(t)}{- s}\le\dot v_j(t)\le\frac{v_j(t+s)-v_j(t)}{s},
\]
whence $\|\dot v_j(t)-\dot v(t)\|<\varepsilon$.

Given that $v_j(t)$ decreases to $v(t)$, that $\dot v_j(t)\to\dot v(t)$ in $B(X)$,  and that $\dot v(t)$ is quasicontinuous,
$t\in D$, Lemma 3.4 implies $L\circ\dot v_j\to L\circ\dot v$ on $D$. 

To prove the second statement, construct $w_j,w'_j\in \cH$ that decrease to $v(0)$, $v(T)$, and let 
$v_j:[0,T]\to \cH^{1\bar 1}$ 
be the weak geodesic that joins them. By Lemma 8.3 $v_j$ decreases to $v$ and by Lemma 7.3 $L\circ\dot v_j$ is 
constant. According to what we just proved, $L\circ\dot v_j\to L\circ\dot v$ on $D$, and $L\circ \dot v$ must be 
constant there.
\end{proof}

In particular, if $v:[a,b]\to B(X)\cap\psho$ is a weak geodesic of class $C^1$, then $L\circ\dot v$ is constant on $(a,b)$.
Using this with different Lagrangians one can show that in fact $\dot v(t)\in B(X,\mu_{v(t)})$ are equidistributed for 
$a<t<b$. Darvas points 
out in \cite[p. 1305]{D3} that for general weak geodesics even in $C(X)\cap\psho$ this is no longer true for $t=a,b$. 

\begin{proof}[Proof of Theorem 8.1] Construct $w_j,w'_j\in\cH$ decreasing to $u(0),u(T)$, and
let $u_j:[0,T]\to \cH$, $j\in J$, be as in Lemma 8.2.
Let $v_j:[0,T]\to \cH^{1\bar 1}$ be the weak geodesic connecting $w_j$ and $w'_j$, $j\in J$. 
By Corollary 7.4
\begin{equation}    
\int^T_0 L\circ\dot u_j\ge \int^T_0 L\circ\dot v_j.
\end{equation}
The integral on the left tends to  $\int_0^TL\circ\dot u$ as $j\to\infty$. The integrand on the right is constant for each $j$, and 
on $(0,T)$ converges unformly to $L\circ\dot v$ by Lemma 8.4. Hence $\lim_{J\ni j\to\infty}\int^T_0 L\circ\dot v_j=\int^T_0 L\circ \dot v$ and letting $j\to \infty$ in (8.1) we obtain the theorem.
\end{proof}

\section{Convexity of the action}     

In this section the Lagrangian $L:T^\infty\cE(\omega)\to\bR$ is strongly continuous, invariant, and convex. We first investigate the least action, cf. (5.3), between two $\varepsilon$--geodesics, and then by letting $\varepsilon\to 0$ we prove Theorem 5.3, which was:
\begin{thm}    
If $u,v:[a,b]\to B(X)\cap\psho$ are weak geodesics, then for any $S\in(0,\infty)$ the function $\cL_S(u,v):[a,b]\to \bR$ is convex.
\end{thm}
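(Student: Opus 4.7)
My plan is to mirror the three-stage approach used for the principle of least action (Theorems 7.1, 7.4, 8.1): first prove convexity when $u,v$ are $\varepsilon$--geodesics in $\cH$ for the same $\varepsilon$, then pass to weak geodesics in $\cH^{1\overline 1}$ by sending $\varepsilon\to 0$, and finally handle weak geodesics in $B(X)\cap\psho$ by decreasing approximation.

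The heart of the argument is Stage 1. Pick $t_0<t_1$ in $[a,b]$, $\lambda\in(0,1)$, $\tau=(1-\lambda)t_0+\lambda t_1$, and $\eta>0$. Using the definition of $\cL_S$ together with Lemma 8.2, I choose piecewise linear paths $p_0,p_1:[0,S]\to\cH$ with $p_i(0)=u(t_i)$, $p_i(S)=v(t_i)$ and $\cL(p_i)<\cL_S(u(t_i),v(t_i))+\eta$. For each $s\in[0,S]$ let $\gamma_s:[t_0,t_1]\to\cH$ denote the unique $\varepsilon$--geodesic from $p_0(s)$ to $p_1(s)$; by Theorem 4.2 the family $\gamma_s$ is smooth jointly in $(s,t)$ away from the finitely many kinks of $p_0,p_1$. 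Uniqueness forces $\gamma_0=u|_{[t_0,t_1]}$ and $\gamma_S=v|_{[t_0,t_1]}$, so the piecewise $C^1$ path $r(s)=\gamma_s(\tau)$ joins $u(\tau)$ to $v(\tau)$ and is admissible for $\cL_S(u(\tau),v(\tau))$.

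The vector field $\xi^s(t)=\partial_s\gamma_s(t)$ is an $\varepsilon$--Jacobi field along $\gamma_s$ with boundary values $\xi^s(t_i)=\dot p_i(s)$, and $\dot r(s)=\xi^s(\tau)$. Theorem 6.1 makes $t\mapsto L(\xi^s(t))$ convex on $[t_0,t_1]$, so for a.e.\ $s$
\[
L(\dot r(s))=L(\xi^s(\tau))\le(1-\lambda)L(\dot p_0(s))+\lambda L(\dot p_1(s)).
\]
Integrating over $[0,S]$ and then letting $\eta\to 0$ gives convexity of $t\mapsto\cL_S(u(t),v(t))$ along pairs of $\varepsilon$--geodesics.

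For Stage 2, given weak geodesics $u,v:[a,b]\to\cH^{1\overline 1}$, Chen's theorem furnishes $\varepsilon$--geodesics $u^\varepsilon,v^\varepsilon$ with the same endpoints that converge uniformly as $\varepsilon\to 0$. For Stage 3, Lemma 8.3 lets me sandwich weak geodesics $u,v$ in $B(X)\cap\psho$ between weak geodesics $u_j,v_j$ in $\cH^{1\overline 1}$ whose endpoints in $\cH$ decrease to those of $u,v$, giving pointwise decrease on all of $[a,b]$. In each case convexity is preserved under pointwise limits, provided $\cL_S$ at the approximate endpoints converges to $\cL_S$ at the limit endpoints. The main obstacle is therefore this stability of the least action in its endpoint arguments: upper semicontinuity I would obtain by transporting a near-optimal path for the limit endpoints to a path between the approximating endpoints whose action is controlled via Lemma 3.4 and the equi-Lipschitz bound of Lemma 3.3; lower semicontinuity, more delicate, I would attack through Lemma 9.3 (finiteness of $\cL_S$) combined with a lower semicontinuity/compactness argument for the action functional, patterned on the techniques of Section 8.
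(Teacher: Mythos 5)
Your Stage 1 is correct and is essentially the paper's Lemma 9.2 in a slightly different guise. The paper builds the two--parameter family by taking $\delta$--geodesic cross--sections $w^\delta_\alpha, w^\delta_\beta$ joining $u(t_i)$ to $v(t_i)$, letting $\delta\to0$, and invoking Theorem 8.1 to identify $\lim_\delta\cL(w^\delta_{\alpha})$ with $\cL_S(u(\alpha),v(\alpha))$; you instead take near--optimal piecewise linear cross--sections (legitimately reduced to paths in $\cH$ via Lemma 8.2) and work directly with the infimum defining $\cL_S$. Both rest on the same engine --- Theorem 6.1 applied to the $\var$--Jacobi fields $\partial_s\gamma_s$ plus uniqueness in Theorem 4.2 to pin down $\gamma_0=u$, $\gamma_S=v$ --- and your version has the mild advantage of not needing the principle of least action at this stage. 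Your Stages 2--3 (uniform convergence $u^\var\to u$ for endpoints in $\cH$, then decreasing approximation via Lemma 8.3, then pointwise limits of convex functions) also match the paper's architecture exactly.

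The gap is in what you yourself flag as ``the main obstacle'': the convergence $\cL_S(w_j,w'_j)\to\cL_S(w,w')$ for decreasing or uniformly convergent endpoint sequences. This is precisely the content of the paper's Lemma 9.3 --- that lemma asserts the convergence (9.3), not merely finiteness, so if you are permitted to cite it your proof closes; but as an independent argument your sketch of the harder half does not work. The inequality $\cL_S(w,w')\le\liminf_j\cL_S(w_j,w'_j)$ cannot be obtained by ``lower semicontinuity/compactness for the action functional'': the Lagrangian is merely convex and equi--Lipschitz on bounded sets, with no coercivity, and the space of competing paths carries no compactness, so the direct method has nothing to bite on. The paper's mechanism is quite different: by Theorem 8.1 and Lemma 7.3 the weak geodesic $v_j$ joining $w_j,w'_j$ realizes $\cL_S(w_j,w'_j)=S\,L(\dot v_j(0))$; the vectors $\dot v_j(0)$ are uniformly bounded by comparing $v_j$ with affine subgeodesics; and one then manufactures a competitor for $(w,w')$ by splicing short linear segments $w\to w_j$ and $w'_j\to w'$ onto a reparametrized $v_j$, the spliced pieces costing $o(1)$ by equi--Lipschitz continuity of $L$ near the zero section. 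Without this (or an equivalent) construction, the passage to the limit in Stages 2 and 3 is unsupported.
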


The $\varepsilon$--variant is as follows:

\begin{lem}    
If $u,v:[a,b]\to\cH$ are $\varepsilon$--geodesics, then for any $S\in (0,\infty)$ the function $\cL_S(u,v):[a,b]\to\bR$ is 
convex.
\end{lem}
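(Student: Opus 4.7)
The plan is to prove convexity of $\cL_S(u(\cdot),v(\cdot))$ by the classical strategy of constructing, at each intermediate parameter value, a connecting path whose action is dominated by the convex combination of near-minimizing actions at the endpoints. The key technical input is Theorem 6.1 (convexity of $L$ along $\var$-Jacobi fields); the additional wrinkle is that the infimum defining $\cL_S$ is taken over paths in $B(X)\cap\psho$, so an approximation through paths in $\cH$ is also needed.

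Fix $\tau<\tau'$ in $[a,b]$, $\lambda\in(0,1)$, and put $\tau_\lambda=(1-\lambda)\tau+\lambda\tau'$. Given $\delta>0$, choose piecewise $C^1$ paths $w,w':[0,S]\to B(X)\cap\psho$ from $u(\tau)\to v(\tau)$ and $u(\tau')\to v(\tau')$ with action within $\delta$ of the infimum. Since the four endpoints already lie in $\cH$, Lemma 8.2 (applied with constant approximants $w_j\equiv u(\tau)$, $w'_j\equiv v(\tau)$, etc.) produces piecewise linear paths $w_j,w'_j:[0,S]\to\cH$ having \emph{the same} endpoints as $w,w'$, and with $\cL(w_j)\to\cL(w)$, $\cL(w'_j)\to\cL(w')$ along a subsequence.

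Now, for each $s\in[0,S]$, let $\gamma^j_s:[\tau,\tau']\to\cH$ be the unique $\var$-geodesic with $\gamma^j_s(\tau)=w_j(s)$ and $\gamma^j_s(\tau')=w'_j(s)$ furnished by Theorem 4.2, and define
\[
z_j:[0,S]\to\cH,\qquad z_j(s)=\gamma^j_s(\tau_\lambda).
\]
The endpoints of $z_j$ are correct: since $u|_{[\tau,\tau']}$ is itself an $\var$-geodesic joining $w_j(0)=u(\tau)$ to $w'_j(0)=u(\tau')$, uniqueness in Theorem 4.2 gives $\gamma^j_0=u|_{[\tau,\tau']}$, hence $z_j(0)=u(\tau_\lambda)$; similarly $z_j(S)=v(\tau_\lambda)$. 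The smooth dependence on endpoints in Theorem 4.2 ensures that on each linearity piece of $(w_j,w'_j)$ the family $(s,t)\mapsto\gamma^j_s(t)$ is jointly smooth, so $z_j$ is piecewise $C^1$.

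The crucial observation is that for each fixed $s_0$, varying $s$ through a neighborhood of $s_0$ gives a smooth one-parameter family of $\var$-geodesics, so $t\mapsto\partial_s\gamma^j_s(t)\big|_{s=s_0}$ is an $\var$-Jacobi field along $\gamma^j_{s_0}$. By Theorem 6.1, the function $t\mapsto L\bigl(\partial_s\gamma^j_s(t)\bigr)$ is convex on $[\tau,\tau']$. Evaluating this convexity at $\tau_\lambda=(1-\lambda)\tau+\lambda\tau'$ and noting that $\partial_s\gamma^j_s(\tau)=\dot w_j(s)$, $\partial_s\gamma^j_s(\tau')=\dot w_j'(s)$, $\partial_s\gamma^j_s(\tau_\lambda)=\dot z_j(s)$, we obtain for every $s$ outside the finite set of breakpoints
\[
L(\dot z_j(s))\le (1-\lambda)L(\dot w_j(s))+\lambda L(\dot w_j'(s)).
\]
Integrating over $[0,S]$ and using $\cL_S(u(\tau_\lambda),v(\tau_\lambda))\le\cL(z_j)$,
\[
\cL_S(u(\tau_\lambda),v(\tau_\lambda))\le(1-\lambda)\cL(w_j)+\lambda\cL(w_j').
\]
Letting $j\to\infty$ and then $\delta\to 0$ yields the desired convexity inequality. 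The main obstacle is already handled by Theorem 6.1; the only bookkeeping concern is passing between $B(X)\cap\psho$-valued minimizers and $\cH$-valued paths (needed because the $\var$-geodesic construction requires endpoints in $\cH$), which is taken care of by Lemma 8.2.
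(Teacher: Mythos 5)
Your proof is correct, and while it rests on the same engine as the paper's --- a two--parameter family in $\cH$ whose $t$--slices are $\var$--geodesics, so that $\partial_s$ of the family is an $\var$--Jacobi field and Theorem 6.1 makes $t\mapsto L(\partial_s\gamma_s(t))$ convex, after which one evaluates at $\tau_\lambda$ and integrates in $s$ --- it closes the argument differently. The paper chooses the transverse boundary curves $s\mapsto\gamma_s(\tau)$ and $s\mapsto\gamma_s(\tau')$ to be $\delta$--geodesics joining $u(\tau)$ to $v(\tau)$ and $u(\tau')$ to $v(\tau')$, and then lets $\delta\to 0$: the $\delta$--geodesics converge with their derivatives to the weak geodesics, whose actions are identified with $\cL_S(u(\tau),v(\tau))$ and $\cL_S(u(\tau'),v(\tau'))$ by Lemma 3.4 together with the already established principle of least action (Theorem 8.1). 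You instead take the boundary curves to be arbitrary near--minimizers of the action, pushed into $\cH$ by Lemma 8.2 (legitimately invoked with constant approximants, since the four corner points $u(\tau),v(\tau),u(\tau'),v(\tau')$ already lie in $\cH$), so the right--hand side is within $\delta$ of the desired convex combination of least actions by fiat; your remark that uniqueness in Theorem 4.2 forces $\gamma^j_0=u|_{[\tau,\tau']}$, hence $z_j(0)=u(\tau_\lambda)$ and $z_j(S)=v(\tau_\lambda)$, is exactly what makes $z_j$ a valid competitor for the infimum. What each buys: your route does not use Theorem 8.1 or the convergence of $\delta$--geodesics to weak geodesics, so it is more self--contained relative to Sections 7--8 and closer in spirit to ``compare against arbitrary competitors''; the paper's route avoids importing the piecewise--linear approximation of Lemma 8.2 into this lemma and exhibits the (near--)optimal boundary curves explicitly as $\delta$--geodesics. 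Both are sound.
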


\begin{proof}
Let $a\le\alpha<\beta\le b$. Suppose $U:[0,S]\times [\alpha,\beta]\to \cH$ is a smooth map such that $U(s,\cdot)$ is an $\varepsilon$--geodesic for all $s$, and $U(0,\cdot)=u$, $U(S,\cdot)=v$. Hence $\xi^s=\partial_s U(s,\cdot)$ is an $\varepsilon$--Jacobi field, $0\le s\le S$, and by Theorem 6.1 $L\circ\xi^s$ is convex. Therefore, with $0\le\lambda\le 1$ and $t_\lambda=(1-\lambda)\alpha+\lambda\beta$

\begin{equation}   
\cL(U(\cdot,t_\lambda))=\int^S_0 L(\xi^s(t_\lambda))\,ds\le (1-\lambda)\int^S_0 L(\xi^s(\alpha))\,ds +\lambda\int^S_0 L(\xi^s(\beta))\,ds.
\end{equation}
Fix $\delta>0$. Given $u,v$, we can choose $U$ (uniquely) so that both $w^\delta_\alpha=U(\cdot,\alpha)$ and $w^\delta_\beta=U(\cdot,\beta)$ are $\delta$--geodesics. From (9.1)
\begin{equation}   
\cL_S(u(t_\lambda), v(t_\lambda))\le\cL(U(\cdot,t_\lambda))\le 
(1-\lambda)\int_0^S L\circ\dot w^\delta_\alpha +\lambda\int_0^S L\circ\dot w^\delta_\beta.
\end{equation}
Now $\lim_{\delta\to 0} w^\delta_\alpha=w_\alpha$ and $\lim_{\delta\to 0} w^\delta_\beta=w_\beta$ are the weak geodesics in $\cH^{1\bar 1}$ connecting $u(\alpha), v(\alpha)$, respectively, $u(\beta), v(\beta)$; and, as explained in section 4,
\[
w^\delta_\alpha\to w_\alpha,\quad w^\delta_\beta\to w_\beta,\quad \dot w^\delta_\alpha\to \dot w_\alpha,\quad \dot w^\delta_\beta\to \dot w_\beta
\]
uniformly as $\delta\to 0$. Thus by Lemma 3.4 and Theorem 8.1
\[
\lim_{\delta\to 0} \int^S_0 L\circ\dot w^\delta_\alpha=\int^S_0 L\circ\dot w_\alpha=\cL_S(u(\alpha), v(\alpha)),
\]
and similarly for the other integral in (9.2). Hence letting $\delta\to 0$ in (9.2) gives
\[\cL_S(u(t_\lambda), v(t_\lambda))\le(1-\lambda)\cL_S(u(\alpha), v(\alpha))+\lambda \cL_S(u(\beta), v(\beta)),\]
what was to be proved. 
\end{proof}

\begin{lem}    
If  $v:[a,b]\to B(X)\cap \psho$ is a weak geodesic, then
\[
||v(s)-v(t)||\le \frac{||v(b)-v(a)||}{b-a}|s-t|,\qquad s,t\in[a,b].
\]  
\end{lem}
\begin{proof} This is not new. Let $M=||v(b)-v(a)||$. As $v(\cdot)(x)$ is convex, $\dot v(a)\le(v(b)-v(a))/(b-a)\le M/(b-a)$. Furthermore, 
$u(t)=v(a)-M(t-a)/ (b-a)$ 
is a subgeodesic, $u(a)=v(a)$, $u(b)\le v(b)$. Hence $u(t)\le v(t)$ for all $t$, and
\[
\dot v(a)\ge \lim_{t\to a}\frac{u(t)-v(a)}{t-a}\ge-\frac{M}{b-a}.
\]
Arguing similarly at $b$ we find $||\dot v(a)||,||\dot v(b)||\le M/(b-a)$, and the claim follows, again since $v(\cdot)(x)$ is convex for $x\in X$.
\end{proof}
\begin{lem} 
If $w,w'\in B(X)\cap \psho$ and $T>0$, then $\cL_T(w,w')$ is finite. If $w_j, w'_j\in C(X)\cap \psho$ 
decrease, or converge uniformly, to $w$, resp. $w'$, then
\begin{equation}   
\cL_T(w_j, w'_j)\to \cL_T(w,w') \quad\text{as } j\to\infty.
\end{equation}
\end{lem}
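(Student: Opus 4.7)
The upper bound $\cL_T(w,w')<\infty$ is immediate from the linear path $u(t)=(1-t/T)w+(t/T)w'$, which maps $[0,T]$ into $B(X)\cap\psho$ by convexity of $\psho$ and is $C^\infty$ into $B(X)$ with constant $\dot u\equiv(w'-w)/T$; Lemma~3.3 then bounds $L\circ\dot u$ uniformly. The substantive task is the lower bound $\cL_T(w,w')>-\infty$, which I would obtain by exhibiting a \emph{constant} affine minorant
\[
L(\xi)\ge L(0)+c\int_X\xi\,d\mu_0\qquad\text{for all }\xi\in T_0\cH
\]
and some $c\in\bR$. To produce this, take any $f\in\partial L(0)$ (nonempty by Hahn--Banach applied to the convex continuous $L$, and inside $L^1(X,\mu_0)$ by strong continuity). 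Invariance of $L$ under symplectomorphisms makes $\partial L(0)$ stable under $f\mapsto f\circ\varphi$, and Katok's density of symplectic rearrangements among measure-preserving ones (cf.~\cite[Lemma~3.2]{L4}) shows every function equimeasurable with $f$ also lies in $\partial L(0)$. By the Hardy--Littlewood--P\'olya majorization theorem the constant $\bar f=V^{-1}\int_Xf\,d\mu_0$, with $V=\int_X\omega^n$, lies in the closed convex hull of these rearrangements, hence in $\partial L(0)$, so $c=\bar f$ works. Strict rearrangement invariance of the extension (Theorem~3.2) propagates this bound to every fiber $T^\infty_u\cE(\omega)$. Integrating along any piecewise-$C^1$ path $u:[0,T]\to B(X)\cap\psho$ from $w$ to $w'$ and invoking the first variation of the Monge--Amp\`ere energy, $\int_0^T\!\int_X\dot u\,d\mu_{u(t)}\,dt=V\bigl(E(w')-E(w)\bigr)$, yields the path-independent lower bound $\cL(u)\ge TL(0)+cV\bigl(E(w')-E(w)\bigr)$.

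\paragraph{Continuity.} I prove both semicontinuities by endpoint-perturbation. For $\limsup\cL_T(w_j,w'_j)\le\cL_T(w,w')$ in the \emph{uniform convergence} case, pick a near-optimal path $u$ from $w$ to $w'$ with $\cL(u)\le\cL_T(w,w')+\epsilon$, and build $u^{(j)}:[0,T]\to B(X)\cap\psho$ from $w_j$ to $w'_j$ by inserting short linear segments of duration $\delta_j$ at each end (from $w_j$ to $w$ and from $w'$ to $w'_j$) and reparametrizing $u$ onto $[\delta_j,T-\delta_j]$. With $\delta_j=\sqrt{\|w_j-w\|+\|w'_j-w'\|}\to0$, Lemma~3.3 forces the action on each linear segment to be $o(1)$, while strong continuity of $L$ makes the middle-segment action converge to $\cL(u)$. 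The reverse inequality $\liminf\cL_T(w_j,w'_j)\ge\cL_T(w,w')$ is obtained by the symmetric construction, starting from near-optimal paths between $w_j,w'_j$ and bridging to $w,w'$. In the \emph{decreasing} case $\|w_j-w\|$ need not vanish (since $w$ may be discontinuous), so linear interpolation fails; I would instead route through the weak-geodesic bridge. Demailly-regularize $w_j,w'_j$ from above by $\tilde w_{j,k},\tilde w'_{j,k}\in\cH$; Chen's weak geodesic $\tilde v_{j,k}\in\cH^{1\bar1}$ connecting them is $C^1$ and satisfies $\cL(\tilde v_{j,k})=\cL_T(\tilde w_{j,k},\tilde w'_{j,k})$ by Theorem~8.1. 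Lemma~8.3 identifies the limits of $\tilde v_{j,k}$ (in $k$, then in $j$) with the weak geodesics joining $w_j,w'_j$ and then $w,w'$, Lemma~8.4 together with Lemma~3.3 provides pointwise and uniformly bounded convergence of $L\circ\dot{\tilde v}_{j,k}$, and a diagonal argument combined with dominated convergence delivers $\cL_T(w_j,w'_j)\to\cL_T(w,w')$.

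\paragraph{Main obstacle.} The decreasing case of the continuity is the most delicate point: absent uniform convergence of $w_j\to w$, the direct path-perturbation argument breaks down, and one must route through weak geodesics, leveraging Lemmas~8.3--8.4 and Chen's regularity for weak geodesics with endpoints in $\cH$. The finiteness lower bound, by contrast, reduces once strict rearrangement invariance and the Monge--Amp\`ere energy are in play to a fairly standard Hardy--Littlewood--P\'olya averaging argument.
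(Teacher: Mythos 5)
Your finiteness argument is correct but genuinely different from the paper's. You produce a path-independent affine minorant $L(\xi)\ge a+\bar f\int_X\xi\,d\mu_u$ by averaging a supporting functional from the rearrangement-invariant family of Theorem 2.4 (Hardy--Littlewood--P\'olya/Chebyshev), and then integrate using the first variation of the Monge--Amp\`ere energy; the paper instead deduces $\cL_T(w,w')>-\infty$ from the bound $\|\dot v_j(0)\|\le (M-m)/T$ on the weak geodesics joining $\cH$-regularizations of the endpoints, together with the identity $\cL_T(w_j,w'_j)=TL(\dot v_j(0))$ and the $\limsup$ inequality. Your route is more self-contained (it does not need the principle of least action), at the modest cost of invoking the first-variation formula $\frac{d}{dt}E(u(t))=\frac1V\int_X\dot u\,d\mu_{u(t)}$ for $C^1$ paths in $B(X)\cap\psho$, which is standard but not recorded in the paper; also, the claim that a Hahn--Banach subgradient in $B(X)^*$ lies in $L^1(\mu_0)$ is not automatic --- you should instead take any single $(a,f)\in\cA_0$ from Theorem 2.4, which already gives a bounded $f$. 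Your uniform-convergence argument by short linear bridges is essentially sound (mind that the reparametrization error on the middle segment involves the possibly large velocity bound of the near-optimal competitor, so the order of choices of $\varepsilon$, the competitor, and $\delta_j$ matters); the paper instead reduces the uniform case to the decreasing case by the translation trick $\cL_T(w_j+c,w'_j+c)=\cL_T(w_j,w'_j)$.

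The decreasing case, however, has a genuine gap. Your chain runs through $\cL_T(\tilde w_{j,k},\tilde w'_{j,k})=\cL(\tilde v_{j,k})$ and then passes to limits of the geodesic actions via Lemmas 8.3--8.4. But this only yields convergence of $\int_0^TL\circ\dot{\tilde v}_{j,k}$ to $\int_0^TL\circ\dot v_j$ and $\int_0^TL\circ\dot v$, where $v_j$ and $v$ are the weak geodesics joining $w_j,w'_j$ and $w,w'$; it does not identify these limits with the least actions $\cL_T(w_j,w'_j)$ and $\cL_T(w,w')$. That identification is exactly Theorem 8.1, which requires the weak geodesic to be $C^1$ --- known only for endpoints in $\cH$ (Chen), not for endpoints in $C(X)\cap\psho$ or $B(X)\cap\psho$. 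Reading your step ``$\lim_k\cL_T(\tilde w_{j,k},\tilde w'_{j,k})=\cL_T(w_j,w'_j)$'' instead as an appeal to the lemma itself makes the argument circular. Concretely, two ingredients are missing. For $\limsup_j\cL_T(w_j,w'_j)\le\cL_T(w,w')$ one must approximate an \emph{arbitrary} near-optimal competitor from $w$ to $w'$ by paths from $w_j$ to $w'_j$; the paper does this with Lemma 8.2 (regularizing the whole path from above), and the weak-geodesic route gives nothing here. For $\liminf_j\cL_T(w_j,w'_j)\ge\cL_T(w,w')$ the paper bridges from $w$ to $w_j$, follows the reparametrized weak geodesic $v_j$ (whose action \emph{equals} $\cL_T(w_j,w'_j)$ precisely because $w_j,w'_j\in\cH$), and bridges back to $w'$; only afterwards does it bootstrap to $C(X)\cap\psho$ endpoints by a recursive sandwich $w_j<z_j\in\cH$ with $|\cL_T(z_j,z'_j)-\cL_T(w_j,w'_j)|<1/j$. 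Your diagonal argument needs to be restructured along these lines to avoid presupposing the conclusion.
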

We do not know if (9.3) holds when $w_j,w_j'\in B(X)\cap\psho$.
\begin{proof}
We will prove (9.3) for decreasing sequences $w_j,w_j'$; the case of uniformly convergent sequences can be reduced to 
decreasing sequences in a standard way, as in Lemma 3.4.

Invariance implies that $L$ is constant on the zero section of $T^\infty\cE(\omega)$. Since adding a constant to $L$ will not affect the validity of the lemma, we will assume $L$ vanishes on the zero section. Let us start with (9.3). It suffices to prove
it along a subsequence $j=j_k$.

Assume first that $w_j, w'_j\in\cH$. Let $u:[0, T]\to B(X)\cap\psho$ be piecewise $C^1$ connecting $w$ and $w'$. At the 
price of passing to a subsequence, by 
Lemma 8.2 there are $u_j:[0,T]\to\cH$ piecewise $C^1$ such that $u_j(0)=w_j$, $u_j(T)=w'_j$, and 
$\int_0^TL\circ\dot u_j\to\int_0^TL\circ\dot u$. Therefore
\[
\cL(u)=\lim_{j\to\infty} \cL(u_j)\ge\limsup_{j\to \infty}\cL_T(w_j, w'_j).
\]
Passing to the infimum over paths $u$ connecting $w,w'$, 
\begin{equation}    
\cL_T(w,w')\ge\limsup_{j\to\infty} \cL_T(w_j, w'_j).
\end{equation}
Let $v_j:[0,T]\to\cH^{1\bar 1}$ be the weak geodesics connecting $w_j$ and $w'_j$.

We take a pause in the proof of (9.3) and show how (9.4) implies $\cL_T(w,w')>-\infty$. By Lemma 9.3
 $\|\dot v_j(0)\|$ is a bounded sequence. Since $L$ is equi--Lipschitz on bounded subsets of the fibers (Lemma 3.3), using Lemma 7.3 as well, $\cL_T(w_j,w'_j)=\cL(v_j)=TL(\dot v_j(0))$ is a bounded sequence, and (9.4) implies $\cL_T(w,w')>-\infty$.

We return to the proof of (9.3); we need to estimate $\cL_T(w,w')$ from above. For fixed $\delta>0$ there are infinitely many $k$ with
\begin{equation}   
\liminf_{j\to\infty} \cL_T(w_j,w'_j)\ge\cL_T(w_k, w'_k)-\delta= TL(\dot v_k(0))-\delta.
\end{equation}
If $0<\varepsilon<T/2$, define
\[
v^\varepsilon_k(t)= \begin{cases} tw_k/\varepsilon+(\varepsilon-t)w/\varepsilon & \text{if $0\le t <\varepsilon$}\\
\displaystyle
v_k\Big(t-\varepsilon\frac{T-2t}{T-2\varepsilon}\Big) & \text{if $\varepsilon\le t <T-\varepsilon$}\\
(T-t)w'_k/\varepsilon +(t+\varepsilon-T)w'/\varepsilon & \text{if $T- \varepsilon\le t \le T$.}\end{cases} 
\]
The piecewise $C^1$ paths $v^\varepsilon_k:[0, T]\to B(X)\cap\psho$ connect $w$ and $w'$, hence
\begin{equation}    
\cL_T(w,w')\le \cL(v^\varepsilon_k)=\Big(\int^\varepsilon_0+\int^{T-\varepsilon}_\varepsilon+
\int^T_{T-\varepsilon}\Big) L\circ\dot v^\varepsilon_k.
\end{equation}
The middle integral on the right is
\[
\int^{T-\varepsilon}_\varepsilon L\circ \dot v^\varepsilon_k=(T-2\varepsilon)L\Big(\frac{T\dot v_k(0)}{T-2\varepsilon}\Big).
\]
As we saw, the $\dot v_k(0)$ are uniformly bounded. By the
equi--Lipschitz property of $L$ an $\varepsilon\in (0,1)$ can be chosen so that for all $k$
\begin{equation}     
\int^{T-\varepsilon}_\varepsilon L\circ \dot v^\varepsilon_k\le TL (\dot v_k(0))+\delta.
\end{equation}

When $0\le t\le\varepsilon$, we have $\dot v^\varepsilon_k(t)=(w_k-w)/\varepsilon\in T^\infty_{v^\varepsilon_k (t)}\cE(\omega)$. Again by the equi--Lipschitz property, if $k$ is sufficiently large, $|L(\dot v^\varepsilon_k(t))|< \delta$; and similarly for $T-\varepsilon\le t< T$. Putting this and (9.6), (9.7) together,
\[\cL_T(w,w')\le 3\delta+TL(\dot v_k(0))\]
if $k$ is sufficiently large. Choosing $k$ that also satisfies (9.5) therefore yields
\[\cL_T(w,w')\le 4\delta+\liminf_{j\to\infty} \cL_T(w_j, w'_j).\]
This being true for all $\delta >0$, (9.3) follows in view of (9.4).

So far we dealt with $w_j, w'_j\in\cH$. If $w_j, w'_j\in C(X)\cap\psho$ only, upon adding constants to them we can
arrange that $w_j<w_{j-1}$ and $w_j'<w'_{j-1}$ everywhere. We will express this by saying that $w_j, w_j'$ strictly 
decrease. We construct recursively $z_j>w_j$, $z'_j>w'_j$ in $\cH$ that strictly decrease to $w,w'$ and satisfy 
$|\cL_T(z_j,z_j')-\cL_T(w_j,w_j')|<1/j$ as follows. Suppose we already have $z_{j-1},z'_{j-1}$.
Construct sequences $y_i<z_{j-1}$, $y'_i<z'_{j-1}$ ($i\in\bN$) in $\cH$ that decrease to $w_{j},w'_{j}$. By what we 
have already proved, $|\cL_T(y_i, y'_i)-\cL_T(w_j,w'_j)| < 1/j$ for some $i$, and we let $z_j=y_i$, 
$z'_i=y'_i$ with that $i$. Thus 
\[
\cL_T(w,w')=\lim_j\cL_T(z_j,z'_j)=\lim_j\cL_T(w_j,w'_j),
\]
as claimed. 
\end{proof}

\begin{proof}[Proof of Theorem 9.1]
Assume first that $u,v$ are weak geodesics in $\cH^{1\bar1}$ with endpoints in $\cH$, 
and connect $u(a),u(b)$, respectively, $v(a),v(b)$ by
$\var$--geodesics $u^\var, v^\var$. By Chen's theorem $u^\var\to u$ and $v^\var\to v$ uniformly as $\var\to 0$. Hence
by Lemma 9.4, $\cL_S(u^\var,v^\var)\to \cL_S(u,v)$, and so the latter, as the limit of convex functions (Lemma 9.2) is itself 
convex.

Second, consider general $u,v$. Choose $w_j, w'_j\in\cH$ decreasing to $u(a), u(b)$ and $z_j,z_j'\in\cH$ decreasing to
$v(a),v(b)$. Join $w_j, w'_j$ by weak geodesics $u_j:[a,b]\to\cH^{1\bar 1}$ and $z_j,z'_j$ by weak geodesics 
$v_j:[a,b]\to\cH^{1\bar1}$. By Lemma 8.3 $u_j,v_j$ decrease to $u,v$, hence by Lemma 9.4 the convex functions
$\cL_S(u_j,v_j)$ converge to $\cL_S(u,v)$. It follows that the latter is also convex.
\end{proof}

\section{Two ways to compute action}

One way is by the definition, $\int_a^bL\circ\dot u$, if $u:[a,b]\to B(X)\cap\psho$. The other corresponds to computing
length of a curve in a metric space as the least upper bound of the lengths of inscribed piecewise geodesic curves. The two 
agree in our setting as well.

The Lagrangian $L:T^\infty\cE(\omega)\to\bR$ in this section is strongly continuous, invariant, and convex.

\begin{thm}   
For a piecewise $C^1$ path $u:[a,b]\to B(X)\cap\psho$
\begin{equation}  
\int_a^b L\circ\dot u=\sup\sum_{i=1}^m\cL_{t_i-t_{i-1}}\big(u(t_{i-1}),u(t_i)\big),
\end{equation}
the sup over all partitions $a=t_0<t_1<\dots <t_m=b$.
\end{thm}
We start with an asymptotic formula for $\cL_T(w,z)$, valid as $w,z$ approach each other in two different ways. One will be 
needed in the proof of Theorem 10.1; the other for material in section 11.
\begin{lem}   
Consider a sequence of positive numbers $\tau_k\to 0$ and sequences $w_k,z_k\in B(X)\cap\psho$ converging uniformly
to $w\in B(X)\cap\psho$, $||z_k-w_k||=O(\tau_k)$. Let $\xi\in B(X)$. Suppose that either

(i) $(z_k-w_k)/\tau_k\to\xi$ uniformly or only in capacity; or

(ii) $(z_k-w_k)/\tau_k\to\xi$ $\mu_w$--almost everywhere, and the family of $\mu_{w_k},\mu_{z_k}$, $k\in\bN$, 
is hereditarily tight (Definition 3.5).

Viewing $\xi$ as a vector in $T^\infty_w\cE(\omega)$, we then have
\begin{equation}  
\cL_{\tau_k}(w_k,z_k)/\tau_k \to\ L(\xi) \qquad\text{as}\quad k\to \infty.
\end{equation}
\end{lem}
\begin{proof} 
It will suffice to prove (10.2) along a subsequence. Construct $w_k^j,z_k^j\in\cH$ that decrease
to $w_k$, respectively, $z_k$ as $j\to\infty$. Let $v_k^j:[0,\tau_k]\to\cH^{1\bar1}$ be the weak geodesic connecting 
$w_k^j, z_k^j$, and $u^j_k:[0,\tau_k]\to\cH$ the line segment connecting the two,
\[
u^j_k(t)=\frac{\tau_k-t}{\tau_k}w_k^j+\frac{t}{\tau_k}z_k^j. 
\]
Since $||z_k-w_k||=O(\tau_k)$ as $k\to \infty$, we can arrange that $(z_k^j-w_k^j)/\tau_k$ form a bounded set in $B(X)$. 
This implies by Lemma 9.3 that $\dot v_k^j(t)$ are uniformly bounded. 

We have $v_k^j(t)\le u_k^j(t)$, because evaluated at $x\in X$ the former is a convex function of $t$, the latter  is a 
linear function, and the two agree at $t=0,\tau_k$. Hence
\[
\dot v^j_k(0)\le\dot u^j_k(0), \qquad \dot u^j_k(\tau_k)\le\dot v^j_k(\tau_k), 
\]
and since the right, respectively, left derivatives $\dot v_k^j(0)$, $\dot v_k^j(\tau_k)$ 
are equidistributed, see \cite[Lemma 4.10]{D2},
\begin{equation}
\dot u^j_k(\tau_k)^\star\le\dot v_k^j(0)^\star\le\dot u_k^j(0)^\star.
\end{equation}
As $j\to\infty$, the decreasing sequences $w_k^j$, respectively, $z_k^j$, of $\omega$--plurisubharmonic 
functions converge in capacity  \cite[Proposition 9.11]{GZ2}, and so
\[
\dot u_k^j(0)=\dot u_k^j(\tau_k)=\frac{z_k^j-w_k^j}{\tau_k}\to \frac {z_k-w_k}{\tau_k},\qquad\text{as }j\to\infty,
\]
in capacity.
The latter function is quasicontinuous because $\omega$--plurisubharmonic functions are  \cite[Corollary 9.12]{GZ2}. It 
can be viewed as an element of  $T_{w_k}^\infty\cE(\omega)$ or of $T_{z_k}^\infty\cE(\omega)$. We will write
$f_k,g_k:[0,\mu_0(X)]\to\bR$ for its decreasing rearrangement  as an element of one or the other.
Lemma 3.4 then implies 
\[
\dot u_k^j(0)^\star\to f_k , \quad \dot u_k^j(\tau_k)^\star\to g_k,\qquad\text{as } j\to\infty,
\]
away from a countable subset of $[0,\mu_0(X)]$, and so by (10.3)
\begin{equation} 
g_k\le\liminf_{j\to\infty}\dot v_k^j(0)^\star\le
\limsup_{j\to\infty}\dot v_k^j(0)^\star\le f_k
 \end{equation}
away from a countable set.
In case (i) by Lemma 3.4, in case (ii) by Lemma 3.6 we obtain
\[
\lim_{k\to\infty}f_k=\lim_{k\to\infty}g_k= \xi^\star,
\]
away from a countable set. 

By Egorov's theorem for each $m\in\bN$ there is an $E_m\subset[0,\mu_0(X)]$, whose complement has Lebesgue
measure $<2^{-m}$, and on which the sequences $f_k, g_k$, and for every $k$ the sequences
\[
\inf_{j\ge i}\dot v_k^j(0)^\star,\qquad \sup_{j\ge i}\dot v_k^j(0)^\star,\qquad i=1,2,\dots
\]
converge uniformly. Upon passing to a subsequence we can arrange that for every $m$
\[
\xi^\star-1/m\le g_m\le f_m\le \xi^\star+1/m\qquad\text{on } E_m.
\]
In light of (10.4) for each $m$ there is $i_m$ such that whenever $j>i_m$,
\begin{equation} 
\xi^\star-2/m<\dot v_m^j(0)^\star< \xi^\star+2/m\qquad\text{on } E_m.
\end{equation}
Choose $j>i_m$ so that 
\begin{equation}  
\cL_{\tau_m}(w_m,z_m)/\tau_m\qquad\text{and}\qquad  
\cL_{\tau_m}(w^j_m,z^j_m)/\tau_m=L\big(\dot v^j_m(0)\big)
\end{equation}
are within $1/m$, and set $v_m=v^j_m$.

By the Borel--Cantelli lemma almost every point in $[0,\mu_0(X)]$ is contained in all but finitely many $E_m$.  (10.5)
therefore implies $\lim_m\dot v_m(0)^\star= \xi^\star$ a.e. With a measure preserving $\theta:(X,\mu_0)\to[0,\mu_0(X)]$, see
\cite[Lemma 5.5]{L4}, $\dot v_m(0)^\star\circ\theta\in B(X,\mu_0)$ then tends to $\xi^\star\circ\theta$ a.e., whence 
\[
L\big(\dot v_m(0)\big)=L\big(\dot v_m(0)^\star\circ\theta\big)\to L(\xi^\star\circ\theta)=L(\xi),\qquad m\to\infty,
\]
as $L$ is invariant and strongly continuous. But then
\[
\lim_{m\to\infty} \cL_{\tau_m}(w_m,z_m)/\tau_m=L(\xi)
\]
(cf. (10.6)), as needed.

\end{proof}
\begin{proof}[Proof of Theorem 10.1]
It suffices to prove when $u$ is $C^1$, not only piecewise. Since 
\begin{equation} 
\int_{t_{i-1}}^{t_i}L\circ\dot u\ge\cL_{t_i-t_{i-1}}(u(t_{i-1}),u(t_i)),
\end{equation} 
the left hand side of (10.1) is $\ge$ than the right hand side.
As to the converse, let $M=\max_{[a,b]}|L\circ \dot u|$. 

Given $\var>0$, choose $\delta>0$ so that for any partition $ a=t_0<t_1<\dots<t_m=b$ finer than $\delta$  any
corresponding Riemann sum satisfies
\begin{equation}  
\Big|\int_a^bL\circ\dot u-\sum_{i=1}^mL(\dot u(s_i))(t_i-t_{i-1})\Big|<\var.
\end{equation}
Here $t_{i-1}\le s_i\le t_i$.  It follows from Lemma 10.2 that for every $s\in(a,b)$ there is a $\delta_s\in(0,\delta)$ such that if 
$0<\tau<\delta_s$,
\[
\big|\cL_{\tau}(u(s-\tau/2),u(s+\tau/2))- L(\dot u(s))\tau\big|<\var\tau/(b-a).
\]
Vitali's covering theorem implies that there are a partition $ a=t_0<t_1<\dots<t_m=b$ finer than $\delta$, and 
$I\subset\{1,2,\dots, m\}$ with the following property. Let $s_i=(t_{i-1}+t_i)/2$. If $i\in I$ then $t_i-t_{i-1}<\delta_{s_i}$;
while the total length of the intervals $[t_{i-1},t_i]$ with  $1\le i\le m$ not in $I$ is $<\var/M$.  Write $t_i-t_{i-1}=\tau_i$.
By (10.7) $|\cL_{\tau_i}(u(t_{i-1}),u(t_i))|\le M\tau_i$. We have
\begin{multline*}
\int_a^bL\circ\dot u-\sum_{i=1}^m\cL_{\tau_i}\big(u(t_{i-1}),u(t_i)\big)\\ 
=\Big(\int_a^bL\circ\dot u-\sum_{i=1}^mL(\dot u(s_i))\tau_i\Big)+
\sum_{i=1}^m\big( L(\dot u(s_i))\tau_i-\cL_{\tau_i}(u(t_{i-1}),u(t_i))\big).
\end{multline*}
The first term on the right is $<\var$ according to (10.8). The second is
\[
\sum_{i\in I}+\sum_{i\notin I}\le\sum_{i\in I}\var\tau_i/(b-a)+\sum_{i\notin I}2M\tau_i<\var+2\var.
\]
All added up, $\int_a^bL\circ\dot u-\sum_{i=1}^m\cL_{t_i-t_{i-1}}\big(u(t_{i-1}),u(t_i)\big)< 4\var$, 
and the theorem follows.
\end{proof}

\section{Uniqueness of minimizing paths}   

In this section $L:T^\infty\cE(\omega)\to\bR$ will denote a strongly continuous, invariant, convex Lagrangian. The question 
we will entertain is whether weak geodesics are the unique minimizers of action $\int_a^bL\circ\dot u$ among paths 
connecting fixed $u(a),u(b)$. 

In complete generality uniqueness, of course, fails. For example, if $L$ is positively 
homogeneous, reparametrized weak geodesics will still minimize action. Uniqueness may also fail more drastically 
even with $L$ a Finsler metric. Here is an example. Start with a Lagrangian $\Lambda: T^\infty\cE(\omega)\to\bR$ that
vanishes on constants. For instance, denoting the average of $\xi\in T_u^\infty\cE(\omega)\approx B(X)$ with respect 
to the measure $\mu_u$ by $\langle\xi\rangle_u$,
\[
\Lambda(\xi)=\int_X\big|\xi-\langle\xi\rangle_u\big|\,d\mu_u,\qquad \xi\in T_u^\infty\cE(\omega),
\]
is a possibility. The path $v(t)=t$, $0\le t\le T$, is a geodesic in $\cH$, and in particular minimizes action of the
Lagrangian $L(\xi)=\max\big(\langle\xi\rangle_u,\Lambda(\xi)\big)$.

But take arbitrary nonconstant $w,z \in C^\infty(X)$ and with piecewise $C^1$ functions $f,g:[0,T]\to\bR$ let
\[
u(t)=t+f(t)w+g(t)z, \qquad 0\le t\le T.
\]
We choose $f,g$ so that ($u$ maps into $\cH$ and) $\langle \dot u(t)\rangle_{u(t)}=1$, i.e.,
\begin{multline}  
0=\int_X\big(\dot f(t)w+\dot g(t)z\big)\big(\omega+f(t)dd^c w+g(t) dd^c z\big)^n \\
=\dot f(t)P\big(f(t),g(t)\big)+\dot g(t)Q\big(f(t),g(t)\big),
\end{multline}
where $P,Q$ are polynomials determined by the choice of $w,z$.  
If $f,g$ satisfy 
\[
f(0)=g(0)=0,\qquad\dot f=Q(f,g),\quad \dot g=-P(f,g) \,\text{ on } [0, T/2],
\]
and $f(t)=f(T-t)$, $g(t)=g(T-t)$ for $T/2\le t\le T$, then (11.1) holds. When $T$ is small, this initial value problem
is solvable, and furnishes a path $u$ in $\cH$ connecting $0$ and $T$. Now suppose that $P(0,0)=\int_Xw\omega^n=0$ 
and $Q(0,0)=\int_Xz\omega^n$ is small but nonzero. Since, again for small $T$ and $t\in[0,T]$
\[
\Lambda(\dot u(t))<1=\langle\dot u(t)\rangle_{u(t)}, \qquad\text{and so}\qquad \int_0^TL\circ\dot u=T=\int_0^TL\circ\dot v,
\]
it follows that $u$ also minimizes action; and $u$ is not $v$ reparametrized.

Darvas's $L^1$ metric, $L(\xi)=\int_X|\xi|\,d\mu_u$ for $\xi\in T^\infty_u\cE(\omega)$, supplies another example. 
\cite[Proposition 3.43]{D4} and Theorem 10.1 imply that a $C^1$ path $u:[a,b]\to B(X)\cap \psho$ minimizes action whenever
it is monotone in the sense that $u(t)\le u(s)$ if $t\le s$.

If mere convexity of $L$ does not imply uniqueness of minimizers, strict versions of convexity do, at least in
$\cH^{1\bar1}$:

\begin{thm}  
Suppose $u:[a,b]\to \cH^{1\bar1}$ is piecewise $C^1$ as a map into $B(X)$,  $v:[a,b]\to \cH^{1\bar1}$ is a
weak geodesic
connecting $u(a)$ and $u(b)$, and $\int_a^b L\circ\dot u= \cL_{b-a}(u(a),u(b))$. If $L$ is strictly convex in the sense
that for all $w\in \cH^{1\bar1}$,
$\xi,\eta\in T_w^\infty\cE(\omega)$
\begin{equation}   
L\Big(\frac{\xi+\eta}2\Big)=\frac {L(\xi)+L(\eta)}2\quad\text{implies } \xi=\eta\,\,\mu_w \text{--almost everywhere},
\end{equation}
then $u=v$. If, instead, $L$ satisfies the weaker condition
\begin{equation}   
L\Big(\frac{\xi+\eta}2\Big)=\frac {L(\xi)+L(\eta)}2\quad\text{implies }
\xi=\lambda(\xi+\eta)\,\,\mu_u \text{--almost everywhere }
\end{equation}
with some $\lambda\in[0,1]$, then $u=v\circ\varphi$, where $\varphi:[a,b]\to[a,b]$ is piecewise $C^1$.
\end{thm}

We have not written the minimization condition as $\int_a^bL\circ\dot u=\int_a^bL\circ\dot v$ because we need not
assume a priori that $v$ is $C^1$.
But we do not know if the theorem holds more  generally when $u,v$ map into $B(X)\cap\psho$.---It 
suffices to verify conditions (11.2), (11.3) for $w=0$ only, since Rohlin's theory of Lebesgue spaces 
\cite[Section 2, N\textsuperscript{os}\xspace 3, 4, 7]{R} and the fact that a general $\mu_w$ has no atoms yields a
measure preserving bijection between $(X,\mu_0)$ and $(X,\mu_w)$.

In the proof of the next lemma we will use  Mabuchi length, the Lagrangian $L=M$, 
\begin{equation}  
M(\xi)=\Big(\int_X\xi^2\,d\mu_w\Big)^{1/2},\qquad \xi\in T_w^\infty\cE(\omega).
\end{equation}
The corresponding action $\cM_T(w,w')$, Mabuchi distance, is independent of $T$. We will denote it $d(w,w')$.

\begin{lem}   
Consider a map $u:[a,b]\to\cH^{1\bar1}$. If for some $c\in(a,b)$ the restrictions $u|[a,c]$, $u|[c,b]$ are weak geodesics,
and the left and right derivatives $\partial^-u(c)$, $\partial^+u(c)$, computed pointwise on $X$, agree 
$\mu_{u(c)}$--almost everywhere, then $u$ is a weak geodesic.
\end{lem}
\begin{proof}
It follows from He's work \cite{He} that if $v:[\alpha,\beta]\to \cH^{1\bar1}$ is a weak geodesic, then  the family
$\mu_{v(t)}$, $t\in[\alpha,\beta]$, is hereditarily tight (Definition 3.5).
Indeed, He constructs smooth $v_k:[\alpha,\beta]\to\cH$ that,  as maps 
into $B(X)$, converge to $v$ uniformly  and have $dd^cv_k(t)$ uniformly bounded, $k\in\bN$, $t\in[\alpha,\beta]$, 
see \cite[Theorem 1.3, and proof of
Theorem 1.1]{He}. Therefore $d\mu_{v_k(t)}/d\mu_0$ are uniformly bounded, say, by $A$. Since $\mu_{v(t)}$ is
the weak limit of $\mu_{v_k(t)}$, e.g., by \cite[Theorem 3.18]{GZ2}, for all $\xi\in C(X)$
\[
\Big|\int_X\xi\, d\mu_{v(t)}\Big|=\lim_{k\to\infty}\Big|\int_X\xi\,d\mu_{v_k(t)}\Big|\le A\int_X|\xi|\,d\mu_0.
\]
Hence each $\mu_{v(t)}$ is absolutely continuous with respect to $\mu_0$, with Radon--Nikodym derivative 
$d\mu_{v(t)}/d\mu_0\le A$, and this implies hereditary  tightness.

In particular, the family $d\mu_{u(t)}$, $t\in [a,b]$, is hereditarily tight. 
By Theorem 9.1 $h(t)=d(u(c-t),u(c+t))$ is a convex function of small $t\ge 0$, it vanishes at $0$ and by Lemma 10.2
$h'(0)=2M(\partial^-u(c))=2M(\partial^+u(c))$, cf. (11.4).
Hence $h(t)\ge 2M(\partial^-u(c))t$. At the same time, by Lemma 7.3
\[
d\big(u(c-t),u(c)\big)=d\big(u(c),u(c+t)\big)=M\big(\partial^-u(c)\big)t,
\]
and so
\[
h(t)=d\big(u(c-t),u(c+t)\big)\ge d\big(u(c-t),u(c)\big)+d\big(u(c),u(c+t)\big).
\]
Thus $u|[c-t,c+t]$ is a shortest path for $d$ between $u(c\pm t)$; it is 
of constant speed, too. By \cite[Theorem 1, and the discussion 
following it]{D3} this means that $u|[c-t,c+t]$ is a weak geodesic.  Therefore $u$ itself is a weak geodesic. Indeed,
with the strip $S_{ab}=\{s\in\bC:a<\text{Re}s<b\}$ and projection $\pi:S_{ab}\times X\to X$, we need to check that
\[
U:S_{ab}\times X\ni(s,x)\mapsto u(\text{Re}s)(x)\in\bR
\]
is $\pi^*\omega$--plurisubharmonic and maximal in the sense that $(\pi^*\omega+dd^cU)^{n+1}=0$. Both hold 
because they hold on $S_{ac}\times X$, $S_{cb}\times X$, and on a neighborhood of $\{c+i\bR\}\times X$.
\end{proof}

The key to  Theorem 11.1 is the following characterization of constellations in which a triangle inequality degenerates:
\begin{lem}   
Suppose the Lagrangian $L$ satisfies (11.3), and $w,w',w''\in \cH^{1\bar1}$. If with some $S,T>0$
\[
\cL_S(w',w)+\cL_T(w,w'')=\cL_{S+T}(w',w''),
\]
then $w$ is on the weak geodesic $v:[0,S+T]\to\cH^{1\bar1}$ connecting $w',w''$. If $L$ is strictly convex (condition
(11.2)), then $w=v(S)$.
\end{lem}
\begin{proof} Construct a path $u:[0,S+T]\to B(X)\cap\psho$ whose restrictions $u|[0,S]$, $u|[S,S+T]$
are weak geodesics connecting $w',w$, respectively, $w,w''$. By \cite[Theorem 1.1]{He} $u$ in fact maps into
$\cH^{1\bar1}$. With small $\tau>0$
\begin{align*}
\cL_{S+T}&(w',w'')
\le\cL_{S-\tau}\big(w',u(S-\tau)\big)+\cL_{2\tau}\big(u(S-\tau),u(S+\tau)\big)+\cL_{T-\tau}\big(u(S+\tau), w''\big) \\
\le&\cL_{S-\tau}\big(w',u(S-\tau)\big)+\cL_{\tau}\big(u(S-\tau),w\big)+\cL_\tau\big(w,u(S+\tau)\big)+
     \cL_{T-\tau}\big(u(S+\tau), w''\big) \\
  \le&\cL_S(w',w)+\cL_T(w,w'')=\cL_{S+T}(w',w'').
\end{align*}

Hence $\cL_{2\tau}(u(S-\tau),u(S+\tau))=\cL_{\tau}(u(S-\tau),w)+\cL_\tau(w,u(S+\tau))$. We divide by $2\tau$ and compute the 
limits as 
$\tau\to 0$ using  Lemma 10.2. This is possible, since  the family $\mu_{u(t)}$, $t\in[a,b]$, is hereditarily tight by the
initial observation in the proof of Lemma 11.2. We obtain
\begin{equation}  
L\Big(\frac{\partial^-u(S)+\partial^+u(S)}2\Big)=\frac{L(\partial^-u(S))+L(\partial^+u(S))}2,
\end{equation}
If $L$ is strictly convex, (11.5) implies $\partial^-u(S)=\partial^+u(S)$ $\mu_w$--almost everywhere. By Lemma 11.2
therefore $u$ is a weak geodesic connecting $w',w''$, and so coincides with $v$.

If $L$ satisfies the weaker condition (11.3) only, we can still conclude 
$\partial^-u(S)=\lambda(\partial^-u(S)+\partial^+u(S))$ $\mu_w$--almost everywhere, $0\le\lambda\le1$. If one of 
$\partial^\pm u(S)$ is a.e. $0$, then $w$ is at $0$ Mabuchi distance to $w'$  or $w''$, hence coincides with one of them.
Otherwise $\lambda\neq 0,1$, and $u$ can be linearly reparametized on $[S,S+T]$ to a path $\tilde u$ that satisfies
$\partial^-\tilde u(S)=\partial^+\tilde u(S)$ $\mu_w$--almost everywhere. Lemma 11.2 again implies that $w$ is on
the geodesic $v$.
\end{proof}
\begin{proof}[Proof of Theorem 11.1]
Let $a<s<b, S=s-a$, and $T=b-s$. Then
\begin{multline*} 
\cL_{S+T}\big(u(a),u(b)\big)\le\cL_S\big(u(a),u(s)\big)+\cL_T\big(u(s),u(b)\big) \\
\le\int_a^s L\circ\dot u+\int_s^bL\circ\dot u 
=\int_a^bL\circ\dot u=\cL_{S+T}\big(u(a),u(b)\big),
\end{multline*}
and all inequalities here must be equalities. If $L$ is strictly convex, this implies via Lemma 11.3 that $u(s)=v(s)$, and so
$u=v$. 

If $L$ satisfies (11.3) only, Lemma 11.3 gives $u(s)=v(\varphi(s))$ with some function $\varphi:[a,b]\to[a,b]$.
If $v$ is constant, this again means $u=v$. Otherwise, on the one
hand, Mabuchi distance along the weak geodesic $v$ is $d(v(a),v(t))=c(t-a)$, with $c\neq 0$. On the other, Mabuchi 
distance along $u$ is a piecewise $C^1$ function of $s$
\[
d(u(a),u(s))=\int_a^s\Big(\int_X\dot u(t)^2\,d\mu_{u(t)}\Big)^{1/2}\,dt.
\]
It follows that  $d\big(v(\varphi(a)),v(\varphi(s))\big)=c(\varphi(s)-\varphi(a))$ is a piecewise $C^1$ function of $s$, and so is 
$\varphi(s)$.

\end{proof}

\end{document}